\newtheorem{theorem}{Theorem}[section]
\newtheorem*{theorem*}{Theorem}
\newtheorem{proposition}[theorem]{Proposition}
\newtheorem{lemma}[theorem]{Lemma}
\newtheorem{corollary}[theorem]{Corollary}
\theoremstyle{definition}
\newtheorem{example}[theorem]{Example}
\theoremstyle{remark}
\newtheorem{remark}[theorem]{Remark}
\DeclareMathOperator{\image}{im}
\DeclareMathOperator{\Hom}{Hom}
\DeclareMathOperator{\Ext}{Ext}
\DeclareMathOperator{\Tot}{Tot}
\DeclareMathOperator{\Ac}{Ac}
\DeclareMathOperator{\Cone}{Cone}
\DeclareMathOperator{\colim}{colim}
\DeclareMathOperator{\B}{B}
\DeclareMathOperator{\Z}{Z}
\DeclareMathOperator{\HH}{H}
\DeclareMathOperator{\Spec}{Spec}
\DeclareMathOperator{\Proj}{Proj}
\DeclareMathOperator{\proj}{proj}
\DeclareMathOperator{\inj}{inj}
\newcommand{\pr}{{\operatorname{pr}}}
\newcommand{\mfp}{{\mathfrak{p}}}
\renewcommand{\epsilon}{{\varepsilon}}
\renewcommand{\phi}{{\varphi}}
\newcommand{\define}[1]{{\textbf{#1}}}
\newcommand{\id}{{\operatorname{id}}}
\newcommand{\bZ}{{\mathbb{Z}}}
\newcommand{\bC}{{\mathbb{C}}}
\newcommand{\bN}{{\mathbb{N}}}
\DeclareMathOperator{\rad}{rad}
\DeclareMathOperator{\Qcoh}{Qcoh}
\DeclareMathOperator{\coh}{coh}
\DeclareMathOperator{\Mod}{Mod}
\let\mod\relax
\DeclareMathOperator{\mod}{mod}
\newcommand{\D}{{\operatorname{D}}}
\newcommand{\K}{{\operatorname{K}}}
\newcommand{\ol}[1]{{\overline{#1}}}
\newcommand{\abs}{{\mathrm{abs}}}
\newcommand{\ra}{\rightarrow}
\newcommand{\xra}[2][]{\xrightarrow[{#1}]{#2}}
\newcommand{\xla}[2][]{\xleftarrow[{#1}]{#2}}
\newcommand{\sira}{\xra{\sim}}
\newcommand{\xsira}[1]{\xrightarrow[\sim]{#1}}
\newcommand{\sra}{\twoheadrightarrow}
\newcommand{\hra}{\hookrightarrow}
\providecommand*{\twoheadrightarrowfill@}{%
  \arrowfill@\relbar\relbar\twoheadrightarrow
}
\providecommand*{\twoheadleftarrowfill@}{%
  \arrowfill@\twoheadleftarrow\relbar\relbar
}
\providecommand*{\xsra}[2][]{%
  \ext@arrow 0579\twoheadrightarrowfill@{#1}{#2}%
}
\providecommand*{\xsla}[2][]{%
  \ext@arrow 5097\twoheadleftarrowfill@{#1}{#2}%
}
\providecommand*{\hookrightarrowfill@}{%
  \arrowfill@\relbar\relbar\hookrightarrow
}
\providecommand*{\hookleftarrowfill@}{%
  \arrowfill@\hookleftarrow\relbar\relbar
}
\providecommand*{\xhra}[2][]{%
  \ext@arrow 0579\hookrightarrowfill@{#1}{#2}%
}
\providecommand*{\xhla}[2][]{%
  \ext@arrow 5097\hookleftarrowfill@{#1}{#2}%
}
\newcommand{\sptag}[1]{\href{http://stacks.math.columbia.edu/tag/#1}{#1}}
\numberwithin{equation}{section}
\title{Unbounded derived categories of small and big modules: Is
  the natural functor fully faithful?}
\author{Leonid Positselski}
\address{Institute of Mathematics of the Czech Academy of Sciences\\
  \v Zitn\'a~25, 115~67 Prague~1\\
  Czech Republic; and
  \newline\indent
  Laboratory of Algebra and Number Theory\\
  Institute for Information Transmission Problems\\
  Moscow 127051\\
  Russia} 
\email{positselski@math.cas.cz} 
\author{Olaf M.~Schn{\"u}rer}
\address{Institut f\"ur Mathematik\\ 
  Universit{\"a}t Paderborn\\
  Warburger Stra\ss{}e 100\\
  33098 Pa\-der\-born\\
  Germany
}
\email{olaf.schnuerer@math.uni-paderborn.de}
\begin{document}
\subjclass[2020]{Primary 18G80; 
  Secondary
  16E35, 
  16L60, 
  18G20, 
  18E10} 

\keywords{Unbounded derived category, quasi-Frobenius ring,
  locally noetherian Grothendieck category, absolute derived
  category.} 

\begin{abstract}
  Consider the obvious functor
  from the unbounded derived category of all finitely generated
  modules over a
  left
  noetherian ring $R$ to the unbounded derived
  category of all modules.  We answer the natural question
  whether this functor defines an equivalence onto the full
  subcategory of complexes with finitely generated cohomology
  modules in two special cases.  If $R$ is a quasi-Frobenius
  ring
  of infinite global dimension,
  then
  this functor is not full.  If $R$ has finite
  left
  global dimension,
  this functor is an equivalence.  We also prove variants of the
  latter assertion for left coherent rings, for noetherian
  schemes and for locally noetherian Grothendieck categories.
\end{abstract}

\maketitle


\section{Introduction}
\label{sec:introduction}

Let $R$ be a left noetherian ring. Then the inclusion
$\mod(R) \subset \Mod(R)$ of the category of all finitely
generated
left
$R$-modules into the category of
all
left
$R$-modules
is trivially fully faithful and exact. We ask when the induced
functor
\begin{equation*}
  \D(\mod(R)) \ra \D(\Mod(R))  
\end{equation*}
on unbounded derived categories is fully faithful as well and
when
its essential image is the subcategory
$\D_{\mod(R)}(\Mod(R)) \subset \D(\Mod(R))$ of complexes with
cohomology objects in $\mod(R)$. Put together, we study the
question when the induced functor
\begin{equation*}
  F \colon \D(\mod(R)) \ra \D_{\mod(R)}(\Mod(R))  
\end{equation*}
is an equivalence. We present a negative and two positive results.
Before explaining these results, we
would like to mention that the answer to the corresponding
question in the bounded above case is well-known: The induced
functor on bounded above derived categories is always an
equivalence $\D^-(\mod(R)) \sira \D^-_{\mod(R)}(\Mod(R))$
(special case of Proposition~\ref{p:D-minus-equivalence}).

The negative result is that $F$ is neither full nor essentially
surjective
(not
even up to direct summands) if $R$ is a
quasi-Frobenius ring (= a left noetherian, left self-injective
ring) of infinite global dimension
(Theorem~\ref{t:self-injective-artin-gdim-infinite}).
Quasi-Frobenius 
rings are always left and right noetherian and left and right
artinian.  The ring $k[\epsilon]/(\epsilon^2)$ of dual numbers
over a field $k$ and $\bZ/4\bZ$ are examples of commutative
quasi-Frobenius rings of infinite global dimension.

The first positive result is that $F$ is an equivalence whenever
$R$ has finite left global dimension
(Corollary~\ref{c:equiv-Dmod-DMod-gdim-finite}). Commutative
examples of such rings are polynomial rings $k[x_1, \dots, x_n]$
over a field $k$ and all regular quotients of such rings.  In
fact, Corollary~\ref{c:equiv-Dmod-DMod-gdim-finite} is the
slightly stronger statement that $F$ is an equivalence whenever
$R$ is a left coherent ring of finite left global dimension and
$\mod(R)$ denotes the category of all finitely presented
left
$R$-modules.

Of course there are many examples of left noetherian rings which
have infinite left global dimension and are not
left
self-injective,
even if one restricts attention to finite-dimensional algebras
over a field or to commutative noetherian rings.  It would be
nice to characterize, in terms of classical homological algebra,
all left noetherian (or coherent) rings $R$ for which $F$ is an
equivalence.

The second positive result is a generalization of the first
positive result to algebraic geometry.  If $X$ is a regular
noetherian scheme of finite Krull dimension, then the functor
induced by the inclusion $\coh(X) \subset \Qcoh(X)$ is an
equivalence
\begin{equation*}
  \D(\coh(X)) \sira \D_{\coh}(\Qcoh(X))
\end{equation*}
(Corollary~\ref{c:regular-noetherian-finite-dim}).

Similarly as above, it would be nice to characterize all
noetherian schemes $X$ for which the obvious functor
\begin{equation*}
  E \colon \D(\coh(X)) \ra \D_{\coh}(\Qcoh(X))
\end{equation*}
is an equivalence.  Again, the bounded above case is a classical
result: The functor $\D^-(\coh(X)) \sira \D^-_{\coh}(\Qcoh(X))$
is always an equivalence
(\cite[Expos\'e~II, Proposition~2.2.2,
p.~167]{berthelot-grothendieck-illusie-SGA-6},
\cite[\sptag{0FDA}]{stacks-project}).  Our negative result shows
that there are many affine schemes $X$, for example
$\Spec k[\epsilon]/(\epsilon^2)$ and $\Spec \bZ/4\bZ$, such that
$E$ is not an equivalence.\bigskip

Our two positive results are in fact consequences of abstract
categorical results. Assume that $\mathcal{A}$ is an abelian
category and that $\mathcal{U} \subset \mathcal{A}$ is an abelian
subcategory closed under extensions.  Then it is natural to ask
when
\begin{equation*}
  G \colon \D(\mathcal{U}) \ra \D_\mathcal{U}(\mathcal{A})
\end{equation*}
is an equivalence. If $\mathcal{A}$ has finite global dimension
and some technical conditions are satisfied, the answer to this
question is affirmative
(Theorems~\ref{t:equiv-abelian-subcat-gdim-finite} and
\ref{t:serre-subcat-gdim-finite};
Corollary~\ref{c:equiv-hereditary} in the hereditary case).

An important special case where $G$ is an equivalence is the case
that $\mathcal{A}$ is a locally noetherian Grothendieck category
of finite global dimension
and that $\mathcal{U} \subset \mathcal{A}$ is its full
subcategory of noetherian objects
(Corollary~\ref{c:locally-noetherian-category}). Let us say a few
words about the proof of this assertion.  The argument consists
of three steps.

Firstly, for any locally coherent Grothendieck category
$\mathcal{A}$ and its full subcategory
$\mathcal{U}\subset \mathcal{A}$ of coherent objects, the functor
$G$ is essentially surjective provided that $\mathcal{A}$ has
finite global dimension (special case of
Proposition~\ref{p:essentially-surjective}; note that
$\mathcal{U}$ has finite global dimension by Proposition~\ref{p:D-minus-equivalence}).

Secondly, for any abelian category $\mathcal{A}$, consider its
so-called absolute derived category $\D^\abs({\mathcal{A}})$.
Then, for any abelian category $\mathcal{A}$ of finite global
dimension, the canonical functor
$\D^\abs({\mathcal{A}})\ra \D(\mathcal{A})$ is an equivalence
(Remark~\ref{r:gdim-finite-acyclic=absolute-acyclic}).

Thirdly, for any locally noetherian Grothendieck category
$\mathcal A$, the obvious triangulated functor
\begin{equation*} 
  \D^\abs(\mathcal{U})\ra\D^\abs(\mathcal{A})
\end{equation*}
is fully faithful (special case of
Theorem~\ref{t:absolute-derived-fully-faithful}). Notice that
this holds for categories $\mathcal{A}$ of infinite global
dimension as well.  So, in this particular respect, absolute
derived categories are better behaved than conventional unbounded
derived categories.

If now $\mathcal{A}$ is a locally noetherian Grothendieck
category of finite global dimension, then its subcategory
$\mathcal{U}$ of noetherian objects has finite global dimension
as well and we deduce from the above three steps and
commutativity of the diagram
\begin{equation*}
  \xymatrix{
    {\D^\abs(\mathcal{U})} \ar[r] \ar[d] & 
    {\D^\abs(\mathcal{A})} \ar[d] \\
    {\D(\mathcal{U})} \ar[r] & 
    {\D(\mathcal{A})}
  }
\end{equation*}
that $G$ is an equivalence.

\subsection*{Acknowledgment}
\label{sec:acknowledgment}

We thank the organizers of the summer school \textit{Two weeks of
  silting} in Stuttgart 2019 where we obtained our first negative
result for the dual numbers.  The second author thanks Ciprian
Modoi for useful comments.
We thank the referee for reading our text carefully and for spotting
several small inaccuracies.
We thank Pierre Schapira for providing references.
The first author is supported by the
GA\v CR project 20-13778S and research plan RVO: 67985840.

\section{Notation and Preliminaries}
\label{sec:notat-prel}

Let $R$ be a left coherent ring.  The abelian category of all
left $R$-modules is denoted by $\Mod(R)$ and its full abelian
subcategory of all finitely presented left $R$-modules by
$\mod(R)$.  We just say module instead of left module in the
following.  The fully faithful, exact inclusion functor
$\mod(R) \ra \Mod(R)$ between abelian categories induces a
triangulated functor
\begin{equation*}
  F \colon \D(\mod(R)) \ra \D(\Mod(R))
\end{equation*}
between the corresponding unbounded derived categories.  This
functor certainly lands in $\D_{\mod(R)}(\Mod(R))$, the full
subcategory of $\D(\Mod(R))$ consisting of all objects $C$ with
$\HH^i(C) \in \mod(R)$ for all $i \in \bZ$.  Since $F$ obviously
commutes with cohomology, it reflects the zero object and hence,
being a triangulated functor, is conservative (= reflects
isomorphisms).  We remind the reader that $\D(\Mod(R))$ has all
coproducts and products and that these can be formed naively
because coproducts and products of quasi-isomorphisms
between complexes in $\Mod(R)$
are
quasi-isomorphisms. Note also that $\D(\Mod(R))$ is
idempotent-complete, and hence so is $\D_{\mod(R)}(\Mod(R))$.

In the case of a left noetherian ring $R$, it is well-known that
the functor induced by $F$ is an equivalence
\begin{equation}
  \label{eq:D-minus-mod-Mod}
  \D^-(\mod(R)) \sira \D_{\mod(R)}^-(\Mod(R))  
\end{equation}
of triangulated categories (e.\,g.\
\cite[\sptag{0FCL}]{stacks-project}, \cite[Proof of
Proposition~3.5]{huy-fourmukai}); here $\D^-$ indicates the vanishing
of all sufficiently high cohomologies.  This assertion holds more
generally for any left coherent ring $R$; this is a consequence
of Proposition~\ref{p:D-minus-equivalence} below.

Let now $\mathcal{A}$ be an abelian category and let
$\mathcal{U}\subset\mathcal{A}$ be a non-empty, full subcategory
closed under kernels, cokernels, and extensions.  Then
$\mathcal{U}$ is an abelian category as well, and the fully
faithful, exact inclusion functor $\mathcal{U}\ra\mathcal{A}$
induces a triangulated functor
\begin{equation*}
  \D(\mathcal{U})\ra\D(\mathcal{A})
\end{equation*}
between the unbounded derived categories.  Once again, this
functor
is conservative and
lands in
$\D_{\mathcal{U}}(\mathcal{A}) \subset\D(\mathcal{A})$, the full
subcategory of all complexes in $\mathcal{A}$ with cohomology
objects belonging to $\mathcal{U}$.
One would like to know when
$\D(\mathcal{U}) \ra \D_{\mathcal{U}}(\mathcal{A})$
is an
equivalence of categories.  The following result is 
well-known (e.\,g.\
\cite[Proposition~1.7.11]{kashiwara-schapira-sheaves-on-manifolds}
\cite[Theorem~13.2.8]{kashiwara-schapira-categories-and-sheaves});
we include its proof for convenience.

\begin{proposition}
  \label{p:D-minus-equivalence}
  Let $\mathcal{A}$ be an abelian category and let
  $\mathcal{U} \subset \mathcal{A}$ be a non-empty, full
  subcategory closed under kernels, cokernels and extensions.
  Assume that for any epimorphism $A\sra U$ in $\mathcal{A}$ with
  $U\in\mathcal{U}$ there exists a morphism $V\ra A$ in
  $\mathcal{A}$ with $V\in\mathcal{U}$ such that the composition
  $V\ra A\sra U$ is an epimorphism.  Then the obvious functor is
  an equivalence
  $\D^-(\mathcal{U}) \sira \D^-_{\mathcal{U}}(\mathcal{A})$.
\end{proposition}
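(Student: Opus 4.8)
The plan is to reduce the statement to a single \emph{resolution lemma}: under the stated hypotheses, every bounded-above complex $A^\bullet$ over $\mathcal{A}$ with $\HH^i(A^\bullet)\in\mathcal{U}$ for all $i$ admits a quasi-isomorphism $U^\bullet\to A^\bullet$ from a complex $U^\bullet\in\C^-(\mathcal{U})$. Granting this, essential surjectivity is immediate: a general object of $\D^-_{\mathcal{U}}(\mathcal{A})$ is isomorphic in $\D^-(\mathcal{A})$ to a bounded-above complex $A^\bullet$ with $\HH^i(A^\bullet)\in\mathcal{U}$ for all $i$ (apply a canonical truncation $\tau^{\le N}$ to any representative), and the lemma displays it as the image of some $U^\bullet\in\C^-(\mathcal{U})$. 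Full faithfulness will then follow from the usual calculus of fractions, the one extra ingredient being that $\mathcal{U}\subset\mathcal{A}$ is a \emph{full} subcategory.

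For full faithfulness, fix $U^\bullet,{U'}^\bullet\in\C^-(\mathcal{U})$. Any morphism between their images in $\D^-(\mathcal{A})$ is represented by a roof $U^\bullet\xla{s}A^\bullet\xra{g}{U'}^\bullet$ with $s$ a quasi-isomorphism and $A^\bullet$ bounded above. Applying the resolution lemma to $A^\bullet$ gives a quasi-isomorphism $r\colon V^\bullet\to A^\bullet$ with $V^\bullet\in\C^-(\mathcal{U})$; since a faithful exact functor reflects isomorphisms, $sr$ is still a quasi-isomorphism when viewed inside $\C^-(\mathcal{U})$, so the equivalent roof $U^\bullet\xla{sr}V^\bullet\xra{gr}{U'}^\bullet$ lies entirely in $\C^-(\mathcal{U})$ and provides a preimage of the given morphism. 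For faithfulness, a chain map $f\colon U^\bullet\to{U'}^\bullet$ that becomes zero in $\D^-(\mathcal{A})$ satisfies $fs\simeq 0$ for some quasi-isomorphism $s\colon A^\bullet\to U^\bullet$ from a bounded-above $A^\bullet$; resolving $A^\bullet$ as above and observing that the null-homotopy is a family of morphisms $V^i\to{U'}^{i-1}$, hence a null-homotopy \emph{in} $\C^-(\mathcal{U})$ by fullness, forces $f=0$ in $\D^-(\mathcal{U})$.

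It remains to prove the resolution lemma, which is the core of the argument. After a shift assume $A^i=0$ for $i>0$. We construct $U^i$ for $i\le 0$ and a chain map $\phi\colon U^\bullet\to A^\bullet$ by descending induction, maintaining at stage $n$: the restriction $\phi_n=\phi|_{U^{\ge -n}}\colon U^{\ge -n}\to A^\bullet$ of the part built so far has $\Cone(\phi_n)$ with cohomology in degrees $\le -n-1$, equivalently $\HH^i(\phi_n)$ is an isomorphism for $i>-n$ and an epimorphism for $i=-n$. The base case $n=0$ uses the covering hypothesis applied to the epimorphism $A^0\sra\HH^0(A^\bullet)$. For the inductive step, set $C=\Cone(\phi_n)$ and note that $\HH^{-n-1}(C)$ is an extension of $\HH^{-n-1}(A^\bullet)$ by $\kernel\big(\HH^{-n}(U^{\ge -n})\to\HH^{-n}(A^\bullet)\big)$; both terms lie in $\mathcal{U}$ --- the first by hypothesis, the second since $\HH^{-n}(U^{\ge -n})=\kernel(d_U^{-n})\in\mathcal{U}$ (as $\mathcal{U}$ is closed under kernels) and the displayed arrow is then a morphism of the full subcategory $\mathcal{U}$, whose kernel again lies in $\mathcal{U}$ --- so $\HH^{-n-1}(C)\in\mathcal{U}$ because $\mathcal{U}$ is closed under extensions. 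Now apply the covering hypothesis to the epimorphism $\kernel(d_C^{-n-1})\sra\HH^{-n-1}(C)$, obtaining $U^{-n-1}\in\mathcal{U}$ together with a morphism $U^{-n-1}\to\kernel(d_C^{-n-1})\subset U^{-n}\oplus A^{-n-1}$ that surjects onto $\HH^{-n-1}(C)$. Its two components give $d_U^{-n-1}\colon U^{-n-1}\to U^{-n}$ and $\phi^{-n-1}\colon U^{-n-1}\to A^{-n-1}$; landing in $\kernel(d_C^{-n-1})$ makes these into a chain map extension $\phi_{n+1}$, and a diagram chase --- using that $U^{-n-1}$ in particular surjects onto the quotient $\HH^{-n-1}(C)/\HH^{-n-1}(A^\bullet)\cong\kernel(\HH^{-n}(U^{\ge -n})\to\HH^{-n}(A^\bullet))$ --- shows $\Cone(\phi_{n+1})$ has cohomology in degrees $\le -n-2$. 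Since $A^\bullet$ is bounded above, $U^i$ stabilizes once $n\ge -i$, and the colimit over $n$ yields $U^\bullet\in\C^-(\mathcal{U})$ with a quasi-isomorphism $\phi\colon U^\bullet\to A^\bullet$.

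The main obstacle is the bookkeeping in this inductive step: one must pin down the exact object feeding the covering hypothesis and check it lies in $\mathcal{U}$ --- this is precisely where closure of $\mathcal{U}$ under kernels and extensions, and the fullness of $\mathcal{U}\subset\mathcal{A}$, are used --- and then verify that the new term drops the cohomological amplitude of the cone by one. Everything else --- manipulating roofs, replacing complexes by truncations, and forming the colimit in the bounded-above setting --- is routine.
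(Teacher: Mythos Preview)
Your proof is correct and follows essentially the same strategy as the paper's: both establish that every object of $\D^-_{\mathcal{U}}(\mathcal{A})$ admits a quasi-isomorphism from a complex in $\C^-(\mathcal{U})$, built by descending induction, and then deduce full faithfulness from the calculus of roofs. The difference is purely in the bookkeeping of the inductive step. The paper modifies the complex $A$ in place, replacing $A^N$ by $V\oplus U$ via an explicit fiber-product diagram and invoking the covering hypothesis twice per step (once to hit $\B^{N+1}(A)$, once to hit $\HH^N(A)$); closure under extensions enters only to ensure $V\oplus U\in\mathcal{U}$. You instead build a separate complex $U^\bullet$ with a map to $A^\bullet$ and track the cone, invoking the covering hypothesis once per step on the single target $\HH^{-n-1}(\Cone(\phi_n))$, which you first show lies in $\mathcal{U}$ via closure under kernels and extensions. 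Both packagings are standard; yours is slightly more abstract but economizes on the number of covering steps, while the paper's makes the degreewise replacement completely explicit.
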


\begin{remark}
  If we replace the assumption that the full subcategory
  $\mathcal{U}$ in Proposition~\ref{p:D-minus-equivalence} is
  closed under extensions by the assumption that it is closed
  under finite coproducts, it will automatically be closed under
  extensions.  Indeed, let $S \hra E \sra Q$ be a short exact
  sequence in $\mathcal{A}$ with $S, Q \in \mathcal{U}$.  By
  assumption, there is an object $F \in \mathcal{U}$ and a
  morphism $u \colon F \ra E$ such that $F \xra{u} E \sra Q$ is
  an epimorphism. We obtain a morphism
    \begin{equation*}
      \xymatrix{
        {0} \ar[r] &
        {S} \ar[r] &
        {E} \ar[r] &
        {Q} \ar[r] &
        {0}\\
        {0} \ar[r] &
        {S \times_E F} \ar[r] \ar[u] &
        {F} \ar[r] \ar[u]^-{u}&
        {Q} \ar[r] \ar[u]^-{\id} &
        {0}
      }
    \end{equation*}
    of short exact sequences. Note that $S \times_E F$ is in
    $\mathcal{U}$ as the kernel of $F \ra Q$. The left square is
    cartesian but also cocartesian. Hence $E$ is the cokernel of
    the morphism $S \times_E F \ra S \oplus F$ between objects of
    $\mathcal{U}$ and hence in $\mathcal{U}$.
\end{remark}

\begin{proof}
  Let $A$ be a complex in $\mathcal{A}$ with cohomologies
  $\HH^i(A) \in \mathcal{U}$, for all $i \in \bZ$.  Let
  $N \in \bZ$ and assume that $A^i \in \mathcal{U}$ for all
  integers $i > N$.  The cocycle object $\Z^{N+1}(A)$ is in
  $\mathcal{U}$ as the kernel of $A^{N+1} \ra A^{N+2}$.  The
  coboundary object $\B^{N+1}(A)$ is then in $\mathcal{U}$ as the
  kernel of $\Z^{N+1}(A) \ra \HH^{N+1}(A)$.  Therefore, by
  assumption, there is an object $U \in \mathcal{U}$ and a
  morphism $u \colon U \ra A^N$ such that the composition
  $U \xra{u} A^N \sra \B^{N+1}(A)$ is an epimorphism.  Similarly,
  there is an object $V \in \mathcal{U}$ and a morphism
  $v \colon V \ra \Z^N(A)$ such that the composition
  $V \xra{v} \Z^N(A) \sra \HH^N(A)$ is an epimorphism.  The
  diagram
  \begin{equation*}
    \xymatrix{
      {\dots} \ar[r] &
      {A^{N-2}} \ar[r]^-{d} &
      {A^{N-1}} \ar[r]^-{d} &
      {A^N} \ar[r]^-{d} &
      {A^{N+1}} \ar[r] &
      {\dots}\\
      {\dots} \ar[r] &
      {A^{N-2}} \ar[u]^-{\id} \ar[r]^-{\langle d, 0\rangle} &
      {A^{N-1} \times_{A^N} (V \oplus U)} \ar[u]^-{\pr_1}
      \ar[r]^-{\pr_2} & 
      {V \oplus U} \ar[u]^-{(v,u)} \ar[r]^-{(0,d \circ u)} &
      {A^{N+1}} \ar[u]^-{\id} \ar[r] &
      {\dots}\\
    }
  \end{equation*}
  is clearly commutative and its lower row is a complex in
  $\mathcal{A}$.  The obvious diagram chases together with the
  Freyd--Mitchell embedding theorem show that the vertical arrows
  define a quasi-isomorphism between the two rows. Note that the
  component $V \oplus U$ of the lower complex is in
  $\mathcal{U}$.

  Given any object $A \in \D^{-}_\mathcal{U}(\mathcal{A})$ there
  is an integer $N$ such that $\tau_{\leq N}A \ra A$ is a
  quasi-isomorphism
  where $\tau_{\leq N}A$ is the intelligent
  truncation. If we iterate the above construction we find
  a complex $L$ in $\mathcal{U}$ together with a
  quasi-isomorphism $L \ra \tau_{\leq N} A$.  This shows that for
  any object $A \in \D^{-}_\mathcal{U}(\mathcal{A})$ there is an
  object $L\in \D^{-}(\mathcal{U})$ together with a
  quasi-isomorphism $L \ra A$.  This gives essential surjectivity
  of our functor and also fully faithfulness by using the
  description of morphisms by roofs.
\end{proof}

\begin{remark}
  Assume that the morphisms $U \ra A^N$ and $V \ra \Z^N(A)$ in
  the above proof are monomorphisms, i.\,e.\ $U \subset A^N$ and
  $V \subset \Z^N(A)$. Then one can replace $V \oplus U$ in the
  above diagram by $V+U \subset A^N$ and, proceeding in this way,
  eventually obtain a quasi-isomorphic subcomplex of $A$ with
  terms in $\mathcal{U}$.
\end{remark}

The full subcategory of projective resp.\ injective objects in an
abelian category $\mathcal{A}$ is denoted below by
$\proj({\mathcal{A}})$ resp.\ $\inj(\mathcal{A})$.  We abbreviate
$\Proj(R):=\proj(\Mod(R))$ and $\proj(R):= \proj(\mod(R))$
and $\inj(R):=\inj(\mod(R))$.
Note that $\proj(R)=\Proj(R) \cap \mod(R)$.

We write $\K(\mathcal{A})$ for the homotopy category of an
additive category $\mathcal{A}$.  Let $\mathcal{A}$ be an abelian
category.  Then an object $P\in\K(\mathcal{A})$ is called
\define{h-projective} if $\Hom_{\K(\mathcal{A})}(P,A)=0$ for all
acyclic complexes $A\in\K(\mathcal{A})$.  For any h-projective
complex $P$ and any complex $K\in\K(\mathcal{A})$, the map
$\Hom_{\K(\mathcal{A})}(P,K)\ra\Hom_{\D(\mathcal{A})}(P,K)$
induced by the functor $\K(\mathcal{A})\ra\D(\mathcal{A})$ is an
isomorphism.  Similarly, an object $I\in\K(\mathcal{A})$ is said
to be \define{h-in\-jec\-tive} if $\Hom_{\K(\mathcal{A})}(A,I)=0$
for 
all acyclic complexes $A\in\K(\mathcal{A})$.  For any h-injective
complex $I$ and any complex $K\in\K(\mathcal{A})$, the map
$\Hom_{\K(\mathcal{A})}(K,I)\ra\Hom_{\D(\mathcal{A})}(K,I)$ is an
isomorphism.

\section{A negative result}
\label{sec:negative-result}

Recall that the following conditions on a ring $R$ are equivalent
(see \cite[Theorem~1.50]{nicholson-yousif-quasi-frobenius},
\cite[(15.9)~Theorem]{lam-lectures-modules-rings}):
\begin{itemize}
\item
  $R$ is quasi-Frobenius, i.\,e.\ left and right artinian and
  left and right self-injective;
\item
  $R$ is left or right artinian, and left or right
  self-injective; 
\item
  $R$ is left or right noetherian, and left or right
  self-injective;
\item an
  arbitrary
  $R$-module is projective if and only if it is
  injective. 
\end{itemize}
If $R$ is a quasi-Frobenius ring, any $R$-module has projective
dimension either $0$ or $\infty$; hence the global dimension of
$R$ is either $0$ or $\infty$ (note that left and right global
dimension of $R$ coincide by \cite[Corollary~5]{auslander-dimension-III}).

Examples of quasi-Frobenius rings are $\bZ/n\bZ$ for any natural
number $n \geq 1$ and $k[\epsilon]/(\epsilon^n)$ for any field
$k$ and any natural number $n \geq 1$.  We first proved the
following theorem for the ring $k[\epsilon]/(\epsilon^2)$ of dual
numbers and advise the reader to keep this example in mind during
a first reading of its proof.

\begin{theorem}
  \label{t:self-injective-artin-gdim-infinite}
  Let $R$ be a quasi-Frobenius ring of infinite global dimension.
  Then the functor
  \begin{equation*}
    F \colon \D(\mod(R)) \ra
    \D(\Mod(R))    
  \end{equation*}
  is not full and the closure of its essential image under direct
  summands in $\D(\Mod(R))$ is strictly contained in
  $\D_{\mod(R)}(\Mod(R))$.  In particular, the induced functor
  $\D(\mod(R)) \ra \D_{\mod(R)}(\Mod(R))$ is not an equivalence.
\end{theorem}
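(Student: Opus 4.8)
The plan is to compare $\Hom$-groups of cohomologically unbounded complexes in $\D(\mod(R))$ and $\D(\Mod(R))$: an infinite \emph{product} of copies of a nonzero $\Ext$-group will appear on the big side, while on the small side the same group stays ``small''. First the basic data and a reduction. Since $R$ has infinite global dimension it is not semisimple, so there is a finitely generated non-projective module $A$; write $\Omega$ for syzygy and put $M:=A\oplus\Omega A$. The minimal presentation $0\to\Omega A\to P_0\to A\to 0$ is non-split, whence $0\neq\Ext^1_R(A,\Omega A)\subseteq\Ext^1_R(M,M)$. All $\Omega^iM$, $i\in\bZ$, are finitely generated (noetherianity) and non-projective (quasi-Frobenius), and since finitely generated projectives are injective one gets dimension-shift isomorphisms identifying, for $q>n\geq 1$, the group $\Ext^1_R(\Omega^{q-1}M,\Omega^{q-n}M)$ with $\Ext^n_R(M,M)$. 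We use repeatedly the following: $F$ induces an isomorphism $\Hom_{\D(\mod(R))}(A',B')\xrightarrow{\sim}\Hom_{\D(\Mod(R))}(FA',FB')$ whenever $A'$ has cohomology bounded above --- resolve $A'$ by a bounded-above complex of finitely generated projectives, which is h-projective in $\K(\mod(R))$ and in $\K(\Mod(R))$, and note that chain maps and homotopies between fixed complexes of modules do not depend on the ambient abelian category --- and dually whenever $B'$ has cohomology bounded below, using that $\mod(R)$ has enough finitely generated injectives because $R$ is artinian.

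For non-fullness, put $X:=\bigoplus_{n\geq 0}M[-n]$ (the complex with $M$ in every non-negative degree and zero differential) and $Y:=\bigoplus_{m\geq 0}(\Omega^mM)[m]$ (with $\Omega^mM$ in degree $-m$ and zero differential); both are complexes of finitely generated modules. In $\D(\Mod(R))$ one has $FX=\coprod_{n\geq 0}M[-n]$, so $\Hom_{\D(\Mod(R))}(FX,FY)=\prod_{n\geq 0}\Hom_{\D(\Mod(R))}(M,FY[n])$, and computing each factor by applying $\Hom^\bullet_{\Mod(R)}(P_\bullet,-)$ to $FY[n]$, where $P_\bullet$ is the minimal projective resolution of $M$ (so $M$ is \emph{not} compact), gives
\begin{equation*}
  \Hom_{\D(\Mod(R))}(M,FY[n])\;\cong\;\prod_{q\geq n}\Ext^n_R(M,M)\qquad(n\geq 1);
\end{equation*}
hence $\Hom_{\D(\Mod(R))}(FX,FY)$ contains the infinite product $\prod_{q\geq 1}\Ext^1_R(M,M)$ of copies of the nonzero module $\Ext^1_R(M,M)$ and so is not of finite length. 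On the other hand $\Hom_{\D(\mod(R))}(X,Y)=\varinjlim_{Y\to Y'}\Hom_{\K(\mod(R))}(X,Y')$ over quasi-isomorphisms $Y\to Y'$ in $\K(\mod(R))$; since $X$ has zero differential a chain map $X\to Y'$ is a family of maps $M\to\Z^n(Y')$, and two are homotopic iff they differ by maps factoring through $\B^n(Y')$, so $\Hom_{\K(\mod(R))}(X,Y')=\prod_{n\geq 0}\cokernel\bigl(\Hom_R(M,\B^nY')\to\Hom_R(M,\Z^nY')\bigr)$; but $Y'$ is quasi-isomorphic to $Y$, which has $\HH^n(Y)=0$ for $n>0$, so $\Z^n(Y')=\B^n(Y')$ for $n>0$ and the $n$-th factor vanishes there, while the $n=0$ factor embeds into $\Hom_R(M,\HH^0Y)=\Hom_R(M,M)$. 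Thus $\Hom_{\D(\mod(R))}(X,Y)$ is a subquotient of the finite-length module $\Hom_R(M,M)$, so its image under $F$ cannot be all of $\Hom_{\D(\Mod(R))}(FX,FY)$, and $F$ is not full. I expect the main obstacle to be exactly this pair of computations --- that mapping a non-compact object into an infinite coproduct of shifts yields a genuine infinite product, while on the $\mod(R)$-side only cocycles modulo boundaries contribute.

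For the essential image, the preceding paragraph produces a morphism $\phi\colon FX\to FY$ not in the image of $F$; put $C:=\Cone(\phi)$. Since $FX$ has cohomology in degrees $\geq 0$ and $FY$ in degrees $\leq 0$, the cohomology long exact sequence of $FX\xrightarrow{\phi}FY\to C\to FX[1]$ shows that all $\HH^n(C)$ are finitely generated, so $C\in\D_{\mod(R)}(\Mod(R))$, and that $C$ is cohomologically unbounded in both directions. If $C\cong F(D)$ for some $D\in\D(\mod(R))$, transport the triangle to $FX\xrightarrow{\phi}FY\xrightarrow{u}FD\xrightarrow{w}FX[1]$; as $Y$ is bounded above, $u=F(\tilde u)$, and as $X[1]$ is bounded below, $w=F(\tilde w)$; since $wu=0$ and $F$ is faithful on $\Hom_{\D(\mod(R))}(Y,X[1])$, we get $\tilde w\tilde u=0$, so $\tilde w$ factors through the canonical map $p\colon D\to\tilde C:=\Cone(\tilde u)$. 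Applying $F$ identifies $F\tilde C$ with $\Cone(u)\cong FX[1]$, compatibly with the connecting map to $FY[1]$, which on $FX[1]$ equals $-\phi[1]$; in particular $\HH^n(\tilde C)\cong\HH^{n+1}(X)$, so $\tilde C$ is bounded below, hence the comparison isomorphism $\iota\colon FX[1]\xrightarrow{\sim}F\tilde C$ lies in the image of $F$, and $-\phi[1]=F(\tilde v)\circ\iota$ exhibits $\phi$ in the image of $F$ --- a contradiction. Running the same reasoning with an auxiliary summand absorbed through the octahedral axiom, and using idempotent-completeness of $\D_{\mod(R)}(\Mod(R))$, upgrades this to: $C$ is not a direct summand of any $F(D)$; so the closure of the essential image of $F$ under summands is strictly smaller than $\D_{\mod(R)}(\Mod(R))$, and in particular $F\colon\D(\mod(R))\to\D_{\mod(R)}(\Mod(R))$ is not an equivalence. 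The ``up to summands'' refinement is the second delicate point, the rest being bookkeeping with truncations and the reduction lemma.
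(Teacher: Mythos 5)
The decisive step of your non-fullness argument is the claim that $\Hom_{\D(\mod(R))}(X,Y)$ is a subquotient of $\Hom_R(M,M)$, and the computation behind it is wrong. For a chain map $f\colon X\ra Y'$ out of the zero-differential complex $X$ you correctly get components $f^n\colon M\ra \Z^n(Y')$, but $f$ is null-homotopic if and only if each $f^n$ lies in the image of $\Hom_R(M,Y'^{n-1})\ra\Hom_R(M,\Z^n(Y'))$, i.e.\ if and only if $f^n$ \emph{lifts} to $Y'^{n-1}$ --- not merely if it lands in $\B^n(Y')$. Since $M$ is not projective these conditions differ, and the factors you claim vanish for $n>0$ are exactly where Ext classes survive in the colimit over quasi-isomorphisms $Y\ra Y'$. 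In fact your conclusion is false, not just unproved: for each $m\ge0$ choose $0\ne\xi_m\in\Hom_{\D(\mod(R))}(\Sigma^{-1}M,\Sigma^{m}\Omega^{m}M)\cong\Ext^{1+m}_R(M,\Omega^{m}M)\cong\Ext^1_R(M,M)$ (dimension shifting, using that projectives are injective); the composites $X\xra{\pi_1}\Sigma^{-1}M\xra{\xi_m}\Sigma^{m}\Omega^{m}M\xra{\iota_m}Y$ are morphisms in $\D(\mod(R))$ (the projection $\pi_1$ and inclusion $\iota_m$ are chain maps because the differentials of $X$ and $Y$ vanish), and they are linearly independent, as one sees by composing with the chain-map splittings $\Sigma^{-1}M\ra X$ and $Y\ra\Sigma^{m_0}\Omega^{m_0}M$. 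So $\Hom_{\D(\mod(R))}(X,Y)$ has infinite length, your size comparison with $\Hom_{\D(\Mod(R))}(FX,FY)$ (whose computation via the degreewise product is essentially fine) yields no contradiction, and no non-liftable morphism $\phi$ is produced. The second half of your argument, the cone construction showing $\Cone(\phi)$ is not a summand of any $F(D)$, takes $\phi$ as input and collapses with it. Controlling morphism groups in $\D(\mod(R))$ between complexes unbounded in both cohomological directions is precisely the hard point here; the paper even leaves open whether $F$ is faithful.

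What is missing is a mechanism forcing morphisms coming from $\D(\mod(R))$ to be ``degreewise small''. The paper supplies one and argues object-first: it takes $M=\bigoplus_{n\ge1}B_n$, where the $B_n$ are brutal truncations of a \emph{minimal} projective (hence injective) resolution of a non-projective finitely generated module, so that $M$ is h-projective and $M'=\prod_{n\ge1}B_n$ is h-injective. If $M$ were a direct summand of some $F(N)$, the composite $M\ra F(N)\ra M'$ would be represented by an honest chain map $e$ with degreewise finitely generated image, homotopic to the canonical map $\mu\colon M\ra M'$; reducing modulo the Jacobson radical kills all differentials by minimality, forcing $\ol{e}^0=\ol{\mu}^0$, which is impossible since $\bigoplus\ol{P}_n\hra\prod\ol{P}_n$ has non-finitely generated image. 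Non-fullness is then deduced from this object-level statement via the truncation triangle of $M$ --- the converse of the direction you attempt. If you wish to keep your morphism-first outline, you would need an analogous finiteness constraint on representatives of morphisms $FX\ra FY$ in the image of $F$, which your present computation does not provide.
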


\begin{proof}
  Since $R$ is not semisimple, there is a finitely generated
  $R$-module $S \in \mod(R)$ that is not projective (for example
  a non-projective quotient of $R$ or a non-projective simple
  subquotient of $R$).  Since $R$ is an artinian ring,
  $S$ has
  a
  minimal projective resolution
  \begin{equation*}
    \dots \ra P_{n+1} \xra{d_n} P_n \ra
    \dots \ra P_1
    \xra{d_0} P_0 \sra S \ra 0.
  \end{equation*}
  This just means that the above sequence is exact, that all
  $P_n$ are in $\proj(R)$, and that $P_0 \sra S$ and all the
  induced maps $P_{n+1} \sra U_n:=\image(d_n)$ are projective
  covers, for $n \in \bN$.  Note also that all $P_n$
  are non-zero because $S$ 
  has infinite projective dimension.
  
  Consider the complexes 
  \begin{align*}
    B_1
    =
    (\dots
    \xra{\phantom{d}}
    0
    \xra{\phantom{d}}
    \makebox[0pt][l]{$0$}\phantom{P_6}
    \xra{\phantom{d}}
    \makebox[0pt][l]{$0$}\phantom{P_5}
    \xra{\phantom{d}}
    P_2
    \xra{d}
    &
      P_1
      \xra{d}
      \makebox[0pt][l]{$P_0$}\phantom{P_1}
      \xra{\phantom{d}}
      \makebox[0pt][l]{$0$}\phantom{P_1}
      \xra{\phantom{d}}
      \makebox[0pt][l]{$0$}\phantom{P_0}
      \xra{\phantom{d}}
      0
      \xra{\phantom{d}}
      \dots),\\
    B_2
    =
    (\dots
    \xra{\phantom{d}}
    0
    \xra{\phantom{d}}
    \makebox[0pt][l]{$0$}\phantom{P_6}
    \xra{\phantom{d}}
    P_4
    \xra{d}
    P_3
    \xra{d}
    &
      P_2
      \xra{d}
      P_1
      \xra{d}
      \makebox[0pt][l]{$P_0$}\phantom{P_1}
      \xra{\phantom{d}}
      \makebox[0pt][l]{$0$}\phantom{P_0}
      \xra{\phantom{d}}
      0
      \xra{\phantom{d}}
      \dots),\\
    B_3
    =
    (\dots
    \xra{\phantom{d}}
    0
    \xra{\phantom{d}}
    P_6
    \xra{d}
    P_5
    \xra{d}
    P_4
    \xra{d}
    &
      P_3
      \xra{d}
      P_2
      \xra{d}
      P_1
      \xra{d}
      P_0
      \xra{\phantom{d}}
      0
      \xra{\phantom{d}}
      \dots),\\
    \dots
    \phantom{=
    (\dots
    \xra{\phantom{d}}
    0
    \xra{\phantom{d}}
    P_6
    \xra{d}
    P_5
    \xra{d}
    P_4
    \xra{d}}
    &
  \end{align*}
  in $\proj(R)$ where $B_n$ is concentrated in degrees $[-n,n]$
  with $P_n$ in degree zero.  Let
  \begin{equation*}
    M:= \bigoplus_{n \geq 1} B_n
  \end{equation*}
  be their coproduct in the abelian category of complexes and
  also in $\D(\Mod(R))$.  The cohomology of $M$ is $0$ in degree
  zero, $S$ in each positive degree and $U_{2n}$ in degree $-n$,
  for each $n \geq 1$. Hence $M \in \D_{\mod(R)}(\Mod(R))$.  We
  claim that $M$ is not in the closure of the essential image of
  $F$ under direct summands in $\D(\Mod(R))$.

  Consider the canonical injective map 
  \begin{equation*}
    \mu \colon M =\bigoplus_{n \geq 1} B_n \ra
    M':=\prod_{n \geq 1} B_n 
  \end{equation*}
  between the coproduct and the product of the complexes $B_n$.
  The cohomology of $M'$ coincides with that of $M$, and $\mu$ is
  a quasi-isomorphism and becomes an isomorphism in
  $\D(\Mod(R))$.

  Since all projective $R$-modules are injective, all $B_n$ are
  bounded complexes of projective and injective $R$-modules and
  therefore are h-projective and h-injective (both in
  $\K(\mod(R))$ and in $\K(\Mod(R))$).  Since $M$ is also the
  coproduct of the $B_n$ in the homotopy category $\K(\Mod(R))$,
  it is h-projective.  Since $M'$ is also the product of the
  $B_n$ in the homotopy category $\K(\Mod(R))$, it is
  h-injective.
  
  Assume that there is an object $N \in \D(\mod(R))$ such that
  $M$ is a direct summand of $F(N)$ in $\D(\Mod(R))$.  Then there
  are morphisms $\iota \colon M \ra F(N)$ and
  $\pi \colon F(N) \ra M$ in $\D(\Mod(R))$ such that
  $\pi \circ \iota=\id_M$.  Then
  \begin{equation*}
    \mu = \mu \circ \pi \circ \iota \colon
    M \xra{\iota} F(N) \xra{\mu \circ \pi} M'.
  \end{equation*}
  Since $M$ is h-projective and $M'$ is h-injective, $\iota$ and
  $\mu \circ \pi$ can be represented by honest morphisms of
  complexes. The composition of representing morphisms is then a
  morphism
  \begin{equation*}
    e \colon M \ra M'
  \end{equation*}
  of complexes which becomes equal to $\mu$ in $\D(\Mod(R))$ and
  has the property that the image $e(M^p)=e^p(M^p)$ is finitely
  generated as an $R$-module, for each $p \in \bZ$.
  
  Since $M$ is h-projective (or since $M'$ is h-injective) there
  is a homotopy $h \colon M \ra M'$ (of degree one) between $e$
  and $\mu$.  In particular, we have
  \begin{equation}
    \label{eq:e-mu=dh}
    e^0 - \mu^0 = d_{M'}^{-1} \circ h^0 + h^1 \circ d_M^0
  \end{equation}
  as maps $M^0 \ra M'^0$.

  Let $\rad(R)$ be the Jacobson radical of $R$ and let $\rad(T)$
  be the radical of an $R$-module $T$.  Then $R/\rad(R)$ is a
  semisimple ring and $\rad(T)=\rad(R)T$ (see
  \cite[Proposition~15.16,
  Corollary~15.18]{anderson-fuller-rings}).
  We write $\ol{T} := T/\rad(T) = T/\rad(R) T$ for the quotient
  by the radical and $\ol{f} \colon \ol{T} \ra \ol{T}'$ for the
  morphism induced by an $R$-module morphism $f \colon T \ra T'$.
  With this notation, all the differentials
  $d_n \colon P_{n+1} \ra P_n$ of our minimal projective
  resolution induce zero morphisms
  \begin{equation*}
    \ol{d}_n=0 \colon \ol{P}_{n+1}
    \ra 
    \ol{P}_{n}
  \end{equation*}
  for each $n \in \bN$ because our projective covers become
  isomorphisms when passing to the respective quotients modulo
  the radicals (see
  \cite[Proposition~I.4.3]{ARS-rep-artin-algebras} for finitely
  generated modules or more generally
  \cite[27.13]{anderson-fuller-rings}).  The equalities
  \begin{equation*}
    \ol{M}^0
    = \frac{M^0}{\rad(M^0)}
    = \frac{M^0}{\rad(R)M^0}
    = \frac{\bigoplus_{n \geq 1} P_n}
    {\bigoplus_{n \geq 1} \rad(R)P_n}
    =\bigoplus_{n \geq 1} \ol{P}_n
  \end{equation*}
  and similarly for $\ol{M}^1$ imply
  $\ol{d}^0_M=\bigoplus_{n \geq 1} \ol{d}_{n-1}=0$.

  We have
  $\rad(R)M'^0= \rad(R) \prod_{n \geq 1} P_n =\prod_{n \geq 1}
  \rad(R)P_n$ because $\rad(R)$ is finitely generated from the
  right.
  Therefore we obtain equalities 
  \begin{equation*}
    \ol{M'}^0
    = \frac{M'^0}{\rad(M'^0)}
    = \frac{M'^0}{\rad(R)M'^0}
    = \frac{\prod_{n \geq 1} P_n}
    {\prod_{n \geq 1} \rad(R)P_n}
    =\prod_{n \geq 1} \ol{P}_n
  \end{equation*}
  and similarly for $\ol{M'}^{-1}$. We deduce
  $\ol{d}^{-1}_{M'}=\prod_{n \geq 1} \ol{d}_n=0$.

  Hence \eqref{eq:e-mu=dh} yields
  \begin{equation*}
    \ol{e}^0 = \ol{\mu}^0
  \end{equation*}
  as maps $\ol{M}^0 \ra \ol{M'}^0$. The image
  of $\ol{e}^0$ is finitely generated as an $R$-module because
  $e^0(M^0)$ has this property. But the map
  $\ol{\mu}^0 \colon \ol{M}^0 \ra \ol{M'}^0$ is, under the above
  identifications, the canonical injective map
  $\bigoplus_{n \geq 1} \ol{P}_n \hra \prod_{n \geq 1} \ol{P}_n$
  whose image is not finitely generated because all $P_n$ and
  hence all $\ol{P}_n$ are non-zero.
  This contradiction proves the claim that
  $M \in \D_{\mod(R)}(\Mod(R))$ is not in the closure of the
  essential image of $F$ under direct summands in $\D(\Mod(R))$.

  Let us deduce from this that $F$ is not full.  Consider the
  truncation triangle
  \begin{equation*}
    \tau_{\leq 0} M \ra M \ra \tau_{\geq 1}M \xra{\delta}
    \Sigma \tau_{\leq 0} M 
  \end{equation*}
  with respect to the standard t-structure.  Clearly,
  $\tau_{\geq 1} M \cong \bigoplus_{n \geq 1} \Sigma^{-n} S =:
  T_1$ and
  $\tau_{\leq 0} M \cong \bigoplus_{n \geq 1} \Sigma^{n} U_{2n}
  =: T_0$ are in the essential image of $F$.  Assuming that $F$
  is full, there is a morphism
  \begin{equation*}
    \delta' \colon T_1 \ra \Sigma T_0
  \end{equation*}
  in $\D(\mod(R))$ whose image $F(\delta')$ coincides with
  $\delta$ modulo the above isomorphisms. Complete $\delta'$ to a
  triangle $T_0 \ra E \ra T_1 \xra{\delta'} \Sigma T_0$. The
  image of this triangle under $F$ is isomorphic to the above
  truncation triangle.  In particular $F(E) \cong M$. This
  contradicts the fact proven above that $M$ is not in the
  essential image of $F$. Hence $F$ is not full.
\end{proof}

We do not know whether the functor in
Theorem~\ref{t:self-injective-artin-gdim-infinite} is faithful,
not even for $R=\bC[\epsilon]/(\epsilon^2)$.  We conclude this
section with several remarks, some of which may help to solve
this question.
\begin{enumerate}
\item
  \label{enum:Dplus-equi}
  Let $R$ be a left noetherian ring with the following property:
  \begin{enumerate}[label=($\star$)]
  \item
    \label{enum:mono-to-fg-quotient}
    If $N \hra M$ is an arbitrary monomorphism of $R$-modules
    with $N$ finitely generated, then there is
    a morphism
    $M \ra Q$
    such that $Q$ is finitely generated and such that the
    composition $N \hra M \ra Q$ is injective.
  \end{enumerate}
  Then the functor induced by $F$ is an equivalence
  \begin{equation}
    \label{eq:D-plus-mod-Mod}
    \D^+(\mod(R)) \sira \D^+_{\mod(R)}(\Mod(R)).
  \end{equation}
  To prove this, just dualize the proof of
  Proposition~\ref{p:D-minus-equivalence}.
\item
  \label{enum:qF-has-property}
  Any quasi-Frobenius ring $R$ satisfies
  property~\ref{enum:mono-to-fg-quotient}.
  Indeed, let $N \hra M$ be a monomorphism of $R$-modules with
  $N$ finitely generated.  Since $M$ has an injective hull and
  any injective $R$-module is projective, there is a monomorphism
  $M \hra \bigoplus_{i \in I} R$ for some index set $I$. Let
  $J \subset I$ be a finite set such that the image of
  $N \hra M \hra \bigoplus_{i \in I} R$ lands in
  $\bigoplus_{j \in J} R$. Define
  $U:= M \cap \bigoplus_{i \in I\setminus J} R$. Then
  $M \sra M/U$ does the job.
\item
  \label{enum:partial-ff}
  Let $R$ be a quasi-Frobenius ring (of infinite global dimension
  -- the case that $R$ is semisimple is boring).  Note that we
  have equivalences \eqref{eq:D-minus-mod-Mod} and
  \eqref{eq:D-plus-mod-Mod} by the previous items
  \ref{enum:Dplus-equi} and \ref{enum:qF-has-property}.
  Intelligent truncation and the fact that $F$ is t-exact with
  respect to the standard t-structures then shows that
  \begin{equation*}
    F
    \colon
    \Hom_{\D(\mod(R))}(X,Y)
    \ra 
    \Hom_{\D(\Mod(R))}(FX,FY)
  \end{equation*}
  is bijective in the following two cases:
  \begin{itemize}
  \item $X \in \D^-(\mod(R))$ and $Y \in \D(\mod(R))$;
  \item $X \in \D(\mod(R))$ and $Y \in \D^+(\mod(R))$.
  \end{itemize}
  For bijectivity in the first case
  it is enough to assume that $R$ is left coherent.   
\item
  \label{enum:factor-non-ff-witness}
  Let $R$ be a quasi-Frobenius ring.  If there is a non-zero
  morphism $f \colon X \ra Y$ in $\D(\mod(R))$ that becomes zero
  in $\D(\Mod(R))$ (i.\,e.\ if $F$ is not faithful), then, for
  all integers $m,n \in \bZ$, the morphism $f$ factors as
  \begin{equation*}
    X \ra \tau_{\geq m}X \xra{g} \tau_{\leq n} Y \ra Y
  \end{equation*}
  in $\D(\mod(R))$, where $g$ is some (obviously non-zero)
  morphism that becomes zero in $\D(\Mod(R))$.

  \begin{proof}
    Consider the intelligent truncation triangle
    \begin{equation*}
      \tau_{<m} X \xra{u} X \xra{v} \tau_{\geq m}X \xra{d}
      \Sigma \tau_{<m} X.
    \end{equation*}
    Applying the functor $\Hom(-,Y)$ to this triangle and
    $\Hom(-,FY)$ to its image under $F$ yields the commutative
    diagram
    \begin{equation*}
      \xymatrix{
        {(\tau_{<m} X, Y)} \ar[d]_-{F}^-{\sim} &
        \ar[l]_-{u^*} {(X,Y)} \ar[d]_-{F} &
        \ar[l]_-{v^*} {(\tau_{\geq m}X,Y)} \ar[d]_-{F} &
        \ar[l]_-{d^*}
        {(\Sigma \tau_{<m} X,Y)} \ar[d]_-{F}^-{\sim}\\
        {(F\tau_{<m} X, FY)} &
        \ar[l]_-{(Fu)^*} {(FX,FY)} &
        \ar[l]_-{(Fv)^*} {(F\tau_{\geq m}X,FY)} &
        \ar[l]_-{(Fd)^*}
        {(\Sigma F\tau_{<m} X,FY)}
      }
    \end{equation*}
    with exact rows.
    The leftmost and rightmost vertical arrows are bijective by
    \ref{enum:partial-ff}. Let $f \in \Hom(X,Y)$ be non-zero with
    $F(f)=0$.  The obvious diagram chase shows that there is some
    morphism $\hat{f} \in \Hom(\tau_{\geq m}X, Y)$ with
    $\hat{f} \circ v=v^*(\hat{f})=f$. Then $F(\hat{f})$ goes to
    zero under $(Fv)^*$ and another diagram chase yields some
    morphism $h \in \Hom(\Sigma\tau_{<m}X,Y)$ such that
    $F(d^*(h))=(F(d))^*(F(h))= F(\hat{f})$.  Hence
    $F(\hat{f}-h \circ d)$ is zero and the composition
    $X \xra{v} \tau_{\geq m} X \xra{\hat{f}-h \circ d} Y$ is $f$.

    The same argument, now starting with the truncation triangle
    for $Y$ and the map $\hat{f}-h \circ d$, finishes the proof.
  \end{proof}
\item
  Let $R$ be a quasi-Frobenius ring. Then 
  $\D(\mod(R))$ is idempotent-complete.

  \begin{proof}
    Let $e \colon E \ra E$ be an idempotent endomorphism in
    $\D(\mod(R))$.  Since intelligent truncation is functorial,
    $\tau_{\leq 0}(e)$ and $\tau_{>0}(e)$ are idempotent
    endomorphisms.  By
    \cite[Proposition~2.3]{le-chen-karoubi-trcat-bdd-t-str} it is
    enough to show that they split.  Since $\mod(R)$ has enough
    projectives and injectives (the injective hull of a finitely
    generated $R$-module is projective and then easily seen to be
    finitely generated), the obvious functors
    $\K^-(\proj(R)) \ra \D^-(\mod(R))$ and
    $\K^+(\inj(R)) \ra \D^+(\mod(R))$ are equivalences of
    triangulated categories, where $\K^-$ resp.\ $\K^+$ indicate
    that only bounded above resp.\ bounded below complexes are
    considered.  Now note that $\K^-(\proj(R))$ and
    $\K^+(\inj(R))$ are idempotent-complete by
    \cite[Theorem~3.1]{schnuerer-idempotent}.
  \end{proof}
\end{enumerate}

\section{First positive result}
\label{sec:first-positive-result}

Let $\mathcal{A}$ be an abelian category. The \define{global
  dimension of $\mathcal{A}$} is the smallest integer $d \geq -1$
with the property that $\Ext_{\mathcal{A}}^{d+1}(M,N)$ vanishes
for all objects $M,N \in\mathcal{A}$, if such an integer exists,
and $\infty$ otherwise.

\begin{proposition}
  \label{p:finite-global-dim-proj-comp=>h-proj}
  Let $\mathcal{A}$ be an abelian category with enough projective
  objects that has finite global dimension.  Then
  \begin{enumerate}
  \item
    \label{enum:h-proj-resolution}
    for any complex $K$ in $\mathcal{A}$ there is a
    quasi-isomorphism $P \ra K$ where $P$ has projective
    components and hence is h-projective by
    \ref{enum:projective-components-h-projective};
  \item
    \label{enum:projective-components-h-projective}
    any complex in $\mathcal{A}$ with projective components is
    h-projective.
  \end{enumerate}
  In particular, the obvious functor is an equivalence
  $\K(\proj(\mathcal{A})) \sira \D(\mathcal{A})$.  Hence
  $\D(\mathcal{A})$ lives in the same universe as
  $\K(\proj(\mathcal{A}))$.
\end{proposition}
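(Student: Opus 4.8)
The plan is to prove part~\ref{enum:projective-components-h-projective} first, since part~\ref{enum:h-proj-resolution} is largely standard once we have a good supply of projective resolutions and know that the bounded-above resolutions we build are in fact h-projective. For part~\ref{enum:projective-components-h-projective}, let $d$ be the global dimension of $\mathcal{A}$ and let $P$ be a complex with projective components. I want to show $\Hom_{\K(\mathcal{A})}(P,A)=0$ for every acyclic complex $A$. The key point is that finite global dimension lets us run a "stupid truncation plus induction on the width of the truncation window" argument that would fail in infinite global dimension: for bounded-below complexes of projectives, h-projectivity is classical (build the chain map and homotopy degree by degree, using projectivity of each $P^n$ against the epimorphisms $\Z^{k}(A)\sra \B^{k}(A)$ supplied by acyclicity of $A$), and more precisely one shows $\Hom_{\K(\mathcal{A})}(\sigma_{\geq n}P, A)=0$ for the stupid truncation $\sigma_{\geq n}P$. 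Then one takes the limit: $P=\lim_n \sigma_{\geq n}P$ in $\C(\mathcal{A})$, and the stupid truncation triangles $\sigma_{\geq n+1}P\to \sigma_{\geq n}P\to P^{n}[-n]\to$ give a Milnor-type exact sequence. The finiteness of $d$ enters because $\Hom_{\K(\mathcal{A})}(P^n[-n],A)$ computes, after shifting, groups that vanish once the cohomological degree exceeds $d$ against an acyclic complex — so the inverse system stabilizes and $\varprojlim$ and $\varprojlim^1$ both vanish.

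Concretely, the cleanest route I would take is this: write $A$ acyclic and set $Z^k=\Z^k(A)$. For a single projective module $Q$ placed in degree $m$, $\Hom_{\K(\mathcal{A})}(Q[-m],A)$ is the homology at spot $m$ of the complex $\Hom_{\mathcal{A}}(Q, A^{\bullet})$; since $Q$ is projective and $A$ is acyclic, $\Hom_{\mathcal{A}}(Q,A^\bullet)$ is still acyclic (projectivity exactness), so this Hom group vanishes. Therefore every term $P^n[-n]$ is already h-projective. The subtle issue is only that an arbitrary product/limit of h-projectives need not be h-projective — and this is exactly where I expect the main obstacle to lie. I would resolve it by combining two facts: (1) the stupid truncations $\sigma_{\geq n}P$ are h-projective (bounded-below complexes of projectives, classical), and (2) $P$ is the homotopy limit of the $\sigma_{\geq n}P$, i.e.\ there is a triangle $P\to \prod_n \sigma_{\geq n}P\to \prod_n \sigma_{\geq n}P\to$; applying $\Hom_{\K(\mathcal{A})}(-,A)$ and using that each $\sigma_{\geq n}P$ kills $A$ would let me conclude $\Hom_{\K(\mathcal{A})}(P,A)=0$ provided the relevant $\varprojlim^1$ term vanishes. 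That $\varprojlim^1$ vanishes is where finite global dimension is genuinely used: for fixed acyclic $A$, the transition maps in the system $n\mapsto \Hom_{\K(\mathcal{A})}(\sigma_{\geq n}P,A)$ become isomorphisms for $n$ far below the "testing degree," because the difference between consecutive truncations is a single $P^n[-n]$, which is h-projective; hence the system is essentially constant (indeed zero) and the Mittag--Leffler condition holds trivially.

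Once part~\ref{enum:projective-components-h-projective} is in hand, part~\ref{enum:h-proj-resolution} follows: given any complex $K$, I would produce a quasi-isomorphism $P\to K$ with $P$ having projective components by the standard construction — take projective resolutions of the terms compatible with the differential (a Cartan--Eilenberg-style resolution, or for bounded-above $K$ simply resolve and take total complex), truncate using that $\mathrm{pd}\leq d$ so that the resolution closes up after $d$ steps, and for unbounded $K$ assemble the bounded-above resolutions of the stupid truncations $\sigma_{\leq n}K$ into a limit, again using finite global dimension to control the process and to guarantee the limit complex is quasi-isomorphic to $K$. By part~\ref{enum:projective-components-h-projective} this $P$ is automatically h-projective, giving the "hence" clause. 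The final two sentences of the proposition are then immediate formal consequences: the functor $\K(\proj(\mathcal{A}))\to\D(\mathcal{A})$ is essentially surjective by~\ref{enum:h-proj-resolution}, and fully faithful because for h-projective $P$ the map $\Hom_{\K(\mathcal{A})}(P,-)\to\Hom_{\D(\mathcal{A})}(P,-)$ is an isomorphism (recalled in Section~\ref{sec:notat-prel}); the universe remark is just the observation that morphism sets in $\D(\mathcal{A})$ between objects are computed as morphism sets in $\K(\proj(\mathcal{A}))$, so no set-theoretic enlargement beyond $\K(\proj(\mathcal{A}))$ is needed. The one step I would flag as the crux — and worth writing out carefully rather than hand-waving — is the homotopy-limit argument in~\ref{enum:projective-components-h-projective}, since that is precisely the place where the hypothesis of finite global dimension is indispensable and where a careless argument would "prove" the false statement that this holds in general.
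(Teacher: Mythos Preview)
Your approach to part~\ref{enum:projective-components-h-projective} has a genuine gap. The claim that bounded-\emph{below} complexes of projectives are h-projective is false in general: over $R=k[\epsilon]/(\epsilon^2)$, the complex $0\to R\xra{\epsilon}R\xra{\epsilon}\cdots$ is bounded below with projective terms, but the obvious chain map from it into the doubly-infinite acyclic complex $\cdots\xra{\epsilon}R\xra{\epsilon}R\xra{\epsilon}\cdots$ is not null-homotopic. It is bounded-\emph{above} complexes of projectives that are classically h-projective; the inductive construction of a null-homotopy runs by descending degree and needs a top to start from. Switching to the bounded-above truncations $\sigma_{\le n}P$ does not rescue the plan either: these form an \emph{inverse} system (via the quotients $\sigma_{\le n+1}P\sra\sigma_{\le n}P$), so $P$ is their homotopy limit, and $\Hom_{\K(\mathcal A)}(-,A)$ applied to a homotopy limit yields no Milnor sequence. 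More fundamentally, any homotopy-(co)limit argument needs countable products or coproducts in $\mathcal A$, which are not assumed here --- and the proposition is later applied (in Theorem~\ref{t:equiv-abelian-subcat-gdim-finite}) to the small subcategory $\mathcal U$, which typically lacks them. A further warning sign: in your sketch the finite-global-dimension hypothesis never actually bites (each term of your inverse system is already zero), so the argument as written would ``prove'' the false statement for arbitrary $\mathcal A$.

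The paper proceeds in the opposite order and avoids all infinite (co)limits. For~\ref{enum:h-proj-resolution} one builds, directly for the unbounded complex $K$, a Cartan--Eilenberg resolution $0\to P_d\to\cdots\to P_0\to K\to 0$ of height $d$ (each object of $\mathcal A$ has a projective resolution of length $\le d$); its total complex $P$ is a finite direct sum in each degree and maps quasi-isomorphically to $K$. H-projectivity of this particular $P$ is checked by hand: each row $P_p$ is homotopy equivalent to a complex with zero differential and projective components, hence h-projective, and one climbs the finite filtration $P_{\le 0}\subset\cdots\subset P_{\le d}=P$ through $d$ triangles. Part~\ref{enum:projective-components-h-projective} then follows: given any $Q\in\K(\proj(\mathcal A))$, take the quasi-isomorphism $P\to Q$ from~\ref{enum:h-proj-resolution}; its cone $C$ is acyclic with projective components, and the exact sequences $0\to\Z^{-d+1}(C)\to C^{-d+1}\to\cdots\to C^0\to\Z^1(C)\to 0$ (and their shifts) force every cocycle object $\Z^i(C)$ to be projective, so $C$ is contractible and $Q\cong P$ in $\K(\mathcal A)$.
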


\begin{proof}
  Let $d \in \bN \cup \{-1\}$ be the global dimension of
  $\mathcal{A}$.  If $d=-1$ then $\mathcal{A}=0$ and all claims
  are trivial.  If $d \geq 0$ we will use the following
  well-known fact: If
  $0 \ra L \ra Q_{d-1} \ra \dots \ra Q_0 \ra M \ra 0$ is an exact
  sequence in $\mathcal{A}$ where all $Q_i$ are projective, then
  $L$ is projective. In particular, any object $M$ of
  $\mathcal{A}$ has a projective resolution of the form
  $0 \ra Q_d \ra Q_{d-1} \ra \dots \ra Q_0 \ra M \ra 0$.
  
  \ref{enum:h-proj-resolution}
  Our proof
  closely
  follows
  \cite[Appendix]{Keller-construction-of-triangle-equiv}.
  Let $K$ be a complex in $\mathcal{A}$.
  Since any object of $M$ has a projective
    resolution of length $d$, there is a
    projective resolution
  \begin{equation*}
    0 \ra P_d \ra P_{d-1} \ra \dots \ra P_0 \ra K \ra 0
  \end{equation*}
  of $K$ in the sense of Cartan--Eilenberg.
  To construct such a resolution, choose
  projective resolutions
  $0 \ra R_d^i \ra \dots \ra R_0^i \ra \HH^i(K) \ra 0$ and
  $0 \ra Q_d^i \ra \dots \ra Q_0^i \ra \B^i(K) \ra 0$ for all
  $i \in \bZ$ and combine these resolutions of all cohomology and
  coboundary objects in the obvious way to projective resolutions
  of all cocycle objects $\Z^i(K)$ and then to projective
  resolutions of all components $K^i$.
  
  Let $P$ be the total complex of
  $0 \ra P_d \ra P_{d-1} \ra \dots \ra P_0 \ra 0$. Then all
  components of $P$ are projective and the obvious map $P \ra K$
  is clearly a quasi-isomorphism.  This proves
  \ref{enum:h-proj-resolution}.

  We now show that $P$ is h-projective.
  Each complex $P_p$ has the form
  \begin{equation*}
    \dots \ra
    Q^i_p \oplus R^i_p \oplus Q^{i+1}_p
    \xra{d}
    Q^{i+1}_p \oplus R^{i+1}_p \oplus Q^{i+2}_p
    \ra \dots
  \end{equation*}
  where the differential $d$ is the composition of the projection
  to and the inclusion from $Q^{i+1}_p$. Hence $P_p$ is
  isomorphic in $\K(\mathcal{A})$ to the complex with vanishing
  differentials whose component in degree $i$ is the projective
  object $R^i_p$; this latter complex is certainly
  h-projective. Hence $P_p$ is h-projective.

  If $P_{\leq l}$ denotes the total complex of
  $0 \ra P_l \ra \dots \ra P_0 \ra 0$ we have triangles
  $P_{\leq l} \ra P_{\leq l+1} \ra \Sigma^{l+1} P_{l+1} \ra$ in
  $\K(\mathcal{A})$ for all $l$ with $0 \leq l \leq d$. Since
  $P_0=P_{\leq 0}$ and all $P_p$ are h-projective we see by
  induction that all $P_{\leq l}$ are h-projective.  In
  particular, $P=P_{\leq d}$ is h-projective.

  \ref{enum:projective-components-h-projective}
  Let $Q$ be a complex in $\mathcal{A}$ with projective
  components. We just proved that there is a quasi-isomorphism
  $f \colon P \ra Q$ where $P$ has projective components and is
  h-projective.  It is enough to show that $f$ is an isomorphism
  in $\K(\mathcal{A})$. Equivalently, we show that the standard
  cone $C$ of $f$ is zero in $\K(\mathcal{A})$. Note that $C$ is
  an acyclic complex with projective components. These two
  properties will imply that $C$ is isomorphic to $0$ in
  $\K(\mathcal{A})$.  Consider the exact sequence
  \begin{equation*}
    0 \ra \Z^{-d+1}(C) \ra C^{-d+1} \ra \dots \ra C^{-1} \ra C^{0}
    \ra \Z^1(C) 
    \ra 0 
  \end{equation*}
  in $\mathcal{A}$ where $\Z^i(C)$ is the $i$-th cocycle
  object. Since all $C^i$ are projective and $d$ is the global
  dimension of $\mathcal{A}$, the cocycle object $\Z^{-d+1}(C)$
  is projective. The obvious shift of this argument shows that
  all cocycle objects $\Z^i(C)$ are projective.  But then all
  short exact sequences $\Z^i(C) \hra C^i \sra \Z^{i+1}(C)$ split
  and it is easy to see that $C$ is contractible, i.\,e.\ zero in
  $\K(\mathcal{A})$.

  This proves \ref{enum:h-proj-resolution} and
  \ref{enum:projective-components-h-projective}. The remaining
  statements then follow from general properties of h-projective
  objects explained at the end of section~\ref{sec:notat-prel}.
\end{proof}

\begin{proposition}
  \label{p:essentially-surjective}
  Let $\mathcal{A}$ be an abelian category with exact countable
  coproducts, and let $\mathcal{U}\subset\mathcal{A}$ be a
  non-empty, full subcategory closed under kernels, cokernels,
  and extensions.  Assume that the abelian category $\mathcal{U}$
  has finite global dimension, and that for any epimorphism
  $A\sra U$ in $\mathcal{A}$ with $U\in\mathcal{U}$ there exists
  a morphism $V\ra A$ with $V\in\mathcal{U}$ such that the
  composition $V\ra A\sra U$ is an epimorphism.  Then the functor
  $\D(\mathcal{U})\ra\D_{\mathcal{U}}(\mathcal{A})$ is
  essentially surjective.
\end{proposition}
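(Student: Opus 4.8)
Given an object $C \in \D_{\mathcal{U}}(\mathcal{A})$, I want to produce a complex in $\mathcal{U}$ quasi-isomorphic to it. The idea is a two-stage approach: first approximate $C$ from above by complexes in $\mathcal{U}$ that are bounded below (using Proposition~\ref{p:D-minus-equivalence}, or rather its construction, in the truncated pieces), and then take a homotopy colimit over an increasing tower of such bounded-below approximations to capture the unbounded complex. Concretely, write $C$ as the homotopy colimit of its intelligent truncations $\tau_{\geq -n} C$ (this is where exact countable coproducts are used, so that $\hocolim$ behaves well: the telescope triangle $\bigoplus \tau_{\geq -n}C \to \bigoplus \tau_{\geq -n}C \to \hocolim \to$ computes the right object). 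Each $\tau_{\geq -n} C$ lies in $\D^+_{\mathcal{U}}(\mathcal{A})$; I would like to replace it by a complex in $\mathcal{U}$, compatibly in $n$.

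The compatibility is the crux. A clean way to arrange it: build, by induction on $n$, complexes $L_n$ in $\mathcal{U}$ bounded below, with termwise-split monomorphisms $L_n \hookrightarrow L_{n+1}$, and quasi-isomorphisms $L_n \to \tau_{\geq -n} C$ commuting with the tower maps. The inductive step asks: given $L_n \to \tau_{\geq -n}C$ and the map $\tau_{\geq -n}C \to \tau_{\geq -n-1}C$, extend to $L_{n+1} \to \tau_{\geq -n-1}C$. One builds $L_{n+1}$ by adjoining finitely many new terms of $\mathcal{U}$ at the bottom (degrees $\leq -n$), exactly as in the proof of Proposition~\ref{p:D-minus-equivalence}: one needs an object $V \in \mathcal{U}$ surjecting onto the relevant coboundary/cohomology object of the chosen representative of $\tau_{\geq -n-1}C$, which the hypothesis on $\mathcal{A}$ (epimorphisms from $\mathcal{U}$) supplies after the usual Freyd--Mitchell diagram chase. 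Then set $L := \colim_n L_n = \bigcup_n L_n$ (a genuine colimit of complexes, with $\mathcal{U}$-terms since each term stabilizes — here again countable coproducts/filtered colimits in $\mathcal{A}$ enter). The induced map $L \to C$ should be a quasi-isomorphism because cohomology commutes with these filtered colimits and $\HH^i(L_n) \xrightarrow{\sim} \HH^i(\tau_{\geq -n}C) = \HH^i(C)$ for $n$ large.

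An alternative, slightly more robust route avoids building the strict tower of inclusions. Since $\mathcal{U}$ has finite global dimension, Proposition~\ref{p:finite-global-dim-proj-comp=>h-proj} gives $\K(\proj(\mathcal{U})) \xrightarrow{\sim} \D(\mathcal{U})$; so it suffices to realize $C$ by a complex of projectives of $\mathcal{U}$. For each $n$, Proposition~\ref{p:D-minus-equivalence} provides $L_n \in \D^+(\mathcal{U})$ with $L_n \xrightarrow{\sim} \tau_{\geq -n}C$ in $\D_{\mathcal{U}}(\mathcal{A})$; the connecting maps $\tau_{\geq -n}C \to \tau_{\geq -n-1}C$ transport (via full faithfulness of $\D^+(\mathcal{U}) \to \D^+_{\mathcal{U}}(\mathcal{A})$, Proposition~\ref{p:D-minus-equivalence}) to maps $L_n \to L_{n+1}$ in $\D(\mathcal{U})$, making $(L_n)$ a tower in $\D(\mathcal{U})$ whose image tower in $\D_{\mathcal{U}}(\mathcal{A})$ is $(\tau_{\geq -n}C)$. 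Now form $L := \hocolim_n L_n$ in $\D(\mathcal{U})$ (using that $\D(\mathcal{U})$ has countable coproducts — $\mathcal{U}$ has exact countable coproducts as a subcategory closed under the relevant operations with $\mathcal{A}$ having them, or one works inside $\K(\proj \mathcal{U})$ where coproducts are termwise). The triangulated functor $\D(\mathcal{U}) \to \D_{\mathcal{U}}(\mathcal{A})$ preserves countable coproducts, hence homotopy colimits, so it sends $L$ to $\hocolim_n \tau_{\geq -n}C \cong C$. Thus $C$ is in the essential image.

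**Main obstacle.**
The delicate point in either route is controlling coproducts and homotopy colimits. In the second route one must know $\D(\mathcal{U}) \to \D_{\mathcal{U}}(\mathcal{A})$ preserves countable coproducts; since coproducts on both sides are computed "naively" at the level of complexes (coproducts of quasi-isomorphisms in $\mathcal{A}$, and in $\mathcal{U}$, being quasi-isomorphisms — which needs exactness of countable coproducts, available in $\mathcal{A}$ by hypothesis and inherited by $\mathcal{U}$), this is fine, but it must be checked that $\mathcal{U}$ actually has exact countable coproducts: a countable coproduct in $\mathcal{U}$ need not a priori agree with the one in $\mathcal{A}$. One resolves this by noting that $\mathcal{A}$'s countable coproducts of objects of $\mathcal{U}$ — while perhaps not in $\mathcal{U}$ — still let the \emph{telescope} construction be performed inside $\K(\proj(\mathcal{U}))$, where the coproduct is termwise and stays in $\proj(\mathcal{U})$ only after checking it is a summand of a coproduct of representables\,\dots\ the cleanest fix is to run the first route, where $L = \bigcup_n L_n$ is a filtered union along split monos, so each term of $L$ is a (stabilizing, hence finite-stage) filtered colimit in $\mathcal{A}$ of objects of $\mathcal{U}$, which \emph{does} land in $\mathcal{U}$ since $\mathcal{U}$ is closed under cokernels and the split monos make the colimit a directed union computed degreewise. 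Verifying that $L \to C$ is a quasi-isomorphism — i.e., that cohomology commutes with this countable filtered colimit of complexes — is then the one genuinely computational lemma, and it is exactly where exactness of countable coproducts (equivalently, of countable filtered colimits along monomorphisms) in $\mathcal{A}$ is indispensable.
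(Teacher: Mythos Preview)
Your approach has the right flavor (approximate truncations and pass to a homotopy colimit), but the directions are reversed in a way that undermines both routes, and the essential use of finite global dimension is missing.

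First, the truncations $\tau_{\geq -n} C$ form an \emph{inverse} system with maps $\tau_{\geq -(n+1)} C \to \tau_{\geq -n} C$, so $C$ is their homotopy \emph{limit}, not colimit; the map you write, $\tau_{\geq -n} C \to \tau_{\geq -n-1} C$, does not exist naturally. To exploit exact countable coproducts and $\hocolim$, one must use the direct system $\tau_{\leq n} M$ (with $M \cong \hocolim_n \tau_{\leq n} M$), as the paper does. Relatedly, Proposition~\ref{p:D-minus-equivalence} concerns $\D^-$, not $\D^+$: the epimorphism hypothesis lets one approximate bounded-\emph{above} complexes by complexes in $\mathcal{U}$, and under the stated assumptions there is no reason for $\D^+(\mathcal{U}) \to \D^+_{\mathcal{U}}(\mathcal{A})$ to be an equivalence.

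Second, even after flipping to $\tau_{\leq n}$, your arguments that the colimit complex has terms in $\mathcal{U}$ break down. In the first route you assert that each inductive step adjoins ``finitely many new terms'' so that degrees stabilize; but the construction in Proposition~\ref{p:D-minus-equivalence} extends a complex by an entire infinite tail going to $-\infty$. In the second route you need countable coproducts in $\D(\mathcal{U})$ or in $\K(\proj(\mathcal{U}))$, but $\mathcal{U}$ (e.g.\ $\mod R$) typically has neither, and the hypotheses do not provide enough projectives in $\mathcal{U}$ anyway. The paper's key step, which you are missing, is this: to pass from $L_n \simeq \tau_{\leq n} M$ to $L_{n+1} \simeq \tau_{\leq n+1} M$ one must glue in a complex $Q_{n+1}$ in $\mathcal{U}$ quasi-isomorphic to $\HH^{n+1}(M)$; using that $\mathcal{U}$ has global dimension $d$, one arranges $Q_{n+1}$ to be \emph{bounded}, concentrated in degrees $[-d,0]$. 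Then, ignoring differentials, $L_\infty = L_0 \oplus \bigoplus_{n \geq 1} \Sigma^{-n} Q_n$, and since $\Sigma^{-n} Q_n$ sits in degrees $[n-d,n]$, each fixed degree of $L_\infty$ is a \emph{finite} direct sum of objects of $\mathcal{U}$ and hence lies in $\mathcal{U}$. The homotopy colimit identifying $L_\infty$ with $M$ is then taken in $\D(\mathcal{A})$, where coproducts are available. This boundedness trick is the heart of the proof and is precisely where the finite global dimension of $\mathcal{U}$ is consumed.
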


\begin{proof}
  Let $M$ be an object of
  $\D_{\mathcal{U}}(\mathcal{A})$. Consider its intelligent
  truncation
  $\tau_{\leq 0} M \in \D_{\mathcal{U}}^{\leq 0}(\mathcal{A})$.
  Then the proof of Proposition~\ref{p:D-minus-equivalence} shows
  that there is a complex $L_0\in\D^{\leq 0}(\mathcal{U})$
  together with a quasi-isomorphism
  $f_0 \colon L_0 \ra \tau_{\leq 0} M$ of complexes in
  $\mathcal{A}$.
  
  Let $d \in \bN \cup \{-1\}$ be the global dimension
  of~$\mathcal{U}$.  We explain the following diagram in
  $\D(\mathcal{A})$ --- some parts come from honest morphisms and
  inclusions of complexes.
  \begin{equation}
    \label{eq:morphism-of-triangles}
    \begin{gathered}
      \xymatrix{
        {\Sigma^{-2}\HH^1(M)} \ar[r] &
        {\tau_{\leq 0} M} \ar[r]^-{\mu_0} &
        {\tau_{\leq 1} M} \ar[r] &
        {\Sigma^{-1}\HH^1(M)}\\
        {\Sigma^{-2} Q_1}
        \ar[r]^-{g_1} \ar[u]^-{\Sigma^{-2} q_1} &
        {L_0} \ar[r]^-{\iota_0} \ar[u]^-{f_0} &
        {L_1} \ar[r] \ar[u]^-{f_1}&
        {\Sigma^{-1} Q_1}  \ar[u]^-{\Sigma^{-1} q_1}
      }
    \end{gathered}
  \end{equation}
  The upper row is a rotation of the obvious truncation triangle
  in $\D(\mathcal{A})$ where $\mu_0$ is the obvious inclusion of
  complexes.  To construct the lower row, recall that
  $f_0\colon L_0\ra\tau_{\le0}M$ is an isomorphism in
  $\D(\mathcal{A})$.  Therefore, there exists a morphism
  $h_1\colon\Sigma^{-2}\HH^1(M)\ra L_0$ in $\D(\mathcal{A})$
  such that the map $\Sigma^{-2}\HH^1(M)\ra\tau_{\le0}M$ from
  the upper row of \eqref{eq:morphism-of-triangles} factors as
  $\Sigma^{-2}\HH^1(M)\xra{h_1}L_0\xra{f_0}\tau_{\le0}M$.
  The morphism $h_1$ can be represented by a roof consisting of
  a quasi-isomorphism $\Sigma^{-2}K_1\ra\Sigma^{-2}\HH^1(M)$ and
  a morphism of complexes $\Sigma^{-2}K_1\ra L_0$.
  Using the argument from the proof of
  Proposition~\ref{p:D-minus-equivalence}, we can assume that
  $K_1\in\D^{\leq 0}(\mathcal{U})$ (since
  $\HH^1(M)\in\mathcal{U}$).
  
  Our next aim is to show that the complex $K=K_1$ can be
  replaced by a bounded complex with terms in the cohomological
  degrees from $-d$ to $0$.  Indeed, let $\Z^{-d}(K)$ be the
  image of the differential $K^{-d-1} \ra K^{-d}$.  Then the
  exact sequence
  $0\ra\Z^{-d}(K)\ra K^{-d}\ra K^{-d+1} \ra\dots\ra K^1\ra K^0\ra
  \HH^0(K)\ra0$ (where
  $\HH^0(K) \cong \HH^1(M)$)
  represents a Yoneda
  extension class in the group
  $\Ext^{d+1}_{\mathcal{U}}(\HH^0(K),\Z^{-d}(K))$.  Since
  $\mathcal{U}$ has global dimension $d$, this Ext group
  vanishes.  By construction of Yoneda Ext, this means that there
  is a roof connecting our exact sequence with the exact sequence
  $0\ra\Z^{-d}(K)\xra{\id}\Z^{-d}(K)\ra0\ra\dots\ra0\ra
  \HH^0(K)\xra{\id} \HH^0(K)\ra0$ representing the trivial
  extension class.  So we have the following commutative diagram
  with exact rows in the category $\mathcal{U}$.
  \begin{equation*}
    \xymatrix@C-=0.7cm{
      0\ar[r] & \Z^{-d}(K) \ar[r] & K^{-d} \ar[r] & K^{-d+1} \ar[r]
      & {\dots} \ar[r] & K^{-1} \ar[r] & K^0 \ar[r] & \HH^0(K)
      \ar[r] 
      & 0 \\
      0\ar[r] & \Z^{-d}(K) \ar[r]\ar@{=}[u]\ar@{=}[d] &
      \widetilde Q^{-d} \ar[r]\ar[u]\ar[d] & Q^{-d+1}
      \ar[r]\ar[u]\ar[d] 
      & {\dots} \ar[r] & Q^{-1} \ar[r]\ar[u]\ar[d] & Q^0
      \ar[r]\ar[u]\ar[d] & \HH^0(K) \ar[r]\ar@{=}[d]\ar@{=}[u] &
      0 \\ 
      0 \ar[r] & \Z^{-d}(K)
      \ar[r]^-{\id}
      & \Z^{-d}(K) \ar[r] & 0
      \ar[r] 
      & {\dots} \ar[r] & 0 \ar[r] & \HH^0(K)
      \ar[r]^-{\id}
      &
      \HH^0(K) \ar[r] & 0
    }
  \end{equation*}
  It is clear from this diagram that the monomorphism
  $\Z^{-d}(K)\ra\widetilde Q^{-d}$ splits; so
  $\widetilde Q^{-d}\cong\Z^{-d}(K)\oplus Q^{-d}$ for some object
  $Q^{-d}\in\mathcal{U}$.  We have constructed a bounded complex
  $Q:=(Q^{-d}\ra Q^{-d+1}\ra\dots \ra Q^0)$ in $\mathcal{U}$
  together with a quasi-isomorphism $Q\ra K$.
  
  Now we can finish our construction of diagram
  \eqref{eq:morphism-of-triangles}.  Put $Q_1=Q$, and let
  $q_1\colon Q_1\ra\HH^1(M)$ be the composition
  $Q_1\ra K_1\ra\HH^1(M)$
  of quasi-isomorphisms
  of complexes.
  Furthermore, let $g_1\colon\Sigma^{-2}Q_1\ra L_0$ be the
  composition
  $\Sigma^{-2}Q_1\ra\Sigma^{-2}K_1\ra L_0$
  of morphisms of complexes.
  Then the leftmost
  square in \eqref{eq:morphism-of-triangles} is
  commutative in $\D(\mathcal{A})$.
  We define $L_1$
  as the standard cone of $g_1$; in particular,
  $L_0 \xra{\iota_0} L_1 \ra\Sigma^{-1}Q_1$ is a short exact
  sequence of complexes that splits in each degree; we have
  $L_1=L_0 \oplus \Sigma^{-1}Q_1$ if we ignore the differentials.
  The lower row of our diagram is
  then
  the triangle
    coming from the obvious standard triangle in $\K(\mathcal{U})$.
  The
  morphism $f_1$ is a (non-unique) morphism in $\D(\mathcal{A})$
  completing the two morphisms $\Sigma^{-2}q_1$ and $f_0$ to a
  morphism of triangles. Since $q_1$ and $f_0$ are isomorphisms,
  $f_1$ is an isomorphism in $\D(\mathcal{A})$.
  
  Next we are going to apply the same construction to the
  intelligent truncation triangle
  $\Sigma^{-3}\HH^2(M)\ra\tau_{\leq 1}M\ra \tau_{\leq
    2}M\ra\Sigma^{-2}\HH^2(M)$ and the morphism
  $f_1\colon L_1\ra \tau_{\leq 1}M$.  At this stage, unlike
  $f_0$, the morphism $f_1$ only exists in the derived category
  $\D(\mathcal{A})$; we do not know whether $f_1$ comes from an
  honest morphism of complexes.  But this is not a problem for
  our construction.  Since $f_1$ is an isomorphism in
  $\D(\mathcal{A})$, there exists a morphism
  $h_2\colon\Sigma^{-3}\HH^2(M)\ra L_1$ in $\D(\mathcal{A})$
  such that the map $\Sigma^{-3}\HH^2(M)\ra\tau_{\le1}M$
  factors as
  $\Sigma^{-3}\HH^2(M)\xra{h_2}L_1\xra{f_1}\tau_{\le1}M$.
  The morphism $h_2$ can be represented by a roof consisting of a
  quasi-isomorphism $\Sigma^{-3}K_2\ra\Sigma^{-3}\HH^2(M)$ and a
  morphism of complexes $\Sigma^{-3}K_2\ra L_1$, etc.
  
  Iterating this construction yields for each $n \in \bN$ a
  diagram
  \begin{equation*}
    \xymatrix{
      {\Sigma^{-(n+1)-1}\HH^{n+1}(M)} \ar[r] &
      {\tau_{\leq n} M} \ar[r]^-{\mu_n} &
      {\tau_{\leq {n+1}} M} \ar[r] &
      {\Sigma^{-(n+1)}\HH^{n+1}(M)}\\
      {\Sigma^{-(n+1)-1} Q_{n+1}}
      \ar[r]^-{g_{n+1}}
      \ar[u]^-{\Sigma^{-(n+1)-1} q_{n+1}} & 
      {L_n} \ar[r]^-{\iota_n} \ar[u]^-{f_n} &
      {L_{n+1}} \ar[r] \ar[u]^-{f_{n+1}}&
      {\Sigma^{-(n+1)} Q_{n+1}.}
      \ar[u]^-{\Sigma^{-(n+1)} q_{n+1}}
    }
  \end{equation*}
  with properties similar to those of
  \eqref{eq:morphism-of-triangles}.

  Note that $M$ is the colimit of
  $\tau_{\leq 0} M \subset \tau_{\leq 1} M \subset \dots$ in the
  category of complexes in $\mathcal{A}$.  Let $L_\infty$ be the
  colimit of
  $L_0 \overset{\iota_0}{\subset} L_1 \overset{\iota_1}{\subset}
  \dots$ in the category of complexes in $\mathcal{A}$.  Then, if
  we ignore the differentials,
  \begin{equation*}
    L_\infty = L_0 \oplus \Sigma^{-1}Q_1 \oplus
    \Sigma^{-2}Q_2 \oplus \ldots
    = L_0 \oplus \bigoplus_{n \geq 1} \Sigma^{-n}Q_n.
  \end{equation*}
  Since $L_0$ and all $Q_n$ are complexes in $\mathcal{U}$ and
  $\Sigma^{-n}Q_n$ is concentrated in degrees $[-d+n,n]$, we see
  that $L_\infty$ is a complex in $\mathcal{U}$ as well.

  The sequence
  $\tau_{\leq 0} M \subset \tau_{\leq 1} M \subset \dots$ of
  inclusions of complexes is stabilizing in every degree, and the
  same statement is true for the sequence
  $L_0 \overset{\iota_0}{\subset} L_1 \overset{\iota_1}{\subset}
  \dots$ of inclusions of complexes (in the latter case all
  inclusions are even degreewise split).  Using the existence of
  countable coproducts in $\mathcal{A}$, we have degreewise split
  short exact sequences
  \begin{equation*}
    \bigoplus_{n \in \bN} \tau_{\leq n} M
    \hra 
    \bigoplus_{n \in \bN} \tau_{\leq n} M \sra M
    \qquad \text{and} \qquad
    \bigoplus_{n \in \bN} L_n
    \hra 
    \bigoplus_{n \in \bN} L_n \sra L_\infty  
  \end{equation*}
  in the abelian category of complexes in $\mathcal{A}$ (where
  the first map is ``identity minus shift'' in both cases).
  These two short exact sequences of complexes give rise to the
  two horizontal triangles in $\D(\mathcal{A})$ in the following
  diagram.
  \begin{equation*}
    \xymatrix{
      {\bigoplus_{n \in \bN} \tau_{\leq n} M}
      \ar[r]
      &
      {\bigoplus_{n \in \bN} \tau_{\leq n} M}
      \ar[r]
      &
      {M}
      \ar[r]
      &
      {\Sigma\bigoplus_{n \in \bN} \tau_{\leq n} M}
      \\
      {\bigoplus_{n \in \bN} L_n}
      \ar[r]
      \ar[u]^-{\bigoplus f_n}
      &
      {\bigoplus_{n \in \bN} L_n}
      \ar[r]
      \ar[u]^-{\bigoplus f_n}
      &
      {L_\infty}
      \ar[r]
      \ar@{..>}[u]^-{f}
      &
      {\Sigma\bigoplus_{n \in \bN} L_n}
      \ar[u]^-{\Sigma \bigoplus f_n}
    }
  \end{equation*}
  Our assumption that $\mathcal{A}$ has exact countable
  coproducts implies that countable coproducts in
  $\D(\mathcal{A})$ exist and can be computed naively.  Hence
  these two triangles exhibit $M$ and $L_\infty$ as homotopy
  colimits.  Moreover, taking the coproduct of the morphisms
  $f_n$ in $\D(\mathcal{A})$ defines the vertical morphism
  $\bigoplus f_n$.  Note that the square on the left is
  commutative in $\D(\mathcal{A})$ because
  $\mu_n \circ f_n=f_{n+1} \circ \iota_n$ for all $n \in \bN$.
  Hence a dotted morphism $f$ exists (non-uniquely) completing
  this square to a morphism of triangles.  This morphism $f$ is
  an isomorphism because all $f_n$ and hence their coproduct
  $\bigoplus f_n$ are isomorphisms.  Since $L_\infty$ is a
  complex in $\mathcal{U}$ we deduce that $M$ is in the essential
  image of our functor
  $\D(\mathcal{U}) \ra \D_{\mathcal{U}}(\mathcal{A})$.
\end{proof}

\begin{theorem}
  \label{t:equiv-abelian-subcat-gdim-finite}
  Let $\mathcal{A}$ be an abelian category of finite global
  dimension with exact countable coproducts, and let
  $\mathcal{U}\subset\mathcal{A}$ be a non-empty, full
  subcategory closed under kernels, cokernels, and extensions.
  Assume that both abelian categories $\mathcal{A}$ and
  $\mathcal{U}$ have enough projective objects and that
  $\proj(\mathcal{U}) \subset \proj(\mathcal{A})$ (and hence
  $\proj(\mathcal{U})= \mathcal{U} \cap \proj(\mathcal{A})$).
  Then the obvious functor is an equivalence
  $\D(\mathcal{U}) \sira \D_\mathcal{U}(\mathcal{A})$ of
  triangulated categories.
\end{theorem}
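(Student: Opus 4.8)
The plan is to verify essential surjectivity and full faithfulness of the functor $G\colon\D(\mathcal{U})\to\D_\mathcal{U}(\mathcal{A})$ separately. Both will follow almost formally from Propositions~\ref{p:finite-global-dim-proj-comp=>h-proj} and~\ref{p:essentially-surjective} once one knows that $\mathcal{U}$ inherits finite global dimension from $\mathcal{A}$, so the heart of the argument is that auxiliary fact.

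First I would prove $\operatorname{gldim}(\mathcal{U})\leq d$, where $d:=\operatorname{gldim}(\mathcal{A})$. Let $M\in\mathcal{U}$ and choose a projective resolution $\dots\to P_1\to P_0\to M\to0$ in $\mathcal{U}$. Since $\mathcal{U}$ is closed under kernels, every syzygy of $M$ lies in $\mathcal{U}$, in particular the $d$-th syzygy $\Omega^d M$. The exact sequence $0\to\Omega^d M\to P_{d-1}\to\dots\to P_0\to M\to0$ lives in $\mathcal{A}$ and has all $P_i\in\proj(\mathcal{U})\subset\proj(\mathcal{A})$, so the elementary fact recalled at the beginning of the proof of Proposition~\ref{p:finite-global-dim-proj-comp=>h-proj} shows $\Omega^d M\in\proj(\mathcal{A})$; hence $\Omega^d M\in\mathcal{U}\cap\proj(\mathcal{A})=\proj(\mathcal{U})$. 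Thus $M$ has a projective resolution of length $\leq d$ in $\mathcal{U}$, so $\operatorname{gldim}(\mathcal{U})\leq d$. (The equality $\proj(\mathcal{U})=\mathcal{U}\cap\proj(\mathcal{A})$ asserted in the statement is obtained in the same spirit, using that an epimorphism in $\mathcal{U}$ remains an epimorphism in $\mathcal{A}$ because $\mathcal{U}$ is closed under cokernels.)

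For essential surjectivity I would apply Proposition~\ref{p:essentially-surjective}. Its hypotheses hold: $\mathcal{A}$ has exact countable coproducts and $\mathcal{U}$ is closed under kernels, cokernels and extensions by assumption; $\mathcal{U}$ has finite global dimension by the previous paragraph; and given an epimorphism $A\sra U$ in $\mathcal{A}$ with $U\in\mathcal{U}$ one chooses an epimorphism $P\sra U$ with $P\in\proj(\mathcal{U})\subset\proj(\mathcal{A})$ and lifts it through $A\sra U$ to a morphism $P\to A$ whose composite with $A\sra U$ is still an epimorphism. Hence $G$ is essentially surjective. For full faithfulness, since $\mathcal{A}$ has enough projectives and finite global dimension, Proposition~\ref{p:finite-global-dim-proj-comp=>h-proj} gives an equivalence $\K(\proj(\mathcal{A}))\sira\D(\mathcal{A})$, and applied to $\mathcal{U}$ (using the previous paragraph) it gives $\K(\proj(\mathcal{U}))\sira\D(\mathcal{U})$. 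These fit into a square commuting up to natural isomorphism, with the vertical functors the two equivalences just named,
\[
\xymatrix{
  {\K(\proj(\mathcal{U}))} \ar[r] \ar[d] &
  {\K(\proj(\mathcal{A}))} \ar[d] \\
  {\D(\mathcal{U})} \ar[r] &
  {\D(\mathcal{A})}
}
\]
so it is enough to show the top functor is fully faithful. But $\proj(\mathcal{U})$ is a full additive subcategory of $\proj(\mathcal{A})$, and for complexes $X,Y$ with components in a full additive subcategory both the chain maps $X\to Y$ and the null-homotopic ones among them coincide with those computed in the ambient category; hence $\Hom_{\K(\proj(\mathcal{U}))}(X,Y)\sira\Hom_{\K(\proj(\mathcal{A}))}(X,Y)$. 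Therefore $\D(\mathcal{U})\to\D(\mathcal{A})$ is fully faithful, and since $\D_\mathcal{U}(\mathcal{A})\subset\D(\mathcal{A})$ is full, so is $G$. Combined with essential surjectivity this proves that $G$ is an equivalence.

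The hard part is the second paragraph: one must genuinely check that finite global dimension transfers to $\mathcal{U}$, and it is there that the hypotheses $\proj(\mathcal{U})\subset\proj(\mathcal{A})$ and closure of $\mathcal{U}$ under kernels are used in an essential way. Everything else is formal, resting on the two cited propositions and on the triviality that forming the homotopy category of a full additive subcategory changes neither objects, nor morphisms, nor homotopies.
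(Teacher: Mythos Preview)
Your proof is correct and follows essentially the same approach as the paper's: bound $\operatorname{gldim}(\mathcal{U})$, deduce full faithfulness from the commutative square comparing $\K(\proj(\mathcal{U}))\to\K(\proj(\mathcal{A}))$ with $\D(\mathcal{U})\to\D(\mathcal{A})$ via Proposition~\ref{p:finite-global-dim-proj-comp=>h-proj}, and obtain essential surjectivity from Proposition~\ref{p:essentially-surjective} using the projective-lifting trick. The only cosmetic difference is that the paper bounds $\operatorname{gldim}(\mathcal{U})$ by observing that the inclusion $\mathcal{U}\hra\mathcal{A}$ induces isomorphisms on all Ext groups (since a projective resolution in $\mathcal{U}$ is one in $\mathcal{A}$), whereas you phrase the same content as a syzygy argument; these are two ways of saying the same thing.
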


\begin{remark}
  If $\mathcal{A}$ is hereditary, i.\,e.\ of global dimension
  $\leq 1$, Theorem~\ref{t:equiv-abelian-subcat-gdim-finite} can
  be strengthened to Corollary~\ref{c:equiv-hereditary}.  Neither
  the assumption that countable coproducts exist nor the
  assumptions on projective objects are needed in this case.
\end{remark}
  
\begin{proof}
  Since there are enough projectives in $\mathcal{U}$ and they
  are also projective in $\mathcal{A}$, it is clear that the
  inclusion $\mathcal{U}\ra\mathcal{A}$ induces isomorphisms on
  all
  Ext groups.  Therefore, the global dimension of
  $\mathcal{U}$ cannot exceed the global dimension of
  $\mathcal{A}$.

  The inclusion $\mathcal{U}\ra\mathcal{A}$ restricts to a fully
  faithful functor $\proj(\mathcal{U})\ra\proj(\mathcal{A})$.
  Hence the induced triangulated functor
  $\K(\proj(\mathcal{U})) \ra \K(\proj(\mathcal{A}))$ is fully
  faithful.

  Since the category $\mathcal{A}$ has finite global dimension by
  assumption,
  Proposition~\ref{p:finite-global-dim-proj-comp=>h-proj} can be
  applied both to $\mathcal{U}$ and $\mathcal{A}$.  Hence, the
  vertical functors in the obvious commutative diagram
  \begin{equation*}
    \xymatrix{
      {\K(\proj(\mathcal{U}))} \ar[r] \ar[d]_-{\sim} & 
      {\K(\proj(\mathcal{A}))} \ar[d]_-{\sim} \\
      {\D(\mathcal{U})} \ar[r] & 
      {\D(\mathcal{A})}
    }
  \end{equation*}
  are equivalences. Since the upper horizontal functor is fully
  faithful, the lower horizontal functor has the same property.
  It clearly lands in $\D_{\mathcal{U}}(\mathcal{A})$.  So it
  remains to be shown that its essential image is
  $\D_{\mathcal{U}}(\mathcal{A})$.

  Let $A\sra U$ be an epimorphism in $\mathcal{A}$ with
  $U\in\mathcal{U}$.  Choose a projective object
  $V\in\proj(\mathcal{U})$ together with an epimorphism
  $V\sra U$.  Since $V$ is also projective in $\mathcal{A}$, the
  epimorphism $V\sra U$ factors through the epimorphism
  $A\sra U$.

  Therefore, Proposition \ref{p:essentially-surjective} is
  applicable, and we are done.
\end{proof}

\begin{corollary}
  \label{c:equiv-Dmod-DMod-gdim-finite}
  Let $R$ be a left coherent ring of finite left global
  dimension.
  Then the obvious triangulated functor is an equivalence
  \begin{equation*}
    \D(\mod(R)) \sira \D_{\mod(R)}(\Mod(R)).
  \end{equation*}
\end{corollary}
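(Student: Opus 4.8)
The plan is to deduce Corollary~\ref{c:equiv-Dmod-DMod-gdim-finite} as a special case of Theorem~\ref{t:equiv-abelian-subcat-gdim-finite}, with $\mathcal{A} = \Mod(R)$ and $\mathcal{U} = \mod(R)$. First I would check that the hypotheses of the theorem are met. The category $\Mod(R)$ has exact (countable) coproducts since coproducts of modules are exact. The subcategory $\mod(R)$ of finitely presented modules is non-empty (it contains $R$), full, and — because $R$ is left coherent — closed under kernels, cokernels, and extensions; this is the standard fact that $\mod(R)$ is an abelian subcategory of $\Mod(R)$ exactly when $R$ is left coherent. Next, $\Mod(R)$ has enough projectives (free modules), and since $R$ has finite left global dimension, $\mathcal{A} = \Mod(R)$ has finite global dimension. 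One also needs $\mod(R)$ to have enough projectives: every finitely presented module admits a surjection from a finitely generated free module $R^n$, and $R^n \in \mod(R)$ is projective, so this holds.

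The remaining hypothesis is $\proj(\mod(R)) \subset \proj(\Mod(R))$, i.e.\ that finitely presented projective $R$-modules are projective in $\Mod(R)$ — but this is immediate, since projectivity of a module does not depend on whether we view it inside $\mod(R)$ or $\Mod(R)$; a direct summand of $R^n$ is a direct summand of a free module. The paper's own notation already records $\proj(R) = \Proj(R) \cap \mod(R)$, so there is nothing to prove here. With all hypotheses verified, Theorem~\ref{t:equiv-abelian-subcat-gdim-finite} gives that the obvious functor $\D(\mod(R)) \sira \D_{\mod(R)}(\Mod(R))$ is an equivalence.

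I do not expect any real obstacle: the entire content of the corollary is the translation of the abstract categorical statement into the module-theoretic language, and the only point requiring a word of justification is that left coherence of $R$ is precisely what makes $\mod(R)$ an abelian subcategory of $\Mod(R)$ closed under kernels and cokernels (closure under extensions is then automatic, or follows from closure under finite coproducts as in the Remark after Proposition~\ref{p:D-minus-equivalence}). One should also remark that the ``finite left global dimension'' hypothesis on $R$ is the global dimension of $\Mod(R)$, matching the requirement in the theorem. If anything, I might note in passing that by Proposition~\ref{p:D-minus-equivalence} (or the discussion around \eqref{eq:D-minus-mod-Mod}) the category $\mod(R)$ also automatically has finite global dimension, but this is not needed as an input since Theorem~\ref{t:equiv-abelian-subcat-gdim-finite} derives it internally from $\proj(\mathcal{U}) \subset \proj(\mathcal{A})$.

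Thus the proof is a short verification-and-invoke argument. The only mildly delicate ingredient — and the one I would state explicitly — is the equivalence between left coherence of $R$ and the abelian-subcategory property of $\mod(R) \subset \Mod(R)$; everything else (exact coproducts, enough projectives on both sides, finite global dimension, compatibility of projectives) is routine. Having recorded these, one concludes by citing Theorem~\ref{t:equiv-abelian-subcat-gdim-finite} verbatim.
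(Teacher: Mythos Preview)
Your proposal is correct and follows exactly the paper's approach: the paper's proof is a two-sentence invocation of Theorem~\ref{t:equiv-abelian-subcat-gdim-finite}, noting only that $\mod(R)$ has enough projectives since $R\in\mod(R)$. You have simply spelled out the remaining hypothesis checks more explicitly than the paper bothers to.
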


\begin{proof}
  This is a particular case of
  Theorem~\ref{t:equiv-abelian-subcat-gdim-finite}.  Note that
  $\mod(R)$ has enough projective objects since $R \in \mod(R)$.
\end{proof}

\section{Second positive result}
\label{sec:second-positive-result}

Let $\mathcal{A}$ be an abelian category.  Any short exact
sequence $0\ra K\ra L\ra M\ra0$ of complexes in $\mathcal{A}$ can
be considered as a bicomplex with three rows.
\begin{equation*}
  \xymatrix{ 
    {\dots} \ar[r] & K^{-1} \ar[r]\ar[d] & K^0 \ar[r]\ar[d]
    & K^1 \ar[r]\ar[d] & K^2 \ar[r]\ar[d] & {\dots} \\
    {\dots} \ar[r] & L^{-1} \ar[r]\ar[d] & L^0 \ar[r]\ar[d]
    & L^1 \ar[r]\ar[d] & L^2 \ar[r]\ar[d] & {\dots} \\
    {\dots} \ar[r] & M^{-1} \ar[r] & M^0 \ar[r]
    & M^1 \ar[r] & M^2 \ar[r] & {\dots}
  }
\end{equation*}
We consider the total complex of this bicomplex and denote it by
$\Tot(K\ra L\ra M)$.

A complex in $\mathcal{A}$ is said to be \define{absolutely
  acyclic} if it belongs to the minimal thick subcategory of
$\K(\mathcal{A})$ containing all the total complexes
$\Tot(K\ra L\ra M)$ of short exact sequences
$0\ra K\ra L\ra M\ra0$ of complexes in $\mathcal{A}$.  Clearly,
any absolutely acyclic complex is acyclic, but the converse is
not true in general.  One can easily see that any bounded acyclic
complex is absolutely acyclic.

\begin{example}
  \label{ex:acyclic-not-absolutely}
  The unbounded acyclic complex
  \begin{equation*}
    \dots\xra{\epsilon} R\xra{\epsilon} R\xra{\epsilon} R
    \xra{\epsilon}\dots
  \end{equation*}
  of projective-injective modules over the ring of dual numbers
  $R=k[\epsilon]/(\epsilon^2)$ (over any field~$k$) is not
  absolutely acyclic.  Moreover, both intelligent truncations
  \begin{equation*}
    \dots\xra{\epsilon} R\xra{\epsilon} R\xra{\epsilon} R
    \sra k\ra 0 \qquad \text{and} \qquad
    0\ra k \hra R \xra{\epsilon} R\xra{\epsilon} R
    \xra{\epsilon}\dots
  \end{equation*}
  of this complex
  are acyclic, but not absolutely acyclic.
  Similarly, the unbounded acyclic complex
  \begin{equation*}
    \dots\xra{2} R\xra{2} R\xra{2} R
    \xra{2}\dots
  \end{equation*}
  of projective-injective modules over the ring $R=\bZ/4\bZ$ is
  not absolutely acyclic.  The intelligent truncations
  \begin{equation*}
    \dots\xra{2} R\xra{2} R\xra{2} R
    \sra\bZ/2\bZ\ra 0 \qquad \text{and} \qquad
    0\ra \bZ/2\bZ \hra R \xra{2} R\xra{2} R
    \xra{2}\dots
  \end{equation*}
  of this complex
  are again acyclic, but not absolutely acyclic.

  More generally, let $R$ be a quasi-Frobenius ring and let $M$
  be a non-projective (and hence non-injective) $R$-module.  Let
  $\ldots\ra P_2\ra P_1\ra P_0 \xsra{\pi} M \ra 0$ be a
  projective resolution of $M$ and let
  $0\ra M \xhra{\iota} J^0\ra J^1\ra J^2\ra\ldots$ be an
  injective resolution of $M$.  Let $f$ be the composition
  $P_0 \xsra{\pi} M \xhra{\iota} J^0$.  Then the three acyclic
  complexes
  \begin{align}
    \label{eq:unbounded}
    \ldots\ra P_2\ra P_1\ra P_0 \xra{f}
    & J^0\ra J^1\ra J^2\ra\ldots,\\
    \label{eq:bounded-above}
    \ldots\ra P_2\ra P_1\ra P_0\xsra{\pi}
    & M\ra 0,\\
    \label{eq:bounded-below}
    0\ra M\xhra{\iota}
    & J^0\ra J^1\ra J^2\ra\ldots 
  \end{align}
  of $R$-modules are not absolutely acyclic.
  
  To prove these assertions, one can use the notions of coacyclic
  and contraacyclic complexes.  Given an abelian category
  $\mathcal{A}$ with exact coproducts, one says that a complex in
  $\mathcal{A}$ is \define{coacyclic} if it belongs to the
  minimal triangulated subcategory of $\K(\mathcal{A})$
  containing the absolutely acyclic complexes and closed under
  coproducts.  Dually, if an abelian category $\mathcal{A}$ has
  exact products, a complex in $\mathcal{A}$ is said to be
  \define{contraacyclic} if it belongs to the minimal
  triangulated subcategory of $\K(\mathcal{A})$ containing the
  absolutely acyclic complexes and closed under products.  The
  following lemmas summarize some key observations.
  
  \begin{lemma}
    \phantomsection\label{l:second-kind-orthogonality}
    \begin{enumerate}
    \item
      \label{item:KinjA:coacyclic=zero}
      Let $I\in\K(\inj(\mathcal{A}))\subset\K(\mathcal{A})$ be a
      complex of injective objects in an abelian category
      $\mathcal{A}$ with exact coproducts.  Then $I$ is coacyclic
      if and only if it is contractible.
    \item
      \label{item:KinjA:contraacyclic=zero}
      Let $P\in\K(\proj(\mathcal{A}))\subset\K(\mathcal{A})$ be a
      complex of projective objects in an abelian category
      $\mathcal{A}$ with exact products.  Then $P$ is
      contraacyclic if and only if it is contractible.
    \end{enumerate}
  \end{lemma}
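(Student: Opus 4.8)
The plan is to deduce part~(b) from part~(a) by passing to the opposite category: replacing $\mathcal{A}$ by $\mathcal{A}^{\mathrm{op}}$ interchanges $\proj(\mathcal{A})$ with $\inj(\mathcal{A})$, products with coproducts (and exactness of the one with exactness of the other), and contraacyclicity with coacyclicity, whereas the classes of absolutely acyclic and of contractible complexes are self-dual and $\K(\mathcal{A})^{\mathrm{op}}\cong\K(\mathcal{A}^{\mathrm{op}})$. So I shall only treat part~(a). The ``if'' direction is trivial: a contractible complex is zero in $\K(\mathcal{A})$, hence lies in the thick subcategory of absolutely acyclic complexes, hence is coacyclic. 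The content is the ``only if'' direction, for which I would first isolate the orthogonality statement: \emph{for every complex $I$ of injective objects and every coacyclic complex $A$, one has $\Hom_{\K(\mathcal{A})}(A,I)=0$.} Granting this, apply it with $A=I$ (legitimate since $I$ is coacyclic by hypothesis): then $\id_I=0$ in $\K(\mathcal{A})$, i.e.\ $I$ is contractible, as desired.

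To prove the orthogonality statement, let $\mathcal{I}$ be the class of all complexes of injective objects --- note that $\mathcal{I}$ is stable under the shift --- and put
\begin{equation*}
  \mathcal{E}:=\{\,A\in\K(\mathcal{A})\mid\Hom_{\K(\mathcal{A})}(A,I)=0\text{ for all }I\in\mathcal{I}\,\}.
\end{equation*}
Since $\Hom_{\K(\mathcal{A})}(-,I)$ is a cohomological functor and $\mathcal{I}$ is shift-stable, $\mathcal{E}$ is a triangulated subcategory of $\K(\mathcal{A})$; it is thick because $\Hom_{\K(\mathcal{A})}(A\oplus B,I)=\Hom_{\K(\mathcal{A})}(A,I)\oplus\Hom_{\K(\mathcal{A})}(B,I)$; and it is closed under arbitrary coproducts because $\Hom_{\K(\mathcal{A})}(\coprod_j A_j,I)=\prod_j\Hom_{\K(\mathcal{A})}(A_j,I)$. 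Hence it is enough to show that $\Tot(K\ra L\ra M)\in\mathcal{E}$ for every short exact sequence $0\ra K\ra L\ra M\ra0$ of complexes in $\mathcal{A}$: from this, thickness of $\mathcal{E}$ forces $\mathcal{E}$ to contain all absolutely acyclic complexes (by their definition as the smallest thick subcategory containing such total complexes), and then the fact that $\mathcal{E}$ is triangulated and closed under coproducts forces it to contain all coacyclic complexes, which is the orthogonality statement.

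So fix such a short exact sequence and a complex $I$ of injective objects, and write $\Hom^\bullet_{\mathcal{A}}(X,Y)$ for the usual Hom-complex of abelian groups, so that $\HH^0\bigl(\Hom^\bullet_{\mathcal{A}}(X,Y)\bigr)=\Hom_{\K(\mathcal{A})}(X,Y)$. Applying the exact functors $\Hom_{\mathcal{A}}(-,I^j)$ degreewise, and using that a product of short exact sequences of abelian groups is exact, I obtain a short exact sequence of complexes of abelian groups
\begin{equation*}
  0\ra\Hom^\bullet_{\mathcal{A}}(M,I)\ra\Hom^\bullet_{\mathcal{A}}(L,I)\ra\Hom^\bullet_{\mathcal{A}}(K,I)\ra0 .
\end{equation*}
Unravelling the definitions, one checks that $\Hom^\bullet_{\mathcal{A}}\bigl(\Tot(K\ra L\ra M),I\bigr)$ coincides, up to signs of the differentials, with the total complex of this short exact sequence of complexes; the only point to verify is that, although $\Hom^\bullet_{\mathcal{A}}(-,I)$ sends a coproduct-total complex to a product-total complex in general, here each total degree receives contributions from only three rows, so the two coincide. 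Finally, the total complex of any short exact sequence of complexes is acyclic --- this is the mapping-cone computation underlying the long exact cohomology sequence of a short exact sequence of complexes. Therefore $\Hom^\bullet_{\mathcal{A}}\bigl(\Tot(K\ra L\ra M),I\bigr)$ is acyclic, and taking $\HH^0$ yields $\Hom_{\K(\mathcal{A})}\bigl(\Tot(K\ra L\ra M),I\bigr)=0$, which is what remained to be shown.

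The conceptual reason the statement must mention coacyclic (rather than merely acyclic) complexes is that a complex of injective objects need not be h-injective, so there is no shortcut through $\D(\mathcal{A})$ via acyclicity of $\Tot(K\ra L\ra M)$; the argument genuinely lives in $\K(\mathcal{A})$ and uses closure under coproducts. The only step that I expect to need real care in a full write-up is the identification of $\Hom^\bullet_{\mathcal{A}}(\Tot(K\ra L\ra M),I)$ with the total complex of the above short exact sequence, where the index and sign conventions have to be pinned down explicitly; the rest is formal.
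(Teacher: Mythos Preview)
Your proof is correct and follows essentially the same strategy as the paper: establish the orthogonality statement $\Hom_{\K(\mathcal{A})}(A,I)=0$ for all coacyclic $A$ and all complexes $I$ of injectives, then specialize to $A=I$. The only difference is that the paper cites this orthogonality from \cite{positselski-two-kinds-of-derived} and \cite{positselski-contraherent-cosheaves}, whereas you supply a self-contained proof via the Hom-complex argument; your proof of that statement is the standard one and your index/sign caveat is the right place to be careful.
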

  
  \begin{proof}
    Being contractible means being zero in
    $\K(\mathcal{A})$. Hence any contractible complex is
    coacyclic resp.\ contraacyclic.  The reverse implications
    follow
    from~\cite[Section~3.5]{positselski-two-kinds-of-derived}
    (or~\cite[Lemma~A.1.3]{positselski-contraherent-cosheaves}):
    In part~\ref{item:KinjA:coacyclic=zero}, we have
    $\Hom_{\K(\mathcal{A})}(A,I)=0$ for all coacyclic complexes
    $A$ in $\mathcal{A}$; 
    if $I$ is coacyclic, take $A=I$.
    In
    part~\ref{item:KinjA:contraacyclic=zero}, we have
    $\Hom_{\K(\mathcal{A})}(P,B)=0$ for all contraacyclic
    complexes $B$ in $\mathcal{A}$;
    if $P$ is contraacyclic, take $B=P$.
  \end{proof}

  \begin{lemma}
    \phantomsection\label{l:second-kind-bounded}
    \begin{enumerate}
    \item
      Any bounded below acyclic complex in $\mathcal{A}$ is
      coacyclic.
    \item
      Any bounded above acyclic complex in $\mathcal{A}$ is
      contraacyclic.
    \end{enumerate}
  \end{lemma}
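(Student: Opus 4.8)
The plan is to prove part~(a) directly and to deduce part~(b) by passing to the opposite category. Indeed, $\mathcal{A}$ has exact products if and only if $\mathcal{A}^{\mathrm{op}}$ has exact coproducts; a bounded above acyclic complex in $\mathcal{A}$ is the same thing as a bounded below acyclic complex in $\mathcal{A}^{\mathrm{op}}$; total complexes of short exact sequences of complexes correspond to total complexes of short exact sequences; and, since the definition of contraacyclicity is exactly the definition of coacyclicity read off in $\K(\mathcal{A})^{\mathrm{op}}\simeq\K(\mathcal{A}^{\mathrm{op}})$, the contraacyclic complexes in $\mathcal{A}$ correspond to the coacyclic complexes in $\mathcal{A}^{\mathrm{op}}$. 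Thus part~(b) for $\mathcal{A}$ is literally part~(a) for $\mathcal{A}^{\mathrm{op}}$.

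For part~(a), let $A$ be a bounded below acyclic complex; we may assume $A^i=0$ for $i<N$. First I would exhibit $A$ as an increasing union of bounded acyclic complexes. For $n\geq N$, let $A_n\subseteq A$ be the subcomplex that agrees with $A$ in degrees $\leq n$, has $A_n^{n+1}:=\image(d_A^n)=\kernel(d_A^{n+1})\subseteq A^{n+1}$ in degree $n+1$ (the two subobjects coincide since $A$ is acyclic), and vanishes in degrees $>n+1$. A routine cohomology computation shows that each $A_n$ is a bounded acyclic complex, hence absolutely acyclic, hence coacyclic. The inclusions $A_N\hookrightarrow A_{N+1}\hookrightarrow\dots$ are monomorphisms of complexes that become the identity in each fixed degree once $n$ is large enough, and $A=\colim_n A_n$ in the category of complexes in $\mathcal{A}$.

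Then I would feed this into the telescope short exact sequence of complexes
\begin{equation*}
  0\to\bigoplus_{n\geq N}A_n\xra{1-s}\bigoplus_{n\geq N}A_n\to A\to 0,
\end{equation*}
where $s$ is the shift induced by the inclusions $A_n\hookrightarrow A_{n+1}$. By definition its total complex, regarded as a three-row bicomplex, is absolutely acyclic, hence coacyclic. On the other hand, for any short exact sequence $0\to K\to L\to M\to 0$ of complexes there is a natural quasi-isomorphism $\Cone(K\to L)\to M$ whose cone is, up to a shift, the total complex $\Tot(K\to L\to M)$, so that, up to shifts, one has a triangle
\begin{equation*}
  \Cone(K\to L)\to M\to\Tot(K\to L\to M)\to\Sigma\Cone(K\to L)
\end{equation*}
in $\K(\mathcal{A})$. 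Applying this with $K=L=\bigoplus_n A_n$ and $M=A$: the complex $\bigoplus_n A_n$ is coacyclic, being a coproduct of coacyclic complexes, hence so is the cone of $1-s$ in $\K(\mathcal{A})$ (a cone of a morphism inside the triangulated subcategory of coacyclic complexes), and $\Tot$ of the sequence is coacyclic as well; therefore the third vertex $A$ of the triangle is coacyclic, as wanted. The only point where one has to be a little careful is the passage from the short exact sequence to the displayed triangle --- that is, the fact that short exact sequences of complexes become triangles in the coderived (resp.\ contraderived) category --- which requires a small argument precisely because the telescope sequence is \emph{not} degreewise split; everything else, namely the acyclicity of each $A_n$ and the exactness of the telescope sequence, is routine, and I do not expect a serious obstacle.
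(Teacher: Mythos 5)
Your proof is correct, but it is genuinely different in character from what the paper does: the paper gives no argument at all for this lemma and simply cites \cite[Lemmas in sections~2.1 and~4.1]{positselski-homological-semimodules} (see also \cite[Theorem~1(a), section~3.4]{positselski-two-kinds-of-derived}), whereas you supply a self-contained proof --- which is, in substance, the standard argument from those references. Your reduction of (b) to (a) via $\mathcal{A}^{\mathrm{op}}$ is fine, the subcomplexes $A_n$ (agreeing with $A$ up to degree $n$, with $\image(d^n)=\kernel(d^{n+1})$ in degree $n+1$) are indeed bounded acyclic, hence absolutely acyclic, hence coacyclic, and the telescope sequence together with the observation that $\Tot$ of a short exact sequence of complexes is, up to shift, the cone of the canonical map $\Cone(1-s)\to A$ correctly forces $A$ to lie in the triangulated subcategory of coacyclic complexes. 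Two small remarks. First, your parenthetical worry is misplaced: the telescope sequence here \emph{is} degreewise split, because in each fixed degree the system $A_n^i$ stabilizes (with identity transition maps) so that the projection onto the colimit admits an obvious section; this is exactly the situation exploited in the proof of Proposition~\ref{p:essentially-surjective} of the paper. Second, even if it were not split, no extra argument would be needed at the point you flag: the triangle you use is just the standard triangle of the closed morphism $\Cone(1-s)\to A$ in $\K(\mathcal{A})$, and the identification of its cone with $\Sigma\Tot$ is a termwise inspection of the three-row totalization, valid for an arbitrary short exact sequence of complexes; the quasi-isomorphism property of $\Cone(1-s)\to A$ is likewise never needed. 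So the only genuine inputs are the existence of countable coproducts, the fact (stated just before Example~\ref{ex:acyclic-not-absolutely}) that bounded acyclic complexes are absolutely acyclic, and closure of coacyclic complexes under coproducts and cones, all of which you use correctly.
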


  \begin{proof}
    This is a particular case of~\cite[Lemmas in sections~2.1
    and~4.1]{positselski-homological-semimodules} (see
    also~\cite[Theorem~1(a) in
    section~3.4]{positselski-two-kinds-of-derived}).
  \end{proof}

  Now we can show that the three acyclic
  complexes~\eqref{eq:unbounded}, \eqref{eq:bounded-above},
  \eqref{eq:bounded-below} are not absolutely acyclic. Convince
  yourself that, given any contractible complex $A$ in an abelian
  category, all monomorphisms $\B^n(A) \hra A^n$ and all
  epimorphisms $A^{n-1} \sra \B^n(A)$ split.  Since $M$ is not
  projective, $P_0 \xsra{\pi} M$ does not split (similarly,
  $M \xhra{\iota} J^0$ does not split since $M$ is not
  injective). Hence the complex \eqref{eq:unbounded} is not
  contractible. Since all its terms are both projective and
  injective, Lemma~\ref{l:second-kind-orthogonality} shows that
  it is neither coacyclic nor contraacyclic.

  Consider the obvious map
  $\eqref{eq:bounded-above} \ra \eqref{eq:unbounded}$ of
  complexes. Its mapping cone is clearly homotopic to
  \eqref{eq:bounded-below}. Hence there is a triangle
  $\eqref{eq:bounded-above} \ra \eqref{eq:unbounded} \ra
  \eqref{eq:bounded-below} \ra \Sigma \eqref{eq:bounded-above}$
  in $\K(\mathcal{A})$.  By
  Lemma~\ref{l:second-kind-bounded}(a), the
  complex~\eqref{eq:bounded-below} is coacyclic.  Since the
  complex~\eqref{eq:unbounded} is not coacyclic, the
  complex~\eqref{eq:bounded-above} cannot be coacyclic.  Dually,
  by Lemma~\ref{l:second-kind-bounded}(b), the
  complex~\eqref{eq:bounded-above} is contraacyclic.  Since the
  complex~\eqref{eq:unbounded} is not contraacyclic, the
  complex~\eqref{eq:bounded-below} cannot be contraacyclic.

  By definition, all absolutely acyclic complexes are both
  coacyclic and contraacyclic.  Thus none
  of the complexes~\eqref{eq:unbounded}, \eqref{eq:bounded-above},
  \eqref{eq:bounded-below} is absolutely acyclic.
\end{example}

Denote the thick subcategory of absolutely acyclic complexes by
$\Ac^\abs(\mathcal{A})\subset\K(\mathcal{A})$.  The triangulated
Verdier quotient category
\begin{equation*}
  \D^\abs(\mathcal{A})=\K(\mathcal{A})/\Ac^\abs(\mathcal{A})
\end{equation*}
is called the \define{absolute derived category} of the abelian
category $\mathcal{A}$.

Since all absolutely acyclic complexes are acyclic, we have a
canonical (triangulated Verdier quotient) functor
\begin{equation*}
  \D^\abs(\mathcal{A})\sra\D(\mathcal{A}).
\end{equation*}
In fact, we obtain a commutative triangular diagram of
triangulated Verdier quotient functors
$\K(\mathcal{A})\sra\D^\abs(\mathcal{A}) \sra\D(\mathcal{A})$.

\begin{remark}
  \label{r:gdim-finite-acyclic=absolute-acyclic}
  We have already mentioned that any bounded acyclic complex is
  absolutely acyclic.  In an abelian category of finite global
  dimension, any acyclic complex is absolutely acyclic
  \cite[Remark in
  section~2.1]{positselski-homological-semimodules}.  This means
  that the canonical functor
  $\D^\abs(\mathcal{A})\sra\D(\mathcal{A})$ is an equivalence
  whenever $\mathcal{A}$ has finite global dimension.
\end{remark}

\begin{theorem}
  \label{t:absolute-derived-fully-faithful}
  Let $\mathcal{A}$ be an abelian category and let
  $\mathcal{U}\subset \mathcal{A}$ be a non-empty, full
  subcategory closed under subobjects, quotients, and extensions.
  Assume that for any epimorphism $A\sra U$ in $\mathcal{A}$ with
  $U\in\mathcal{U}$ there exists a subobject $V\subset A$ with
  $V \in \mathcal{U}$ such that the composition $V \hra A \sra U$
  is an epimorphism.  Then the obvious triangulated functor
  \begin{equation*}
    \D^\abs(\mathcal{U})\ra\D^\abs(\mathcal{A})    
  \end{equation*}
  is fully faithful.
\end{theorem}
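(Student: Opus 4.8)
The plan is to factor the functor $\D^\abs(\mathcal{U})\to\D^\abs(\mathcal{A})$ through the quotient by a suitable subcategory and use a calculus-of-fractions argument, but the cleanest route is to construct an explicit adjoint on the level of homotopy categories. First I would recall the standard criterion: a triangulated functor $\D^\abs(\mathcal{U})\to\D^\abs(\mathcal{A})$ induced by an exact functor $\mathcal{U}\to\mathcal{A}$ is fully faithful if and only if every morphism in $\K(\mathcal{A})$ from a complex in $\mathcal{U}$ to a complex in $\mathcal{U}$ that factors through an absolutely acyclic complex of $\mathcal{A}$ already factors through an absolutely acyclic complex of $\mathcal{U}$, and every object of $\D^\abs(\mathcal{A})$ receiving a map from an object of $\mathcal{U}$-complexes via a roof can have that roof replaced by one inside $\K(\mathcal{U})$. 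The key geometric input is the hypothesis: for any epimorphism $A\sra U$ with $U\in\mathcal{U}$ there is a subobject $V\subset A$ with $V\in\mathcal{U}$ mapping onto $U$. This is exactly the ingredient that, as in the proof of Proposition~\ref{p:D-minus-equivalence}, lets one resolve complexes in $\D_{\mathcal{U}}(\mathcal{A})$ by subcomplexes in $\mathcal{U}$; here I would use it in the \emph{bounded} setting, where it is unobstructed.

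Concretely, the main step is to show: given a short exact sequence $0\to K\to L\to M\to0$ of complexes in $\mathcal{A}$ whose total complex is one of the generators of $\Ac^\abs(\mathcal{A})$, and given a morphism in $\K(\mathcal{A})$ from a complex $X$ with terms in $\mathcal{U}$ into $\Tot(K\to L\to M)$, one can approximate from inside $\mathcal{U}$. The right way to organize this is to first reduce to bounded complexes: any morphism $X\to Y$ in $\K(\mathcal{A})$ with $X,Y$ having terms in $\mathcal{U}$, which becomes zero in $\D^\abs(\mathcal{A})$, factors as $X\to A\to Y$ with $A$ absolutely acyclic; by the definition of $\Ac^\abs$ as the minimal \emph{thick} subcategory generated by totalizations of short exact sequences, and since $X$ and $Y$ can be replaced by their truncations when testing a fixed morphism (the statement to prove is about a single Hom-group), one localizes attention to a bounded window. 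Inside a bounded window, the hypothesis on $\mathcal{U}$ together with the Remark following Proposition~\ref{p:D-minus-equivalence} (closure under finite coproducts plus the lifting property forces closure under extensions, and one gets subcomplexes in $\mathcal{U}$) produces bounded absolutely acyclic complexes \emph{in $\mathcal{U}$} through which the relevant maps factor. I would make this precise by resolving the three-term short exact sequence $0\to K\to L\to M\to0$ of $\mathcal{A}$-complexes fibrewise, pulling back successively, to obtain a short exact sequence $0\to K'\to L'\to M'\to0$ of complexes in $\mathcal{U}$ together with a termwise-surjective map to the original, whose totalization maps to the original totalization compatibly with the given map out of $X$.

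For full faithfulness I would verify both halves of the roof calculus. Faithfulness: a map $X\to Y$ in $\K(\mathcal{U})$ dying in $\D^\abs(\mathcal{A})$ factors through $A\in\Ac^\abs(\mathcal{A})$; by the reduction above, replace $A$ by $A'\in\Ac^\abs(\mathcal{U})$, so the map dies already in $\D^\abs(\mathcal{U})$. Fullness: a morphism in $\D^\abs(\mathcal{A})$ between images of $X,Y\in\K(\mathcal{U})$ is a roof $X\xleftarrow{s}Z\to Y$ with $s$ an $\abs$-quasi-isomorphism in $\K(\mathcal{A})$, i.e.\ $\Cone(s)\in\Ac^\abs(\mathcal{A})$; using the hypothesis I build a complex $Z'$ with terms in $\mathcal{U}$ and a map $Z'\to Z$ such that $Z'\to X$ is an $\abs$-quasi-isomorphism in $\K(\mathcal{U})$ and the composite $Z'\to Z\to Y$ lands (up to homotopy) in $\mathcal{U}$—this is where the subobject-lifting hypothesis does the real work, exactly mimicking the inductive construction of a $\mathcal{U}$-subcomplex in the proof of Proposition~\ref{p:D-minus-equivalence}, but now keeping careful track of absolute acyclicity of cones (bounded acyclic complexes are automatically absolutely acyclic, which is what makes the truncations harmless). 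The main obstacle I anticipate is precisely the bookkeeping that ensures the cones of the approximating maps stay \emph{absolutely} acyclic rather than merely acyclic; this is handled by keeping all modifications within bounded windows and invoking that bounded acyclic complexes are absolutely acyclic, plus the fact that $\Ac^\abs$ is thick so finite iterations and direct summands are permitted.
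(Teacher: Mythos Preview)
Your proposal correctly identifies the criterion the paper itself uses: it suffices to show that every morphism in $\K(\mathcal{A})$ from a complex with terms in $\mathcal{U}$ to an object of $\Ac^\abs(\mathcal{A})$ factors through an object of $\Ac^\abs(\mathcal{U})$. The gap is in your execution, specifically the step where you ``reduce to bounded complexes'' and ``localize attention to a bounded window.'' Truncation simply does not interact with absolute acyclicity: the generators $\Tot(K\to L\to M)$ are built from \emph{unbounded} complexes, and truncating such a totalization gives something that is in general neither acyclic nor absolutely acyclic. Replacing $X$ or $Y$ by a truncation is equally illegitimate in $\D^\abs$, since the cone of $\tau_{<n}X\to X$ has no reason to lie in $\Ac^\abs$ (indeed Example~\ref{ex:acyclic-not-absolutely} exhibits acyclic complexes whose truncations are acyclic but not absolutely acyclic). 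So the appeal to ``bounded acyclic complexes are absolutely acyclic'' never fires, and your argument does not get past the generators, let alone through the cone and retract closure.

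The paper avoids boundedness entirely by introducing an auxiliary notion stable under the inductive generation of $\Ac^\abs(\mathcal{A})$: a complex $A$ is called \emph{strongly approachable} if every subcomplex $D\subset A$ with terms in $\mathcal{U}$ is contained in a larger subcomplex $E\subset A$ with $E\in\Ac^\abs(\mathcal{U})$. One checks by hand that each $\Tot(K\to L\to M)$ is strongly approachable (this is where the subobject-lifting hypothesis is used, roughly as you anticipated), and then---this is the step your outline lacks---that the cone of any morphism between strongly approachable complexes is again strongly approachable. That stability under cones is what carries the property through the whole thick subcategory; the final passage to homotopy equivalents and direct summands is then trivial. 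Your idea of producing a short exact sequence $0\to K'\to L'\to M'\to0$ in $\mathcal{U}$ mapping to the original one is close in spirit to the generator step, but without the cone-stability mechanism it cannot be iterated.
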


\begin{remark}
  We do not know how to describe the essential image of this
  functor
  in a nice way.
  In general, it is \emph{not} true that all complexes
  in $\mathcal{A}$ whose cohomology objects belong to
  $\mathcal{U}$ are in the essential image: Let $\mathcal{A}$ be
  an abelian category such that there are acyclic complexes that
  are not absolutely acyclic (cf.\
  Example~\ref{ex:acyclic-not-absolutely}) and consider the
  subcategory $\mathcal{U}=0$.
\end{remark}

\begin{proof}
  We introduce some additional terminology in order to formulate
  the argument.  First of all, the objects of
  $\Ac^\abs(\mathcal{U})$ will be called \define{absolutely
    acyclic complexes with respect to $\mathcal{U}$}, the objects
  of $\Ac^\abs(\mathcal{A})$ will be called \define{absolutely
    acyclic complexes with respect to $\mathcal{A}$}.  Clearly,
  any absolutely acyclic complex with respect to $\mathcal{U}$ is
  also absolutely acyclic with respect to $\mathcal{A}$.
  Conversely, it follows from the assertion of the theorem that a
  complex in $\mathcal{U}$ that is absolutely acyclic with
  respect to $\mathcal{A}$ is already absolutely acyclic with
  respect to $\mathcal{U}$.  So
  $\Ac^\abs(\mathcal{U}) =
  \Ac^\abs(\mathcal{A})\cap\K(\mathcal{U})$.  But this is not
  obvious and needs to be proved.
  
  We will show that any morphism in $\K(\mathcal{A})$ from a
  complex in $\mathcal{U}$ to a complex absolutely acyclic with
  respect to $\mathcal{A}$ factors through a complex absolutely
  acyclic with respect to $\mathcal{U}$.  This is clearly
  sufficient to prove the theorem (see e.\,g.\
  \cite[Proposition~B.2(II)]{lunts-schnuerer-mf-sod}).

  The full subcategory $\Ac^\abs(\mathcal{A})$ of absolutely
  acyclic complexes in $\K(\mathcal{A})$ is defined by a
  generation procedure, and the argument proceeds along the steps
  of this procedure.  Let us describe this procedure in more
  detail.  We start with the empty subcategory of
  $\K(\mathcal{A})$ and successively add objects.
  \begin{enumerate}[label=Step \arabic*.]
  \item All total complexes $\Tot(K\ra L\ra M)$ of short
    exact sequences $0\ra K\ra L\ra M\ra 0$ of complexes in
    $\mathcal{A}$ are added.
  \item If two complexes $A$ and $B$ have been added in the
    previous steps, and if $t\colon A\ra B$ is a morphism of
    complexes, then we add the complex $C=\Cone(t)$.  Let us
    emphasize that $\Cone(t)$ is understood here as the cone of a
    closed morphism of complexes; so there exists a short exact
    sequence $0\ra B\ra C\ra\Sigma(A)\ra0$ in the abelian
    category of complexes in $\mathcal{A}$.  All the complexes
    $C$ that can be obtained in this way are added in this step.
  \item Step 2 is repeated countably many times.
  \item If a complex $A$ has been added already and a complex $B$
    is homotopy equivalent to $A$, then the complex $B$ is added
    in this step.
  \item If a complex $A$ has been added already and a complex $B$
    is a direct summand of $A$ in $\K(\mathcal{A})$, then the
    complex $B$ is added in this step.
  \end{enumerate}
  After all the steps 1--5 have been run, we have precisely
  produced the full subcategory
  $\Ac^\abs(\mathcal{A})\subset \K(\mathcal{A})$.

  Let us say that a complex $A\in\K(\mathcal{A})$ is
  \define{approachable} if any morphism $f\colon U\ra A$ in
  $\K(\mathcal{A})$ with $U\in\K(\mathcal{U})$ factors through a
  complex absolutely acyclic with respect to $\mathcal{U}$.
  Moreover, we will say that a complex $A$ is \define{strongly
    approachable} if, for any subcomplex $D\subset A$ with terms
  $D^n\in\mathcal{U}$, $n\in\bZ$, there exists a subcomplex
  $E\subset A$ such that $D\subset E$ and the complex $E$ is
  absolutely acyclic with respect to $\mathcal{U}$.
  Since $\mathcal{U}$ is closed under quotients,
  any
  strongly approachable complex is approachable.  Our aim is to
  show that all complexes $A\in\Ac^\abs(\mathcal{A})$ are
  approachable.

  Approachability is a concept defined on the level of
  homotopy categories.
  Strong approachability is a concept defined
  on the level of
  abelian categories of complexes.
  In the
  above generation procedure, steps 1--3 take place on the
  abelian level.  Steps 4--5 take place on the level of the
  homotopy category.

  The proof of the theorem now proceeds as a sequence of lemmas.

  \begin{lemma}
    \label{l:totalizations}
    For any short exact sequence $0\ra K\ra L\ra M\ra0$ of
    complexes in $\mathcal{A}$, the complex $A=\Tot(K\ra L\ra M)$
    is strongly approachable.
  \end{lemma}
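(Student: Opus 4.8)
The plan is to prove strong approachability directly. Given a subcomplex $D\subseteq A=\Tot(K\ra L\ra M)$ with every term $D^n$ in $\mathcal{U}$, we will construct a sub-bicomplex $0\ra K'\ra L'\ra M'\ra 0$ of the three-row bicomplex associated to $0\ra K\ra L\ra M\ra0$, with all terms in $\mathcal{U}$, whose totalization $E:=\Tot(K'\ra L'\ra M')$ still contains $D$. A sub-bicomplex totalizes to a subcomplex, so $E\subseteq A$; and $E$ is absolutely acyclic with respect to $\mathcal{U}$ by the very definition of $\Ac^\abs(\mathcal{U})$, since $0\ra K'\ra L'\ra M'\ra0$ is a short exact sequence of complexes in $\mathcal{U}$. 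This proves the lemma.

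First we set up the bookkeeping. In each degree $A^n$ is, in the obvious way, the direct sum of one term of $K$, one of $L$ and one of $M$, in appropriately shifted cohomological degrees; write $\pi_K,\pi_L,\pi_M$ for the three projections out of $A$. Only $\pi_K$, and the inclusion of the $M$-row into $A$, are honest morphisms of complexes; hence $P:=\pi_K(D)$ is a \emph{subcomplex} of $K$, with terms in $\mathcal{U}$ (they are quotients of the $D^n$), whereas $\pi_L(D^n)$ and $\pi_M(D^n)$ lie in $\mathcal{U}$ only term by term. The design goal is to choose the middle row $L'\subseteq L$ so that: (i) $L'$ contains $f(P)$ and the (reindexed) graded subobject $n\mapsto\pi_L(D^n)$; and (ii) the subcomplex $g(L')\subseteq M$ contains $\pi_M(D^n)$ in every degree (equivalently, the subcomplex of $M$ generated by $\pi_M(D)$). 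Granting such an $L'$ with terms in $\mathcal{U}$, we set $K':=f^{-1}(L')$ and $M':=g(L')$. Then $K'$ is a subcomplex of $K$ each of whose terms maps isomorphically, via $f$, onto a subobject of the corresponding term of $L'$, hence lies in $\mathcal{U}$; and $M'$ is a subcomplex of $M$ whose terms are quotients of those of $L'$, hence also in $\mathcal{U}$. The sequence $0\ra K'\xra{f}L'\xra{g}M'\ra0$ is short exact because $f$ is a monomorphism, $g|_{L'}$ is an epimorphism onto $M'=g(L')$ by construction, and $\ker(g|_{L'})=L'\cap\ker g=L'\cap f(K)=f(K')$. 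Finally $D\subseteq E$ follows from (i) and (ii) together with the elementary fact that a subobject of a finite direct sum is contained in a sub-direct-sum $\bigoplus_i X_i$, with $X_i$ a subobject of the $i$-th summand, exactly when each projection carries it into the corresponding $X_i$.

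The construction of $L'$ is the only real point, and it is here that the hypothesis of the theorem is used. Requirement (i) is harmless. For (ii) one would like $L'$ to contain a $g$-preimage of each $\pi_M(D^n)$; but the full preimage $g^{-1}(\pi_M(D^n))\subseteq L^n$ sits in a short exact sequence $0\ra K^n\ra g^{-1}(\pi_M(D^n))\ra\pi_M(D^n)\ra0$ in which $K^n$ need not belong to $\mathcal{U}$. Instead we apply the theorem's assumption to the epimorphism $g^{-1}(\pi_M(D^n))\sra\pi_M(D^n)$ --- a pullback of the epimorphism $g^n$, hence an epimorphism, onto an object of $\mathcal{U}$ --- obtaining a subobject $V^n\subseteq L^n$ with $V^n\in\mathcal{U}$ and $g(V^n)=\pi_M(D^n)$. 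We then let $L'$ be the smallest subcomplex of $L$ containing $f(P)$, the graded subobject $\pi_L(D)$, and all the $V^n$. Every term of such a subcomplex is a finite sum of a generator in the given degree and the $d_L$-image of a generator in the previous degree, hence a finite sum of quotients of objects of $\mathcal{U}$; since $\mathcal{U}$ is closed under quotients and, being closed under extensions, under finite coproducts, every term of $L'$ lies in $\mathcal{U}$. This $L'$ has properties (i) and (ii), completing the construction.

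The remaining verifications are routine and present no obstacle: that $\Tot(K\ra L\ra M)$ has the stated degreewise decomposition and that $\pi_K$, but not $\pi_L$, is a chain map (so that $P$ is genuinely a subcomplex); that a sub-bicomplex totalizes to a subcomplex of the total complex; and the handful of manipulations with images, preimages and kernels used above. The one step that must be handled with care is the passage, via the theorem's hypothesis, from the possibly-too-large preimage $g^{-1}(\pi_M(D^n))$ to a subobject $V^n\in\mathcal{U}$ with the same image under $g$.
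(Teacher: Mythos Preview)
Your proof is correct and follows essentially the same route as the paper's: project $D$ onto the three rows, use the hypothesis to lift the $M$-projections to subobjects of $L$ lying in $\mathcal{U}$, take the smallest subcomplex $L'\subseteq L$ containing these lifts together with $f(\pi_K(D))$ and $\pi_L(D)$, and set $K'=f^{-1}(L')$, $M'=g(L')$. The paper's objects $H^n$, $D_L^n$, $G^n$, $Q^n$, $V^n$, $U$, $W$ correspond exactly to your $f(P)^n$, $\pi_L(D^n)$, $V^n$, their sum, $L'^n$, $K'$, $M'$; your exposition is somewhat more detailed (for instance, you spell out that the full preimage $g^{-1}(\pi_M(D^n))$ need not lie in $\mathcal{U}$ and make explicit why a finite sum of subobjects from $\mathcal{U}$ stays in $\mathcal{U}$), but the construction is the same.
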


  \begin{proof}
    We have $A^n=K^{n+1}\oplus L^n\oplus M^{n-1}$ for all
    $n\in\bZ$.  Let $D\subset A$ be a subcomplex with
    terms
    $D^n\in\mathcal{U}$.  Denote by $D_K^{n+1}\subset K^{n+1}$,
    $D_L^n\subset L^n$, and $D_M^{n-1}\subset M^{n-1}$ the images
    of the projections of the subobject $D^n\subset A^n$ onto the
    three direct summands of the object $A^n$.  So we have
    $D^n\subset D_K^{n+1}\oplus D_L^n\oplus D_M^{n-1}$ and all
    three objects $D_K^{n+1}$, $D_L^n$, $D_M^{n-1}$ belong to
    $\mathcal{U}$.

    By assumption, for each $n\in\bZ$ we can choose a subobject
    $G^n\subset L^n$ such that $G^n\in\mathcal{U}$ and such that
    the image of $G^n$ under the epimorphism $L^n\sra M^n$
    coincides with $D_M^n\subset M^n$.  Denote by
    $H^n\subset L^n$ the image of the composition of
    monomorphisms $D_K^n\hra K^n\hra L^n$.  Put
    $Q^n=H^n+D_L^n+G^n\subset L^n$ and
    $V^n=Q^n+d(Q^{n-1})\subset L^n$.  Clearly, we have
    $Q^n\in\mathcal{U}$ and consequently $V^n\in \mathcal{U}$.

    Now $V$ is a subcomplex of $L$.  Denote by $U$ the preimage
    of $V$ under the monomorphism of complexes $K\hra L$, and by
    $W$ the image of $V$ under the epimorphism of complexes
    $L\sra M$.  Then we have a short exact sequence
    $0\ra U\ra V\ra W\ra0$ of complexes in $\mathcal{U}$.  The
    complex $E=\Tot(U\ra V\ra W)$ is absolutely acyclic with
    respect to $\mathcal{U}$, and it is a subcomplex in $A$.
    Finally, we have $D^n_K\subset U^n\subset K^n$,
    $D^n_L\subset V^n \subset L^n$, and
    $D^n_M\subset W^n\subset M^n$, hence $D\subset E\subset A$,
    as desired.
  \end{proof}

  \begin{lemma}
    \label{l:cones}
    If $t\colon A\ra B$ is a morphism of complexes in
    $\mathcal{A}$ and the two complexes $A$ and $B$ are strongly
    approachable, then the complex $C=\Cone(t)$ is strongly
    approachable.
  \end{lemma}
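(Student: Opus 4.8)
The plan is to reduce strong approachability of $C=\Cone(t)$ to strong approachability of $A$ and of $B$; the subtle point is the \textbf{order} in which these two hypotheses are invoked. Recall that $C^n=B^n\oplus A^{n+1}$, that the differential of $C$ sends $(b,a)$ to $(d_Bb+ta,\,-d_Aa)$, and that $C$ sits in a degreewise split short exact sequence $0\ra B\ra C\xra{\pi}\Sigma A\ra0$ of complexes. Given a subcomplex $D\subset C$ with all terms $D^n\in\mathcal{U}$, I first pass to the two projections: let $D_B^n\subset B^n$ be the image of $D^n$ under $C^n\sra B^n$, and let $D_A:=\pi(D)\subset\Sigma A$, so that $D_A$ in degree $n$ is the image of $D^n$ under $C^n\sra A^{n+1}$. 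Both $D_B$ and $D_A$ have terms in $\mathcal{U}$ since $\mathcal{U}$ is closed under quotients, and $D^n\subset D_B^n\oplus D_A^n$ inside $C^n$. Inspecting the $A$-component of the differential of $C$ (an element $d_C(b,a)\in D^{n+1}$ has $A$-component $-d_Aa$) shows that $D_A$ is genuinely a subcomplex of $\Sigma A$, hence corresponds after the shift to a subcomplex of $A$, which I also denote $D_A$.

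Now I apply strong approachability of $A$ to $D_A$, obtaining a subcomplex $E_A\subset A$ with $D_A\subset E_A$ and $E_A$ absolutely acyclic with respect to $\mathcal{U}$. \textbf{The main obstacle is that $D_B$ need not be a subcomplex of $B$}: the $B$-component of $d_C(b,a)$ for $(b,a)\in D$ is $d_Bb+ta$ with $a$ lying in (a shift of) $D_A$, so $d_B(D_B)$ is only contained in $D_B+t(D_A)$. To repair this I enlarge $D_B$ using the complex $E_A$ already produced, setting $\widetilde D_B^n:=D_B^n+t(E_A^n)\subset B^n$. Since $t(E_A)$ is itself a subcomplex of $B$ (the image of the chain map $t$ restricted to the subcomplex $E_A$) and $d_Bt=td_A$, one checks directly that $d_B(\widetilde D_B^n)\subset\widetilde D_B^{n+1}$, so $\widetilde D_B$ is a subcomplex of $B$; its terms lie in $\mathcal{U}$ because $t(E_A^n)\in\mathcal{U}$ is a quotient of $E_A^n$, hence $D_B^n\oplus t(E_A^n)\in\mathcal{U}$ by closure under extensions, and $\widetilde D_B^n$ is a quotient of this.

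Next I apply strong approachability of $B$ to $\widetilde D_B$, obtaining a subcomplex $E_B\subset B$ with $\widetilde D_B\subset E_B$ and $E_B$ absolutely acyclic with respect to $\mathcal{U}$. By construction $t(E_A)\subset\widetilde D_B\subset E_B$, so $t$ restricts to a morphism of complexes $t|_{E_A}\colon E_A\ra E_B$ in $\mathcal{U}$, and $E:=\Cone(t|_{E_A})$, with $E^n=E_B^n\oplus E_A^{n+1}$, is a subcomplex of $C=\Cone(t)$. It contains $D$, since for each $n$ we have $D^n\subset D_B^n\oplus D_A^n\subset E_B^n\oplus E_A^{n+1}=E^n$ and both $D$ and $E$ are subcomplexes of $C$. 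Finally $E$ is absolutely acyclic with respect to $\mathcal{U}$: the complexes $E_A,E_B$ lie in $\Ac^\abs(\mathcal{U})$, which is a triangulated (in particular thick) subcategory of $\K(\mathcal{U})$ and therefore closed under the cone of a closed morphism between two of its objects. This shows that $C$ is strongly approachable.

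Apart from the subcomplex issue flagged above — which is exactly what forces us to process $A$ first and then feed $E_A$ into the construction on the $B$-side, so that the hypotheses of the lemma are used asymmetrically — everything reduces to routine diagram chasing with the direct-sum decomposition $C^n=B^n\oplus A^{n+1}$ together with the closure of $\mathcal{U}$ under subobjects, quotients, and extensions.
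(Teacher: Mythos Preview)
Your proof is correct and follows essentially the same approach as the paper: project $D$ onto the $A$- and $B$-components, apply strong approachability of $A$ first to obtain $E_A$, enlarge the $B$-projection by $t(E_A)$ to get a genuine subcomplex, apply strong approachability of $B$ to obtain $E_B$, and take $E=\Cone(t|_{E_A})$. Your observation that $\pi\colon C\ra\Sigma A$ is a chain map, so that $D_A=\pi(D)$ is automatically a subcomplex, is a minor simplification over the paper, which instead forms $P^n=D_A^n+d(D_A^{n-1})$ (and likewise adds a $d(D_B^{n-1})$ term on the $B$-side) to force the subcomplex condition by hand.
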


  \begin{proof}
    The terms of the complex $C$ are $C^n=A^{n+1}\oplus B^n$.
    Let $D\subset C$ be a subcomplex with terms
    $D^n\in\mathcal{U}$.  Denote by $D_A^{n+1}\subset A^{n+1}$
    and $D_B^n\subset B^n$ the images of the projections of the
    subobject $D^n\subset C^n$ onto the two direct summands of
    $C^n$.  So we have $D^n\subset D^{n+1}_A\oplus D^n_B$ and
    $D_A^{n+1}$, $D_B^n\in\mathcal{U}$.

    Put $P^n=D_A^n+d(D_A^{n-1})\subset A^n$.  Then $P$ is a
    subcomplex of $A$ with terms $P^n\in\mathcal{U}$.  By
    assumption, there exists a subcomplex $E\subset A$ such that
    $E$ is absolutely acyclic with respect to $\mathcal{U}$
    and $P\subset E\subset A$.  In particular, we have
    $E^n\in\mathcal{U}$ for all $n\in\bZ$.  Put
    $Q^n=D_B^n+d(D_B^{n-1})+t(E^n)\subset B^n$.  Then $Q$ is a
    subcomplex of $B$ with terms $Q^n\in\mathcal{U}$.  By
    assumption, there exists a subcomplex $F\subset B$ such that
    $F$ is absolutely acyclic with respect to $\mathcal{U}$ and
    $Q\subset F\subset B$.

    By construction, we have $t(E)\subset F$; denote by
    $h\colon E\ra F$ the morphism induced by $t$.  Put
    $G=\Cone(h)$.  Then the complex $G$ is absolutely acyclic
    with respect to $\mathcal{U}$, and it is a subcomplex of $C$.
    Finally, we have $D_A^n\subset E^n\subset A^n$ and
    $D_B^n\subset F^n \subset B^n$, hence $D\subset G\subset C$,
    as desired.
  \end{proof}

  \begin{lemma}
    \label{l:homotopy}
    \begin{enumerate}
    \item If a complex $A$ is approachable and a complex $B$ is
      homotopy equivalent to $A$, then the complex $B$ is
      approachable as well.
    \item If a complex $A\in\K(\mathcal{A})$ is approachable and
      a complex $B$ is a direct summand of $A$ in
      $\K(\mathcal{A})$, then the complex $B$ is approachable as
      well.
    \end{enumerate}
  \end{lemma}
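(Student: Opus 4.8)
The plan is to observe that both assertions are purely formal and follow from the fact that the defining property of approachability is inherited by retracts in $\K(\mathcal{A})$; since a homotopy equivalence $A\simeq B$ is precisely an isomorphism $B\cong A$ in $\K(\mathcal{A})$, part (a) will be a special case of part (b). So I would prove (b) first and then deduce (a).

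For part (b), fix morphisms $\iota\colon B\ra A$ and $\pi\colon A\ra B$ in $\K(\mathcal{A})$ with $\pi\circ\iota=\id_B$. Let $f\colon U\ra A$ be an arbitrary morphism in $\K(\mathcal{A})$ with $U\in\K(\mathcal{U})$ — sorry, $f\colon U\ra B$ — so that $\iota\circ f\colon U\ra A$ is a morphism from a complex in $\mathcal{U}$ into the approachable complex $A$. By approachability of $A$, this composite factors in $\K(\mathcal{A})$ as $U\xra{a}E\xra{b}A$, where $E$ is absolutely acyclic with respect to $\mathcal{U}$. Then
\begin{equation*}
  f=\pi\circ\iota\circ f=(\pi\circ b)\circ a\colon U\xra{a}E\xra{\pi\circ b}B,
\end{equation*}
which exhibits $f$ as factoring through $E\in\Ac^\abs(\mathcal{U})$. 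As $f$ was arbitrary, $B$ is approachable. For part (a), a homotopy equivalence $A\simeq B$ provides mutually inverse isomorphisms in $\K(\mathcal{A})$, so in particular $B$ is a direct summand of $A$ in $\K(\mathcal{A})$ and part (b) applies; alternatively one repeats the displayed computation verbatim with $\iota,\pi$ replaced by the homotopy equivalence and its inverse.

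The only point that requires a modicum of care — and it is bookkeeping, not a genuine obstacle — is that ``factoring through a complex absolutely acyclic with respect to $\mathcal{U}$'' means a factorization in $\K(\mathcal{A})$ through the image of an object of $\Ac^\abs(\mathcal{U})$ under the functor $\K(\mathcal{U})\ra\K(\mathcal{A})$; once this is kept in mind, the morphism $\pi\circ b$ above is a legitimate morphism in $\K(\mathcal{A})$ and the factorization lives where it should. In essence the argument records that the class of morphisms out of $\K(\mathcal{U})$ which factor through $\Ac^\abs(\mathcal{U})$ is closed under post-composition by arbitrary morphisms of $\K(\mathcal{A})$, and that is exactly what makes approachability pass to retracts.
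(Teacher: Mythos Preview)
Your argument is correct and is exactly the unpacking of what the paper means by ``This follows immediately from the definitions'': approachability is a property of objects of $\K(\mathcal{A})$ that is visibly closed under post-composition, hence under retracts and in particular under isomorphism in $\K(\mathcal{A})$. Apart from the self-corrected slip (``$f\colon U\ra A$ --- sorry, $f\colon U\ra B$''), which you should clean up, there is nothing to add.
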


  \begin{proof}
    This follows immediately from the definitions.
  \end{proof}

  It remains to finish the proof of the theorem.  All the
  complexes produced in step 1 are strongly approachable by
  Lemma~\ref{l:totalizations}.  Hence all the complexes produced
  in steps 2 and 3 are strongly approachable by
  Lemma~\ref{l:cones}.  It follows that all complexes produced in
  steps 4 and 5 are approachable by Lemma~\ref{l:homotopy}.  Thus
  all complexes that are absolutely acyclic with respect to
  $\mathcal{A}$ are approachable.
\end{proof}

\begin{theorem}
  \label{t:serre-subcat-gdim-finite}
  Let $\mathcal{A}$ be an abelian category of finite global
  dimension with exact countable coproducts, and let
  $\mathcal{U}\subset\mathcal{A}$ be a non-empty, full
  subcategory closed under subobjects, quotients, and extensions.
  Assume that for any epimorphism $A\sra U$ in $\mathcal{A}$ with
  $U\in\mathcal{U}$ there exists a subobject $V\subset A$ with
  $V\in \mathcal{U}$ such that the composition $V\hra A\sra U$ is
  an epimorphism.  Then the obvious triangulated functor is an
  equivalence
  $\D(\mathcal{U}) \sira \D_{\mathcal{U}}(\mathcal{A})$.
\end{theorem}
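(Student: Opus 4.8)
The plan is to assemble the statement from results already established, following the three-step strategy sketched in the introduction: essential surjectivity will come from Proposition~\ref{p:essentially-surjective}, and full faithfulness will be transported along the canonical functors $\D^\abs(-)\to\D(-)$ using Theorem~\ref{t:absolute-derived-fully-faithful} together with Remark~\ref{r:gdim-finite-acyclic=absolute-acyclic}. The only thing that has to be checked before invoking these inputs is that the subcategory $\mathcal{U}$ itself has finite global dimension.

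\emph{Finite global dimension of $\mathcal{U}$.} Since $\mathcal{U}$ is closed under subobjects and quotients, it is closed under kernels and cokernels, hence abelian with exact inclusion $\mathcal{U}\hra\mathcal{A}$. The lifting hypothesis assumed here (along an epimorphism $A\sra U$, to a \emph{subobject} $V\subset A$ in $\mathcal{U}$) is stronger than the one in Proposition~\ref{p:D-minus-equivalence}, so that proposition yields an equivalence $\D^-(\mathcal{U})\sira\D^-_{\mathcal{U}}(\mathcal{A})$. Composing with the fully faithful inclusions $\D^-_{\mathcal{U}}(\mathcal{A})\hra\D^-(\mathcal{A})\hra\D(\mathcal{A})$ shows, for all $X,Y\in\mathcal{U}$ and all $i$, that the natural map $\Ext^i_{\mathcal{U}}(X,Y)\to\Ext^i_{\mathcal{A}}(X,Y)$ is an isomorphism. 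If $d$ denotes the (finite) global dimension of $\mathcal{A}$, then $\Ext^{d+1}_{\mathcal{U}}$ vanishes identically, so $\mathcal{U}$ has global dimension at most $d$.

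\emph{Essential surjectivity and full faithfulness.} Essential surjectivity of $\D(\mathcal{U})\to\D_{\mathcal{U}}(\mathcal{A})$ is now immediate from Proposition~\ref{p:essentially-surjective}: $\mathcal{A}$ has exact countable coproducts by hypothesis, $\mathcal{U}$ has finite global dimension by the previous step, and the epimorphism-lifting condition required there is a weakening of the one we assume. For full faithfulness, consider the square of triangulated functors
\[
  \xymatrix{
    {\D^\abs(\mathcal{U})} \ar[r] \ar[d] &
    {\D^\abs(\mathcal{A})} \ar[d] \\
    {\D(\mathcal{U})} \ar[r] &
    {\D(\mathcal{A})}
  }
\]
in which the verticals are the canonical Verdier quotient functors and all four arrows are induced on homotopy categories by the inclusion $\mathcal{U}\hra\mathcal{A}$; the square therefore commutes. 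The top arrow is fully faithful by Theorem~\ref{t:absolute-derived-fully-faithful}, whose hypotheses (closure of $\mathcal{U}$ under subobjects, quotients and extensions, and the subobject-lifting condition) are exactly the ones we assume. Since both $\mathcal{A}$ and $\mathcal{U}$ have finite global dimension, Remark~\ref{r:gdim-finite-acyclic=absolute-acyclic} makes the two vertical arrows equivalences. Hence the bottom arrow $\D(\mathcal{U})\to\D(\mathcal{A})$ is fully faithful; as recalled in Section~\ref{sec:notat-prel} it factors through $\D_{\mathcal{U}}(\mathcal{A})$, so $\D(\mathcal{U})\to\D_{\mathcal{U}}(\mathcal{A})$ is fully faithful, and combined with essential surjectivity it is an equivalence.

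There is no genuinely hard step here: all the substance has already been packed into Proposition~\ref{p:essentially-surjective} and Theorem~\ref{t:absolute-derived-fully-faithful}. The two points that merit attention are the verification that $\mathcal{U}$ inherits finite global dimension --- this is what licenses both the application of Proposition~\ref{p:essentially-surjective} to $\mathcal{U}$ and the use of Remark~\ref{r:gdim-finite-acyclic=absolute-acyclic} for $\mathcal{U}$ --- and the correct identification of all four functors in the commutative square, so that full faithfulness transfers from the top row to the bottom row.
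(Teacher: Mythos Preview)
Your proof is correct and follows essentially the same approach as the paper's own proof: first deduce that $\mathcal{U}$ has finite global dimension via Proposition~\ref{p:D-minus-equivalence} and the resulting identification of Ext groups, then obtain full faithfulness from the commutative square relating $\D^\abs$ and $\D$ using Theorem~\ref{t:absolute-derived-fully-faithful} and Remark~\ref{r:gdim-finite-acyclic=absolute-acyclic}, and finally invoke Proposition~\ref{p:essentially-surjective} for essential surjectivity.
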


\begin{proof}
  Proposition~\ref{p:D-minus-equivalence} shows that the functor
  $\D^-(\mathcal{U})\ra\D^-_{\mathcal{U}}(\mathcal{A})$ is an
  equivalence of triangulated categories.  In particular, the
  inclusion of abelian categories $\mathcal{U}\ra \mathcal{A}$
  induces isomorphisms on all Ext groups.  Therefore, the global
  dimension of $\mathcal{U}$ cannot exceed the global dimension
  of $\mathcal{A}$.

  Since the global dimensions of the abelian categories
  $\mathcal{U}$ and $\mathcal{A}$ are finite, both vertical
  functors in the obvious commutative diagram
  \begin{equation*}
    \xymatrix{
      {\D^\abs(\mathcal{U})} \ar[r] \ar[d]^-{\sim} & 
      {\D^\abs(\mathcal{A})} \ar[d]^-{\sim} \\
      {\D(\mathcal{U})} \ar[r] & 
      {\D(\mathcal{A})}
    }
  \end{equation*}
  of triangulated functors are equivalences by
  Remark~\ref{r:gdim-finite-acyclic=absolute-acyclic}.  The upper
  horizontal functor is fully faithful by
  Theorem~\ref{t:absolute-derived-fully-faithful}.  It follows
  that the lower horizontal functor is fully faithful.  Finally,
  the functor $\D(\mathcal{U})\ra \D_{\mathcal{U}}(\mathcal{A})$
  is essentially surjective by
  Proposition~\ref{p:essentially-surjective}.
\end{proof}

The
following
corollary concerns locally noetherian Grothendieck
abelian categories
(see~\cite[Section~II.4]{gabriel-categories-abeliennes}).

\begin{corollary}
  \label{c:locally-noetherian-category}
  Let $\mathcal{A}$ be a locally noetherian Grothendieck category
  of finite global dimension and let
  $\mathcal{U}\subset\mathcal{A}$ be the full subcategory of
  noetherian objects.  Then the obvious triangulated functor is
  an equivalence
  $\D(\mathcal{U}) \sira \D_{\mathcal U}(\mathcal{A})$.
\end{corollary}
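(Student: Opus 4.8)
The plan is to deduce this from Theorem~\ref{t:serre-subcat-gdim-finite} by checking its hypotheses for $\mathcal{A}$ a locally noetherian Grothendieck category and $\mathcal{U}$ its full subcategory of noetherian objects. By assumption $\mathcal{A}$ is abelian of finite global dimension, and being a Grothendieck category it satisfies axiom AB5, so filtered colimits --- in particular countable coproducts --- are exact; thus $\mathcal{A}$ has exact countable coproducts. The subcategory $\mathcal{U}$ is non-empty, since a locally noetherian category possesses a generating set of noetherian objects (and in any case $0\in\mathcal{U}$).

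Next I would verify that $\mathcal{U}$ is closed under subobjects, quotients, and extensions. Closure under subobjects and under quotients is immediate from the ascending chain condition characterizing noetherian objects. For extensions, given a short exact sequence $0\to U'\to U\to U''\to 0$ with $U',U''$ noetherian, one checks the ACC for $U$ by intersecting an ascending chain of subobjects of $U$ with $U'$ and pushing it forward to $U''$; both resulting chains stabilize, hence so does the original. (Alternatively one may just invoke the standard fact that noetherian objects form a Serre subcategory, cf.\ \cite[Section~II.4]{gabriel-categories-abeliennes}.)

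The remaining, and only slightly non-formal, hypothesis to check is the lifting condition: for every epimorphism $A\sra U$ in $\mathcal{A}$ with $U\in\mathcal{U}$ there should be a subobject $V\subset A$ with $V\in\mathcal{U}$ such that $V\hra A\sra U$ is still an epimorphism. Here I would use the structural fact that in a locally noetherian Grothendieck category every object is the directed union of its noetherian subobjects. Writing $A=\bigcup_i A_i$ as such a directed union with each $A_i$ noetherian, let $\ol{A_i}\subset U$ be the image of $A_i$ under $A\sra U$. Since the image of a directed union of subobjects is the directed union of the images (again by exactness of filtered colimits), the $\ol{A_i}$ form a directed family of subobjects of $U$ with $\bigcup_i\ol{A_i}=U$. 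As $U$ is noetherian this family stabilizes, so $\ol{A_i}=U$ for some $i$; taking $V:=A_i$ gives the desired noetherian subobject surjecting onto $U$.

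With all hypotheses of Theorem~\ref{t:serre-subcat-gdim-finite} verified, that theorem yields the equivalence $\D(\mathcal{U})\sira\D_{\mathcal{U}}(\mathcal{A})$. The main (and essentially only) point that is not purely formal bookkeeping is the reduction to the structure theory of locally noetherian categories --- that every object is a directed union of noetherian subobjects and that this union is compatible with taking images --- which rests on the AB5 axiom; everything else follows from the definitions.
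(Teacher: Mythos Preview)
Your proof is correct and follows exactly the same approach as the paper: both deduce the corollary from Theorem~\ref{t:serre-subcat-gdim-finite} by verifying its hypotheses for a locally noetherian Grothendieck category and its subcategory of noetherian objects. The paper's proof simply asserts that the assumptions are ``clearly'' satisfied, whereas you spell out the details (AB5 gives exact countable coproducts, noetherian objects form a Serre subcategory, and the lifting condition follows from writing $A$ as a directed union of noetherian subobjects together with the ACC on $U$); there is no substantive difference.
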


\begin{proof}
  This is a particular case of
  Theorem~\ref{t:serre-subcat-gdim-finite} because the categories
  $\mathcal{A}$ and $\mathcal{U}$ clearly satisfy the necessary
  assumptions.
\end{proof}

For a noetherian scheme $X$, we denote by $\Qcoh(X)$ the abelian
category of quasi-coherent sheaves on $X$ and by
$\coh(X)\subset \Qcoh(X)$ the full subcategory of coherent
sheaves.  The notation $\D_{\coh}(\Qcoh(X))$ stands for the full
triangulated subcategory of $\D(\Qcoh(X))$ consisting of all
complexes with coherent cohomology sheaves.

\begin{corollary}
  \label{c:regular-noetherian-finite-dim}
  Let $X$ be a regular noetherian scheme of finite Krull
  dimension.  Then the obvious triangulated functor is an
  equivalence
  \begin{equation*}
    \D(\coh(X)) \sira \ D_{\coh}(\Qcoh(X)).    
  \end{equation*}
\end{corollary}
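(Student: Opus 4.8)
The plan is to realize this as an instance of Corollary~\ref{c:locally-noetherian-category}. First I would check that $\Qcoh(X)$ is a locally noetherian Grothendieck category whenever $X$ is a noetherian scheme: it is a Grothendieck category for any scheme (\cite[\sptag{077P}]{stacks-project}), it has a generating set of coherent sheaves, and a noetherian scheme is in particular quasi-compact and quasi-separated, so filtered colimits of coherent subsheaves exhaust every quasi-coherent sheaf and coherent sheaves are precisely the noetherian objects. Hence the full subcategory $\mathcal{U}$ of noetherian objects of $\mathcal{A}=\Qcoh(X)$ is exactly $\coh(X)$, and $\D_{\mathcal{U}}(\mathcal{A}) = \D_{\coh}(\Qcoh(X))$ by definition.

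Next I would verify the finite global dimension hypothesis. Since $X$ is regular and noetherian, each local ring $\mathcal{O}_{X,x}$ is a regular local ring, hence of finite global dimension equal to $\dim \mathcal{O}_{X,x}$; since $X$ has finite Krull dimension, say $n$, all these local global dimensions are bounded by $n$. One then deduces that $\Ext^{i}_{\Qcoh(X)}(\mathcal{F},\mathcal{G})$ vanishes for $i>n$ and all $\mathcal{F},\mathcal{G}\in\Qcoh(X)$: the Ext groups on the regular scheme can be computed locally, and local Ext sheaves beyond degree $n$ vanish by the pointwise global dimension bound, while the local-to-global spectral sequence contributes nothing in high cohomological degree because $X$, being noetherian of finite Krull dimension, has bounded cohomological dimension for quasi-coherent sheaves (Grothendieck vanishing). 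Thus $\Qcoh(X)$ has finite global dimension.

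With both hypotheses in place, Corollary~\ref{c:locally-noetherian-category} applies verbatim to $\mathcal{A}=\Qcoh(X)$ and $\mathcal{U}=\coh(X)$, yielding the asserted equivalence $\D(\coh(X)) \sira \D_{\coh}(\Qcoh(X))$.

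The main obstacle is the finite-global-dimension computation for $\Qcoh(X)$: one must be careful that "regular of finite Krull dimension" genuinely forces a uniform bound on $\Ext$-degrees in the \emph{category} $\Qcoh(X)$, not merely stalkwise. The cleanest route is to invoke the local-to-global spectral sequence $H^p(X,\mathscr{E}xt^q(\mathcal{F},\mathcal{G})) \Rightarrow \Ext^{p+q}_{\Qcoh(X)}(\mathcal{F},\mathcal{G})$ together with (i) $\mathscr{E}xt^q = 0$ for $q > n$ by regularity of the stalks and (ii) $H^p(X,-) = 0$ on quasi-coherent sheaves for $p > n$ by Grothendieck's vanishing theorem on a noetherian space of dimension $n$; this bounds $\Ext^i$ by $i \le 2n$, which suffices. (A sharper bound is available but irrelevant here.) Everything else is bookkeeping about the structure of $\Qcoh(X)$ on a noetherian scheme.
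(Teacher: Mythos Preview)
Your overall strategy matches the paper's: both reduce to Corollary~\ref{c:locally-noetherian-category} after checking that $\Qcoh(X)$ is locally noetherian with noetherian objects $\coh(X)$ and has finite global dimension. The difference lies only in how you bound the global dimension.

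There is a small gap in your spectral-sequence argument. The identification $\mathscr{E}xt^q(\mathcal{F},\mathcal{G})_x \cong \Ext^q_{\mathcal{O}_{X,x}}(\mathcal{F}_x,\mathcal{G}_x)$, implicit in ``local Ext sheaves beyond degree $n$ vanish by the pointwise global dimension bound'', requires $\mathcal{F}$ to be coherent; for arbitrary quasi-coherent $\mathcal{F}$ it fails in general. The vanishing $\mathscr{E}xt^q(\mathcal{F},\mathcal{G}) = 0$ for $q > n$ is still true, but to see it you should restrict to an affine open $\Spec A$, observe that the regular noetherian ring $A$ has global dimension $\leq n$ (stalkwise regularity plus the fact that finitely generated modules detect global dimension over a noetherian ring), and then truncate an injective resolution of $\mathcal{G}|_{\Spec A}$ at length $n$. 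This affine step is exactly what the paper proves first; from there the paper proceeds more directly than you do: take an injective resolution $0 \to \mathcal{G} \to \mathcal{I}_0 \to \dots \to \mathcal{I}_{d-1} \to \mathcal{C} \to 0$ in $\Qcoh(X)$ and verify that $\mathcal{C}$ is injective by restricting to affine opens, using that injectivity in $\Qcoh(X)$ is a local property on noetherian schemes (via Hartshorne, \textit{Residues and Duality}). This yields the sharp bound $d$ and bypasses both the spectral sequence and Grothendieck vanishing. (Incidentally, for non-coherent $\mathcal{F}$ the sheaf $\mathscr{E}xt^q(\mathcal{F},\mathcal{G})$ need not be quasi-coherent, so your appeal to Grothendieck vanishing should be stated for arbitrary abelian sheaves --- which is fine, since that is how the theorem holds.)
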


\begin{proof}
  For any noetherian scheme $X$, the category $\Qcoh(X)$ is
  locally noetherian
  \cite[Th\'eor\`eme~VI.2.1]{gabriel-categories-abeliennes} and
  its subcategory of noetherian objects is $\coh(X)$.  When $X$
  is regular of Krull dimension~$d$, the global dimension of
  $\Qcoh(X)$ is equal to $d$.  This well-known result is provable
  as follows.

  Assume first that $X=\Spec R$ is affine. Let $M$ be a finitely
  generated $R$-module. Let
  $0 \ra Z \ra P_{d-1} \ra \dots \ra P_0 \sra M \ra 0$ be an exact
  sequence where all $P_i$ are finitely generated projective
  $R$-modules. If we localize this sequence at an arbitrary prime
  ideal $\mfp \in \Spec R$, the $R_\mfp$-module $Z_\mfp$ is
  projective because the regular noetherian local ring $R_\mfp$
  has global dimension equal to $\dim R_\mfp \leq \dim R=d$ by
  \cite[Theorem~19.2]{matsumura-comm-ring}.  Hence $Z$ is
  projective as an $R$-module. This shows that
  $\mod(R) \cong \coh(\Spec R)$ has global dimension $\leq d$.
  But this implies that $\Mod(R) \cong \Qcoh(\Spec R)$ has global
  dimension $\leq d$ and in fact equal to $d$ by
  \cite[Theorem~8.16, Theorem~8.55]{rotman}.

  Now assume that $X$ is not necessarily affine.  By
  \cite[Theorem~II.7.18]{hartshorne-residues-duality}, any
  $\mathcal{F} \in \Qcoh(X)$ admits an embedding
  $\mathcal{F} \subset \mathcal{I}$ where $\mathcal{I}$ is
  quasi-coherent and injective as an $\mathcal{O}_X$-module and
  hence in particular injective as an object of $\Qcoh(X)$. Since
  this embedding splits if $\mathcal{F}$ is injective in
  $\Qcoh(X)$, we obtain
  $\inj(\Qcoh(X))=\inj(\Mod(\mathcal{O}_X)) \cap \Qcoh(X)$.  From
  \cite[Lemma~II.7.16]{hartshorne-residues-duality} we see that
  an object of $\Qcoh(X)$ is injective if and only if its
  restrictions to an arbitrary open cover of $X$ are injective as
  quasi-coherent
  sheaves.

  Given $\mathcal{F} \in \Qcoh(X)$, let
  $0 \ra \mathcal{F} \ra \mathcal{I}_0 \ra \dots \ra
  \mathcal{I}_{d-1} \ra \mathcal{C} \ra 0$ be an exact sequence
  in $\Qcoh(X)$ where all $\mathcal{I}_i$ are injective. If we
  restrict this sequence to an arbitrary affine open subset $U$
  of $X$, the affine case treated above shows that
  $\mathcal{C}|_U$ is injective in $\Qcoh(U)$. Hence
  $\mathcal{C}$ is injective in $\Qcoh(X)$. This shows that the
  global dimension of $\Qcoh(X)$ is $\leq d$. Similarly as above,
  it must in fact be equal to $d$.
\end{proof}

\begin{remark}
  \label{r:gdim-locally-noetherian}
  Let $\mathcal{A}$ be a locally noetherian Grothendieck category
  and let $\mathcal{U}\subset\mathcal{A}$ be the full subcategory
  of noetherian objects.  Then the global dimensions of the
  abelian categories $\mathcal{U}$ and $\mathcal{A}$ coincide.
  Indeed, the argument of the first paragraph of the proof of
  Theorem~\ref{t:serre-subcat-gdim-finite} shows that the global
  dimension of $\mathcal{U}$ cannot exceed the global dimension
  of $\mathcal{A}$.  The converse inequality is provable using
  the next two lemmas.
\end{remark}

\begin{lemma}
  \label{l:loc-noetherian-baer}
  An object $J\in\mathcal{A}$ is injective if and only if, for
  every object $U\in\mathcal{U}$ and every subobject $V\hra U$,
  any morphism $V\to J$ can be extended to a morphism $U\to J$.
\end{lemma}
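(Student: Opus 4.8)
\textbf{The ``only if'' direction} holds by the very definition of an injective object: $J$ being injective means precisely that every morphism into $J$ from a subobject of an arbitrary object extends, and in particular this applies to subobjects of noetherian objects. So the substance is the \textbf{``if'' direction}, and the plan is to run the classical Baer criterion argument, using the structure of a locally noetherian Grothendieck category in place of the passage to cyclic submodules.

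Assume the stated extension property and let $A \hra B$ be an arbitrary monomorphism in $\mathcal{A}$ together with a morphism $f \colon A \to J$; the aim is to extend $f$ along this monomorphism. I would consider the poset of pairs $(A', f')$ consisting of an intermediate subobject $A \subseteq A' \subseteq B$ and a morphism $f' \colon A' \to J$ with $f'|_A = f$. Since $\mathcal{A}$ is a Grothendieck category, directed unions of subobjects of $B$ exist and filtered colimits are exact, so a chain $(A_i, f_i)$ has the pair $\bigl(\bigcup_i A_i, \colim_i f_i\bigr)$ as an upper bound; Zorn's lemma then produces a maximal element $(A', f')$, and the goal becomes to show $A' = B$.

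The key point, and the only place where the hypotheses on $\mathcal{A}$ really enter, is the passage from ``$A'$ is a proper subobject of $B$'' to a contradiction. Since $\mathcal{A}$ is locally noetherian, $B$ is the directed union of its noetherian subobjects; if all of these were contained in $A'$ we would get $B \subseteq A'$, so assuming $A' \subsetneq B$ there is a noetherian subobject $U \subseteq B$ with $U \not\subseteq A'$. Set $V := A' \cap U$, a subobject of the noetherian object $U$, and let $g_0 \colon V \to J$ be the restriction of $f'$; by hypothesis $g_0$ extends to some $g \colon U \to J$. Now $f'$ and $g$ agree on $V = A' \cap U$, so by the usual gluing of morphisms along a common subobject in an abelian category (via the exact sequence $0 \to A' \cap U \to A' \oplus U \to A' + U \to 0$, along which $(f', g)$ descends) they combine to a morphism $h \colon A' + U \to J$ extending $f'$. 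Since $U \not\subseteq A'$ we have $A' + U \supsetneq A'$, so $(A' + U, h)$ strictly dominates $(A', f')$, contradicting maximality. Hence $A' = B$ and $J$ is injective.

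I expect no real obstacle beyond two bookkeeping points: first, that in a Grothendieck category directed unions of subobjects are genuine subobjects and the glued map of a chain is well defined (this is AB5); second, that the combination of $f'$ and $g$ along $A' \cap U$ is legitimate, which it is precisely because both restrict to $g_0$ on the intersection. The whole argument is the abstract Baer criterion, and the noetherian hypothesis on $\mathcal{A}$ is exactly what lets the ``test objects'' $U$ be noetherian.
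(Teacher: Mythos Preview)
Your proof is correct and follows essentially the same approach as the paper's: both run the classical Baer-criterion Zorn's lemma argument, using local noetherianity to find a noetherian subobject $U$ not contained in the maximal extension and then gluing along $A' \cap U$. Your explicit mention of the exact sequence $0 \to A' \cap U \to A' \oplus U \to A' + U \to 0$ for the gluing step and the AB5 justification for chains is a bit more detailed than the paper's sketch, but the argument is the same.
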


\begin{proof}
  The non-trivial implication
  is a standard Zorn lemma argument very similar to the
  proof of the Baer criterion for injectivity of modules.
  Given an object $A\in\mathcal{A}$, a subobject $B\hra A$, and
  a morphism $B\to J$, one considers the poset of all morphisms
  $C\to J$ defined on intermediate subobjects $B\hra C\hra A$
  and making the triangular diagram $B\hra C\to J$ commutative.
  This poset clearly has unions of chains, and therefore it has
  a maximal element $C\to J$.  We claim that $C=A$. If not,
  there is a noetherian subobject $U\hra A$ that is not
  contained in $C$. The restriction of the morphism $C\to J$ to
  the subobject $V=U\cap C\hra C$ has by assumption an
  extension along the inclusion $V\hra U$ to a morphism
  $U \to J$.  The two morphisms $C\to J$ and $U\to J$ agree on
  the intersection $U\cap C$ and can therefore be uniquely
  extended to a morphism $C+U\to J$. Since $C \subsetneq C+U$
  we obtain a contradiction.
\end{proof}

\begin{lemma}
  \label{l:ext-filtered-colimit}
  For any object $U\in\mathcal{U}$ and every integer $n\ge0$,
  the functor $\Ext^n_{\mathcal{A}}(U,{-})$ takes filtered
  colimits in $\mathcal{A}$ to filtered colimits of abelian
  groups.
\end{lemma}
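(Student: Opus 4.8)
The plan is to prove this by induction on $n$ via dimension shifting, the one non-formal ingredient being the fact that a filtered system in $\mathcal{A}$ can be embedded into a filtered system of injective objects whose colimit is again injective --- this is precisely the point at which local noetherianity of $\mathcal{A}$ is used. By the usual reduction we may assume that the filtered index category $I$ is a directed poset; we fix a diagram $(Y_i)_{i\in I}$ in $\mathcal{A}$ and write $c^n\colon \colim_i \Ext^n_{\mathcal{A}}(U,Y_i)\ra \Ext^n_{\mathcal{A}}(U,\colim_i Y_i)$ for the canonical comparison map induced by the coprojections $Y_i\ra \colim_j Y_j$. For $n=0$: since $\mathcal{A}$ is locally noetherian, the noetherian object $U$ is finitely presented, so $\Hom_{\mathcal{A}}(U,-)$ commutes with filtered colimits; that is, $c^0$ is an isomorphism for every filtered system.

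For the inductive step I would first carry out the following construction. Regard $(Y_i)$ as an object $Y_\bullet$ of the Grothendieck category $\mathcal{A}^I$ of $I$-indexed diagrams in $\mathcal{A}$, choose a monomorphism $Y_\bullet\hra \mathcal{J}_\bullet$ into an injective object of $\mathcal{A}^I$ (which exists, as $\mathcal{A}^I$ is again a Grothendieck category), and let $0\ra Y_\bullet\ra \mathcal{J}_\bullet\ra Y'_\bullet\ra 0$ be the resulting short exact sequence in $\mathcal{A}^I$. Then: (i) each component $\mathcal{J}_i$ is injective in $\mathcal{A}$, because the evaluation functor $\mathrm{ev}_i\colon \mathcal{A}^I\ra \mathcal{A}$ has a left adjoint $i_{!}$ (a left Kan extension) which is exact --- its exactness uses only that coproducts are exact in $\mathcal{A}$ --- so that $\mathrm{ev}_i$ preserves injectives; and (ii) $\colim_i \mathcal{J}_i$ is injective in $\mathcal{A}$, by combining (i) with the classical fact that in a locally noetherian Grothendieck category every filtered colimit of injective objects is injective.

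Now apply $\Ext^\bullet_{\mathcal{A}}(U,-)$ componentwise to $0\ra Y_\bullet\ra \mathcal{J}_\bullet\ra Y'_\bullet\ra 0$. Since each $\mathcal{J}_i$ is injective, the long exact sequences degenerate into natural isomorphisms $\Ext^{n-1}_{\mathcal{A}}(U,Y'_i)\sira \Ext^n_{\mathcal{A}}(U,Y_i)$ for $n\geq 2$ and exact sequences $\Hom(U,\mathcal{J}_i)\ra \Hom(U,Y'_i)\ra \Ext^1(U,Y_i)\ra 0$. Applying $\Ext^\bullet_{\mathcal{A}}(U,-)$ componentwise and then the exact functor $\colim_I$ to the canonical morphism from the diagram $0\ra Y_\bullet\ra \mathcal{J}_\bullet\ra Y'_\bullet\ra 0$ to the constant diagram on $0\ra \colim_i Y_i\ra \colim_i \mathcal{J}_i\ra \colim_i Y'_i\ra 0$ (whose middle term is injective by (ii)) yields a morphism of long exact sequences whose vertical maps are exactly the comparison maps $c^\bullet$ for the three systems $(Y_i)$, $(\mathcal{J}_i)$, $(Y'_i)$. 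One then induces on $n$: the maps $c^0$ are isomorphisms by the base case, the five lemma forces $c^1$ for $(Y_i)$ to be an isomorphism, and for $n\geq 2$ the relevant square identifies $c^n$ for $(Y_i)$ with $c^{n-1}$ for $(Y'_i)$, which is an isomorphism by the inductive hypothesis. Hence $c^n$ is an isomorphism for all $n$, as required.

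The main obstacle is claim (ii): that forming the filtered colimit of the diagram of injectives preserves injectivity. This is exactly where the hypothesis that $\mathcal{A}$ be locally noetherian is essential, and I would invoke the standard characterization of locally noetherian Grothendieck categories here. The remaining ingredients --- that a noetherian object of a locally noetherian Grothendieck category is finitely presented, that $i_{!}$ and filtered colimits are exact, and the bookkeeping showing that the comparison maps $c^n$ fit together into a morphism of long exact sequences --- are routine, and I would dispose of them with brief remarks.
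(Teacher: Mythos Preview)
Your argument is correct. It is essentially the alternative proof the paper itself sketches: both use that an injective object of $\mathcal{A}^I$ has injective components (because evaluation $\mathrm{ev}_i$ has the exact left adjoint $i_!$) and that filtered colimits of injectives are injective in a locally noetherian Grothendieck category. The only difference is organizational: the paper builds a full $I$-indexed diagram of injective resolutions $A_i\to J_i^\bullet$ and computes all $\Ext^n$ simultaneously from $\colim_i J_i^\bullet$, whereas you embed once into an injective of $\mathcal{A}^I$ and dimension-shift by induction on $n$. The paper also offers a genuinely different primary argument: it computes $\Ext^n_{\mathcal{A}}(U,A)=\Hom_{\D(\mathcal{A})}(U,\Sigma^n A)$ via roofs $U\leftarrow P\to\Sigma^n A$ with $P$ a bounded-above complex in $\mathcal{U}$ (using Proposition~\ref{p:D-minus-equivalence}), and then uses that $\Hom_{\K(\mathcal{A})}(P,-)$ commutes with filtered colimits because each $\Hom_{\mathcal{A}}(P^m,-)$ does; this route avoids the diagram category $\mathcal{A}^I$ and the closure of injectives under filtered colimits altogether.
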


\begin{proof}
  Given an object $A\in\mathcal{A}$, the abelian group
  $\Ext^n_{\mathcal{A}}(U,A)=\Hom_{\D(\mathcal{A})}(U,\Sigma^n
  A)$ can be computed as the set of all equivalence classes of
  roofs $U\leftarrow P\rightarrow \Sigma^nA$, where $P\to U$ is
  a quasi-isomorphism of complexes in $\mathcal{A}$.  Clearly,
  it suffices to consider bounded above complexes $P$.
  Following the proof of
  Proposition~\ref{p:D-minus-equivalence}, we can assume that
  $P$ is a complex in $\mathcal{U}$.  Recall from \cite[II.4,
  Corollaire~1]{gabriel-categories-abeliennes} that, given any
  object $V \in \mathcal{U}$, the functor
  $\Hom_\mathcal{A}(V, -)$ takes filtered colimits in
  $\mathcal{A}$ to filtered colimits of abelian groups.
  If now $A=\colim_{i \in I} A_i$ is a
  filtered colimit in $\mathcal{A}$, it is straightforward
  to see
  that the natural map
  $\colim_{i \in I} \Hom_{\K(\mathcal{A})}(P,\Sigma^n A_i) \ra
  \Hom_{\K(\mathcal{A})}(P,\Sigma^n A)$ is bijective, for all
  complexes $P$ in $\mathcal{U}$; we deduce that 
  the canonical map
  $\colim_{i \in I} \Hom_{\D(\mathcal{A})}(U, \Sigma^n A_i)
  \ra \Hom_{\D(\mathcal{A})}(U, A)$ is bijective as well.
  
  Alternatively, one can devise an argument based on the fact
  that the class of injective objects in
  any
  locally noetherian
  Grothendieck
  category is closed under filtered colimits (see \cite[II.4,
  Corollaire~1]{gabriel-categories-abeliennes}).  Given a
  diagram $(A_i)_{i\in I}$ in $\mathcal{A}$ indexed by a
  directed poset $I$, one then needs to construct an
  $I$-indexed diagram of injective resolutions $A_i\to J_i$ of
  the objects $A_i$.  To this end, one can observe that
  functorial injective resolutions exist in any Grothendieck
  category (see e.\,g.\
  \cite[\sptag{079H}]{stacks-project}).  One can also consider
  the Grothendieck category $\mathcal{A}^I$ of $I$-indexed
  diagrams in $\mathcal{A}$ and notice that any injective
  object in $\mathcal{A}^I$ is a diagram of injective objects
  in $\mathcal{A}$, because the functor
  $\mathcal{A}^I\to\mathcal{A}$ taking a diagram
  $(K_i)_{i\in I}$ to the object $K_j$ for a fixed index
  $j\in I$ is right adjoint to an exact functor
  $\mathcal{A} \to\mathcal{A}^I$ and consequently takes
  injectives to injectives.  So any injective resolution of
  $(A_i)_{i\in I}$ in $\mathcal{A}^I$ provides the desired
  diagram of injective resolutions.

  Then $\colim_{i\in I} J_i$ is an injective resolution of the
  object $\colim_{i\in I} A_i$ in $\mathcal{A}$.  Computing
  $\Ext^n_{\mathcal{A}}(U,A_i)$ in terms of the injective
  resolution $J_i$ of the object $A_i$ and
  $\Ext^n_{\mathcal{A}}(U,\colim_{i\in I}A_i)$ in terms of
  the injective resolution $\colim_{i\in I} J_i$ of the object
  $\colim_{i\in I}A_i$, and using the fact that the functor
  $\Hom_{\mathcal{A}}(U,{-})$ takes filtered colimits in
  $\mathcal{A}$ to filtered colimits of abelian groups (see
  \cite[II.4, Corollaire~1]{gabriel-categories-abeliennes}),
  one deduces the same property for the functor
  $\Ext^n_{\mathcal{A}}(U,{-})$.
\end{proof}

Now we can finish the proof of the converse inequality.  Let
$d$ be the global dimension of $\mathcal{U}$.  If $d=\infty$,
then there is nothing to prove.  The case $d=-1$ is also
trivial.  So we assume that $d\ge0$ is a natural number.  Then
we have $\Ext_{\mathcal{A}}^{d+1}(U,W)=0$ for all $U$,
$W\in\mathcal{U}$.  Let $A \in \mathcal{A}$ be arbitrary.
Since $A$ is the filtered colimit of its noetherian subobjects,
Lemma~\ref{l:ext-filtered-colimit} shows that
$\Ext_{\mathcal{A}}^{d+1}(U,A)=0$ for all $U\in\mathcal{U}$.
Let $J$ be an injective resolution of $A$ in $\mathcal{A}$, and
let $Z^d$ be the kernel of the differential $J^d\to J^{d+1}$.
Then
$\Ext^1_{\mathcal{A}}(U,Z^d)=\Ext^{d+1}_{\mathcal{A}} (U,A)=0$
for all $U\in\mathcal{U}$.  By
Lemma~\ref{l:loc-noetherian-baer}, we conclude that the object
$Z^d\in\mathcal{A}$ is injective.  This implies that the global
dimension of $\mathcal{A}$ is $\leq d$.

\appendix

\section{The hereditary case}
\label{sec:hereditary-case}

The aim of this appendix is to give a full proof of
Corollary~\ref{c:equiv-hereditary}.
This is an easy consequence of
  Theorem~\ref{t:D-hereditary}, a structural result on the
  unbounded derived category of a hereditary abelian category.

If $A$ and $B$ are objects of an abelian category $\mathcal{A}$,
we may view them as complexes concentrated in degree zero and
then have a canonical identification
$\Hom_\mathcal{A}(A,B)=\Hom_{\D(\mathcal{A})}(A,B)$.
By definition, we have
$\Ext^n_\mathcal{A}(A,B)=\Hom_{\D(\mathcal{A})}(A,\Sigma^n B)$
for $n \in \bZ$. We remind the reader that an abelian category is
called \define{hereditary} if and only if
$\Ext^2_\mathcal{A}(A,B)=0$ for all objects $A$, $B$ of
$\mathcal{A}$.

\begin{theorem}
  [{cf.\ \cite[Section 1.6]{krause-chicago}}]
  \label{t:D-hereditary}
  Let $\mathcal{A}$ be a hereditary abelian category.  Then any
  object $M$ in $\D(\mathcal{A})$ is isomorphic to a complex with
  vanishing differentials whose $n$-th component is then
  necessarily isomorphic to
  $\HH^n(M)$.
  
  Let $X=((X^n), 0)$ and $Y=((Y^n),0)$ be complexes in
  $\mathcal{A}$ with vanishing differentials. Let
  $\iota_n \colon \Sigma^{-n}X^n \ra X$ and
  $\pi_n \colon Y \ra \Sigma^{-n}Y^n$ be the obvious inclusion
  and projection morphisms of complexes.  Then the map
  \begin{align}
    \label{eq:Hom-D-hereditary}
    \Hom_{\D(\mathcal{A})}(X,Y)
    & \sira
      \prod_{n \in \bZ} \Hom_\mathcal{A}(X^n, Y^n)
      \times
      \prod_{n \in \bZ} \Ext^1_\mathcal{A}(X^n, Y^{n-1}),\\
    \notag
    f & \mapsto \big((\Sigma^n(\pi_n \circ f \circ \iota_n))_{n \in
        \bZ}, 
        (\Sigma^n(\pi_{n-1} \circ f \circ \iota_n))_{n \in \bZ}\big),  
  \end{align}
  is bijective (note that
  $\Sigma^n(\pi_n \circ f \circ \iota_n)=\HH^n(f)$).

  In particular, any complex $Z=((Z^n), 0)$ in $\mathcal{A}$ with
  vanishing differentials is both coproduct and product in
  $\D(\mathcal{A})$ of the objects $\Sigma^{-n} Z^n$, for
  $n \in \bZ$, with obvious inclusion and projection morphisms.
  
  Altogether we have
  $\bigoplus_{n \in \bZ} \Sigma^{-n} \HH^n(M) \cong M \cong
  \prod_{n \in \bZ} \Sigma^{-n}\HH^n(M)$ for any object $M$ of
  $\D(\mathcal{A})$.
\end{theorem}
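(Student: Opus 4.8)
The plan is to reduce everything to the elementary fact that in a hereditary abelian category $\mathcal{A}$ one has $\Hom_{\D(\mathcal{A})}(\Sigma^iA,\Sigma^jB)=\Hom_{\mathcal{A}}(A,B)$, resp.\ $\Ext^1_{\mathcal{A}}(A,B)$, resp.\ $0$, according as $j-i$ equals $0$, resp.\ $1$, resp.\ anything else, and then to pass from bounded to unbounded complexes. First I would treat a \emph{bounded} complex $M$ by induction on the number of its nonzero cohomology objects: in the truncation triangle $\tau_{\le n-1}M\to\tau_{\le n}M\to\Sigma^{-n}\HH^n(M)\xra{\partial}\Sigma\tau_{\le n-1}M$ the class $\partial$ lies in $\Hom_{\D(\mathcal{A})}(\HH^n(M),\Sigma^{n+1}\tau_{\le n-1}M)$, and a d\'evissage along the finitely many cohomology objects of $\tau_{\le n-1}M$ together with the displayed vanishing ($\Ext^{\ge2}_{\mathcal{A}}=0$) shows this group is zero; hence $\tau_{\le n}M\cong\tau_{\le n-1}M\oplus\Sigma^{-n}\HH^n(M)$, and by induction $M\cong\bigoplus_n\Sigma^{-n}\HH^n(M)$. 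For bounded complexes $X=((X^n),0)$, $Y=((Y^n),0)$ with vanishing differentials, a short induction on the number of nonzero terms --- peeling off one term at a time and invoking the displayed formula --- (or, if one prefers, the collapse at $E_2$ of the hyper-$\Ext$ spectral sequence, which has only the columns $p=0,1$) yields a short exact sequence $0\to\prod_n\Ext^1_{\mathcal{A}}(X^n,Y^{n-1})\to\Hom_{\D(\mathcal{A})}(X,Y)\to\prod_n\Hom_{\mathcal{A}}(X^n,Y^n)\to0$. This splits canonically, because a chain map between complexes with zero differentials is just a family $(f_n\colon X^n\to Y^n)$ and admits no nonzero homotopies, so that $\prod_n\Hom_{\mathcal{A}}(X^n,Y^n)=\Hom_{\K(\mathcal{A})}(X,Y)\to\Hom_{\D(\mathcal{A})}(X,Y)$ is a section of the projection; tracing through the identifications recovers~\eqref{eq:Hom-D-hereditary} in the bounded case.

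The next step is the ``one-variable unbounded'' computation: for $A\in\mathcal{A}$ and an arbitrary complex $Y=((Y^n),0)$ with vanishing differentials I claim $\Hom_{\D(\mathcal{A})}(A,Y)\cong\Hom_{\mathcal{A}}(A,Y^0)\oplus\Ext^1_{\mathcal{A}}(A,Y^{-1})$, and dually for $\Hom_{\D(\mathcal{A})}(X,B)$ with $B\in\mathcal{A}$. Applying $\Hom_{\D(\mathcal{A})}(A,{-})$ to the triangle $\tau_{\le-1}Y\to Y\to\tau_{\ge0}Y$, the contribution of $\tau_{\ge0}Y$ collapses to $\Hom_{\mathcal{A}}(A,Y^0)$ by the orthogonality $\Hom_{\D(\mathcal{A})}(\D^{\le0},\D^{\ge1})=0$, and the contribution of $\tau_{\le-1}Y$ is evaluated by further peeling together with a Milnor $\lim^1$-sequence along the brutal subtruncations $\sigma_{\ge-k}(\tau_{\le-1}Y)$ --- these are degreewise split subcomplexes since the differentials vanish, and each of them is bounded, so the previous step computes the relevant $\Hom$-groups. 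This is exactly the place where heredity is used: only $\Hom_{\mathcal{A}}$ and $\Ext^1_{\mathcal{A}}$ survive, so the tower of $\Hom$-groups is eventually constant, $\lim^1$ vanishes, and one is left with $\Ext^1_{\mathcal{A}}(A,Y^{-1})$. The canonical splitting again comes from $\Hom_{\K(\mathcal{A})}(A,Y)=\Hom_{\mathcal{A}}(A,Y^0)$.

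Everything then assembles. Peeling both arguments as above promotes~\eqref{eq:Hom-D-hereditary} to arbitrary complexes with vanishing differentials. The universal properties exhibiting a zero-differential complex $Z=((Z^n),0)$ as the coproduct and as the product in $\D(\mathcal{A})$ of the objects $\Sigma^{-n}Z^n$ reduce, via~\eqref{eq:Hom-D-hereditary} and the one-variable formula applied to suitable shifts of the arguments, to identities between products of $\Hom_{\mathcal{A}}$- and $\Ext^1_{\mathcal{A}}$-groups that hold on the nose. Finally, to see that an arbitrary $M\in\D(\mathcal{A})$ is isomorphic to a complex with vanishing differentials, one writes $M$ as the filtered colimit of the degreewise split subcomplexes $\sigma_{\ge a}(\tau_{\le b}M)$, letting first $a\to-\infty$ and then $b\to\infty$; each is bounded, hence isomorphic in $\D(\mathcal{A})$ to $\bigoplus_{a\le n\le b}\Sigma^{-n}\HH^n(M)$ by the bounded case, and (after checking that these isomorphisms can be chosen compatibly) the relevant homotopy colimits and limits are represented by the zero-differential complex $((\HH^n(M)),0)$. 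The concluding decomposition $\bigoplus_n\Sigma^{-n}\HH^n(M)\cong M\cong\prod_n\Sigma^{-n}\HH^n(M)$ is then the conjunction of this with the coproduct/product statement.

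I expect the main obstacle to be precisely the passage to the unbounded case: the category $\mathcal{A}$ is not assumed to possess infinite products or coproducts, so the naive homotopy colimits and limits, and the bookkeeping of their $\lim^1$ and $\operatorname{colim}^1$ terms, are not a priori available. The point that rescues the argument is that every tower one is forced to build is eventually constant in each cohomological degree --- a consequence of heredity, since only $\Hom_{\mathcal{A}}$ and $\Ext^1_{\mathcal{A}}$ ever contribute --- so these (co)limits already exist at the level of complexes and represent the homotopy (co)limit, and the error terms vanish.
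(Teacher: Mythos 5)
Your bounded d\'evissage and the bounded computation of \eqref{eq:Hom-D-hereditary} are fine, but the entire content of the theorem is the unbounded case, and there your argument rests on homotopy (co)limits along brutal truncations and Milnor $\lim^1$-sequences. As you yourself observe, $\mathcal{A}$ is not assumed to have countable products or coproducts (in the intended applications, such as $\mathcal{U}=\coh(X)$ or a category of noetherian objects, it has neither), so those homotopy (co)limits and the Milnor sequences simply do not exist in $\D(\mathcal{A})$; and your proposed rescue --- ``the towers are degreewise eventually constant, so the chain-level (co)limits represent the homotopy (co)limits and the error terms vanish'' --- is not an argument but a restatement of exactly what has to be proved. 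Concretely, with $Y'=\sigma_{\le -1}Y$ your peeling step needs $\Hom_{\D(\mathcal{A})}(A,Y')\cong\varprojlim_k\Hom_{\D(\mathcal{A})}(A,\sigma_{\ge -k}Y')$, which (since $Y'$ splits off $\Sigma Y^{-1}$ degreewise) is equivalent to $\Hom_{\D(\mathcal{A})}(A,\sigma_{\le -2}Y)=0$. Degreewise eventual constancy of the tower of complexes does not give this: a morphism in $\D(\mathcal{A})$ is a roof $A\la P\ra \sigma_{\le -2}Y$ whose right leg has components in all degrees and factors through no bounded truncation, and one must use $\Ext^2_{\mathcal{A}}=0$ to modify the roof --- that argument is missing. (Note also that Milnor sequences have the wrong variance for maps \emph{into} a filtered union of subcomplexes; one would need compactness of $A$, likewise unavailable.) The same difficulty, compounded by the need to choose the non-canonical bounded splittings $\sigma_{\ge a}\tau_{\le b}M\cong\bigoplus_n\Sigma^{-n}\HH^n$ compatibly along the system, affects your final assembly of $M\cong((\HH^n(M)),0)$ as a double (co)limit; and for general $M$ the complexes $\sigma_{\ge a}(\tau_{\le b}M)$ are quotients, not subcomplexes, of $\tau_{\le b}M$.

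The paper's proof avoids truncation towers altogether, which is why it needs no (co)product or boundedness hypotheses. Heredity gives surjectivity of $\Ext^1_{\mathcal{A}}(\HH^n(M),M^{n-1})\sra\Ext^1_{\mathcal{A}}(\HH^n(M),\B^n(M))$, hence for every $n$ an extension $0\ra M^{n-1}\ra E^n\ra\HH^n(M)\ra0$ lifting $0\ra\B^n(M)\ra\Z^n(M)\ra\HH^n(M)\ra0$; the single auxiliary complex with terms $E^n\oplus M^n$ then admits explicit quasi-isomorphisms to both $M$ and the zero-differential complex, uniformly in all degrees, and bijectivity of \eqref{eq:Hom-D-hereditary} is proved by explicit roofs (for surjectivity) and an explicit null-homotopy (for injectivity). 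To salvage your reduction-to-bounded strategy you would have to supply a genuine proof of the interchange of $\Hom_{\D(\mathcal{A})}$ with your truncation (co)limits, which in practice amounts to carrying out such an explicit construction anyway; as written, the proposal has a real gap at precisely the unbounded passage.
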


\begin{proof}
  The first part of the proof is copied from \cite[Section
  1.6]{krause-chicago}. The second part, bijectivity of
  \eqref{eq:Hom-D-hereditary}, is treated rather quickly in
  loc.\,cit.; we therefore provide full details.
  
  Let $M$ be a complex in $\mathcal{A}$.  For $n \in \bZ$, the
  epimorphism $d'=d'^{n-1} \colon M^{n-1} \sra \B^n(M)$ induced
  by $d=d^{n-1} \colon M^{n-1} \ra M^n$ yields a surjection
  $\Ext^1_\mathcal{A}(\HH^n(M), M^{n-1}) \sra
  \Ext^1_\mathcal{A}(\HH^n(M), \B^n(M))$ because
  $\Ext^2_\mathcal{A}$ vanishes.  Hence the extension
  $0 \ra \B^n(M) \xra{i=i^n} \Z^n(M) \xra{p=p^n} \HH^n(M) \ra 0$
  comes from an extension
  $0 \ra M^{n-1} \xra{j=j^n} E^n \xra{q=q^n} \HH^n(M) \ra 0$,
  i.\,e.\ there is a commutative diagram
  \begin{equation*}
    \xymatrix{
      {0} \ar[r] & {M^{n-1}} \ar[r]^-{j} \ar[d]_-{d'}
      & {E^n} \ar[r]^-{q} \ar[d]_-{u'=u'^n} & {\HH^n(M)} \ar[r]
      \ar[d]_-{\id} 
      & {0} 
      \\ 
      {0} \ar[r] & {\B^n(M)} \ar[r]^-{i} & {\Z^n(M)} \ar[r]^-{p} &
      {\HH^n(M)}
      \ar[r] & 0 
    }
  \end{equation*}
  in $\mathcal{A}$ whose left square is cocartesian.  It is easy
  to check that the diagram
  \begin{equation}
    \label{eq:M-qiso-HM}
    \xymatrix{
      {\dots} \ar[r]^-{0} &
      {\HH^{n-1}(M)} \ar[r]^-{0} &
      {\HH^n(M)} \ar[r]^-{0} &
      {\HH^{n+1}(M)} \ar[r]^-{0} &
      {\dots}
      \\
      {\dots}
      \ar[r]^-{
        \left(
          \begin{smallmatrix}
            0 & j\\ 0 & 0
          \end{smallmatrix}
        \right)
      }
      &
      {E^{n-1} \oplus M^{n-1}}
      \ar[u]^-{(q,0)}
      \ar[r]^-{
        \left(
          \begin{smallmatrix}
            0 & j\\ 0 & 0
          \end{smallmatrix}
        \right)
      }
      \ar[d]_-{(u,\id)} &
      {E^n \oplus M^n}
      \ar[u]^-{(q,0)}
      \ar[r]^-{
        \left(
          \begin{smallmatrix}
            0 & j\\ 0 & 0
          \end{smallmatrix}
        \right)
      }
      \ar[d]_-{(u,\id)} &
      {E^{n+1} \oplus M^{n+1}}
      \ar[u]^-{(q,0)}
      \ar[r]^-{
        \left(
          \begin{smallmatrix}
            0 & j\\ 0 & 0
          \end{smallmatrix}
        \right)
      }
      \ar[d]_-{(u,\id)} &
      {\dots}
      \\
      {\dots} \ar[r]^-{d} &
      {M^{n-1}} \ar[r]^-{d} &
      {M^n} \ar[r]^-{d} &
      {M^{n+1}} \ar[r]^-{d} &
      {\dots}
    }
  \end{equation}
  in $\mathcal{A}$ is commutative where $u=u^n$ is the
  composition $E^n \xra{u'} \Z^n(M) \hra M^n$. The rows are
  complexes and the vertical arrows define quasi-isomorphisms
  between these complexes. This proves the first claim.

  We now prove surjectivity of \eqref{eq:Hom-D-hereditary}.
  Assume that morphisms $g_n \colon X^n \ra Y^n$ in $\mathcal{A}$
  and $e_n \colon X^n \ra \Sigma Y^{n-1}$ in $\D(\mathcal{A})$
  are given, for all $n \in \bZ$.  Represent $e_n$ by a short
  exact sequence
  $0 \ra Y^{n-1} \xra{k=k^n} F^n \xra{r=r^n} X^n \ra 0$ and
  consider the commutative diagram
  \begin{equation*}
    \xymatrix{
      {\dots} \ar[r]^-{0} &
      {X^{n-1}} \ar[r]^-{0} &
      {X^n} \ar[r]^-{0} &
      {X^{n+1}} \ar[r]^-{0} &
      {\dots}
      \\
      {\dots}
      \ar[r]^-{
        \left(
          \begin{smallmatrix}
            0 & k\\ 0 & 0
          \end{smallmatrix}
        \right)
      }
      &
      {F^{n-1} \oplus Y^{n-1}}
      \ar[u]^-{(r,0)}
      \ar[r]^-{
        \left(
          \begin{smallmatrix}
            0 & k\\ 0 & 0
          \end{smallmatrix}
        \right)
      }
      \ar[d]_-{(g \circ r,\id)} &
      {F^n \oplus Y^n}
      \ar[u]^-{(r,0)}
      \ar[r]^-{
        \left(
          \begin{smallmatrix}
            0 & k\\ 0 & 0
          \end{smallmatrix}
        \right)
      }
      \ar[d]_-{(g \circ r,\id)} &
      {F^{n+1} \oplus Y^{n+1}}
      \ar[u]^-{(r,0)}
      \ar[r]^-{
        \left(
          \begin{smallmatrix}
            0 & k\\ 0 & 0
          \end{smallmatrix}
        \right)
      }
      \ar[d]_-{(g \circ r,\id)} &
      {\dots}
      \\
      {\dots} \ar[r]^-{0} &
      {Y^{n-1}} \ar[r]^-{0} &
      {Y^n} \ar[r]^-{0} &
      {Y^{n+1}} \ar[r]^-{0} &
      {\dots}
    }
  \end{equation*}
  whose rows are complexes.  The vertical arrows define morphisms
  of complexes, and the upper morphism $(r,0)$ is a
  quasi-isomorphism.  We interprete this diagram as a roof which
  gives rise to a morphism $f \colon X \ra Y$ in
  $\D(\mathcal{A})$. This morphism $f$ is mapped to the element
  $((g_n), (e_n))$; to see this, use the subcomplex
  $\dots \ra 0 \ra Y^{n-1} \xra{k} F^n \ra 0 \ra \dots$ of the
  middle row together with its quasi-isomorphism to the subcomplex
  $\Sigma^{-n} X^n$ of the upper row.

  We now prepare for the proof of injectivity of
  \eqref{eq:Hom-D-hereditary}.  Let $f \colon X \ra Y$ be any
  morphism in $\D(\mathcal{A})$.  Represent $f$ by a roof
  $X \xla{a} M \xra{b} Y$ where $M$ is a suitable complex in
  $\mathcal{A}$, both $a$ and $b$ are morphisms of complexes and
  $a$ is a quasi-isomorphism. We deal with $M$ as explained in
  the first part of this proof.  By precomposing with the
  quasi-isomorphism $(u, \id)$ between middle and lower row in
  \eqref{eq:M-qiso-HM} we obtain the representation of $f$ by the
  roof
  \begin{equation}
    \label{eq:roof}
    \xymatrix{
      {\dots} \ar[r]^-{0} &
      {X^{n-1}} \ar[r]^-{0} &
      {X^n} \ar[r]^-{0} &
      {X^{n+1}} \ar[r]^-{0} &
      {\dots}
      \\
      {\dots}
      \ar[r]^-{
        \left(
          \begin{smallmatrix}
            0 & j\\ 0 & 0
          \end{smallmatrix}
        \right)
      }
      &
      {E^{n-1} \oplus M^{n-1}}
      \ar[u]^-{(a \circ u, a)}
      \ar[r]^-{
        \left(
          \begin{smallmatrix}
            0 & j\\ 0 & 0
          \end{smallmatrix}
        \right)
      }
      \ar[d]_-{(b \circ u,b)} &
      {E^n \oplus M^n}
      \ar[u]^-{(a \circ u, a)}
      \ar[r]^-{
        \left(
          \begin{smallmatrix}
            0 & j\\ 0 & 0
          \end{smallmatrix}
        \right)
      }
      \ar[d]_-{(b \circ u,b)} &
      {E^{n+1} \oplus M^{n+1}}
      \ar[u]^-{(a \circ u, a)}
      \ar[r]^-{
        \left(
          \begin{smallmatrix}
            0 & j\\ 0 & 0
          \end{smallmatrix}
        \right)
      }
      \ar[d]_-{(b \circ u,b)} &
      {\dots}
      \\
      {\dots} \ar[r]^-{0} &
      {Y^{n-1}} \ar[r]^-{0} &
      {Y^n} \ar[r]^-{0} &
      {Y^{n+1}} \ar[r]^-{0} &
      {\dots.}
    }
  \end{equation}
  Commutativity of the upper squares shows
  $a \circ u \circ j=a^n \circ u^n \circ j^n=0$ and hence
  $a \circ u=\ol{a} \circ q$ for a unique morphism
  $\ol{a}=\ol{a}^n \colon \HH^n(M) \ra X^n$. Since $a$ is a
  quasi-isomorphism, $\ol{a}=\ol{a}^n$ is an isomorphism.
  Commutativity of the lower squares shows that
  $b \circ u \circ j=b^n \circ u^n \circ j^n=0$ and hence
  $b \circ u=\ol{b} \circ q$ for a unique morphism
  $\ol{b}=\ol{b}^n \colon \HH^n(M) \ra Y^n$.

  Using the subcomplex
  $\dots \ra 0 \ra M^{n-1} \xra{j} E^n \ra 0 \ra \dots$ of the
  middle row of \eqref{eq:roof} together with its
  quasi-isomorphism to $\Sigma^{-n}X^n$ we first see that
  \begin{equation*}
    \Sigma^n(\pi_n \circ f \circ \iota_n)=
    \ol{b} \circ \ol{a}^{-1} \colon X^n
    \xsira{\ol{a}^{-1}}
    \HH^n(M) \xra{\ol{b}} Y^n.
  \end{equation*}
  Second we see that $\pi_{n-1} \circ f \circ \iota_n$ is given
  by the roof
  \begin{equation*}
    \xymatrix{
      {\dots} \ar[r]^-{0} &
      {0} \ar[r]^-{0} &
      {0} \ar[r]^-{0} &
      {X^n} \ar[r]^-{0} &
      {0} \ar[r]^-{0} &
      {\dots}
      \\
      {\dots}
      \ar[r]^-{0}
      &
      {0} \ar[r]^-{0} &
      {M^{n-1}}
      \ar[u]_-{0}
      \ar[r]^-{j}
      \ar[d]_-{b} &
      {E^n}
      \ar[u]^-{a \circ u=\ol{a}\circ q}
      \ar[r]^-{0}
      \ar[d]_-{0} &
      {0}
      \ar[u]_-{0}
      \ar[r]^-{0}
      \ar[d]^-{0} &
      {\dots}
      \\
      {\dots} \ar[r]^-{0} &
      {0} \ar[r]^-{0} &
      {Y^{n-1}} \ar[r]^-{0} &
      {0} \ar[r]^-{0} &
      {0} \ar[r]^-{0} &
      {\dots.}
    }
  \end{equation*}
  Consider the morphism of short exact sequences
  \begin{equation}
    \label{eq:extension}
    \xymatrix{
      {0} \ar[r]
      & {M^{n-1}} \ar[r]^-{j} \ar[d]_-{b}
      & {E^n} \ar[r]^-{q} \ar[d]_-{v}
      & {\HH^n(M)} \ar[r] \ar[d]_-{\id}
      & {0} 
      \\ 
      {0} \ar[r]
      & {Y^{n-1}} \ar[r]^-{k'=k'^n}
      & {F^n} \ar[r]^-{r'=r'^n}
      & {\HH^n(M)} \ar[r]
      & {0} 
    }
  \end{equation}
  where the left square is cocartesian. The lower row is a Yoneda
  extension representing
  $\Sigma^n(\pi_{n-1} \circ f \circ \iota_n)$ if we identify
  $\HH^n(M) \xsira{\ol{a}} X^n$.

  The morphism $(b \circ u, b)$ between the lower two rows in
  \eqref{eq:roof} factors as the composition of the following two
  morphisms of complexes (where the first morphism is a
  quasi-isomorphism).
  \begin{equation}
    \label{eq:EM-to-Y-simplified}
    \xymatrix{
      {\dots}
      \ar[r]^-{
        \left(
          \begin{smallmatrix}
            0 & j\\ 0 & 0
          \end{smallmatrix}
        \right)
      }
      &
      {E^{n-1} \oplus M^{n-1}}
      \ar[r]^-{
        \left(
          \begin{smallmatrix}
            0 & j\\ 0 & 0
          \end{smallmatrix}
        \right)
      }
      \ar[d]_-{
        \left(
          \begin{smallmatrix}
            v & 0\\ 0 & b
          \end{smallmatrix}
        \right)
      }
      &
      {E^n \oplus M^n}
      \ar[r]^-{
        \left(
          \begin{smallmatrix}
            0 & j\\ 0 & 0
          \end{smallmatrix}
        \right)
      }
      \ar[d]_-{
        \left(
          \begin{smallmatrix}
            v & 0\\ 0 & b
          \end{smallmatrix}
        \right)
      }
      &
      {E^{n+1} \oplus M^{n+1}}
      \ar[r]^-{
        \left(
          \begin{smallmatrix}
            0 & j\\ 0 & 0
          \end{smallmatrix}
        \right)
      }
      \ar[d]_-{
        \left(
          \begin{smallmatrix}
            v & 0\\ 0 & b
          \end{smallmatrix}
        \right)
      }
      &
      {\dots}
      \\
      {\dots}
      \ar[r]^-{
        \left(
          \begin{smallmatrix}
            0 & k'\\ 0 & 0
          \end{smallmatrix}
        \right)
      }
      &
      {F^{n-1} \oplus Y^{n-1}}
      \ar[r]^-{
        \left(
          \begin{smallmatrix}
            0 & k'\\ 0 & 0
          \end{smallmatrix}
        \right)
      }
      \ar[d]_-{(\ol{b} \circ r',\id)} &
      {F^n \oplus Y^n}
      \ar[r]^-{
        \left(
          \begin{smallmatrix}
            0 & k'\\ 0 & 0
          \end{smallmatrix}
        \right)
      }
      \ar[d]_-{(\ol{b} \circ r',\id)} &
      {F^{n+1} \oplus Y^{n+1}}
      \ar[r]^-{
        \left(
          \begin{smallmatrix}
            0 & k'\\ 0 & 0
          \end{smallmatrix}
        \right)
      }
      \ar[d]_-{(\ol{b} \circ r',\id)} &
      {\dots}
      \\
      {\dots} \ar[r]^-{0} &
      {Y^{n-1}} \ar[r]^-{0} &
      {Y^n} \ar[r]^-{0} &
      {Y^{n+1}} \ar[r]^-{0} &
      {\dots}
    }
  \end{equation}
  
  In order to show injectivity of \eqref{eq:Hom-D-hereditary}
  assume now that $f$ is mapped to zero.  First
  $0=\Sigma^n(\pi_n \circ f \circ \iota_n)=\ol{b} \circ
  \ol{a}^{-1}$ yields $\ol{b}=0$.  Hence $\ol{b} \circ r=0$,
  i.\,e.\ the lower vertical morphism $(\ol{b} \circ r', \id)$ in
  \eqref{eq:EM-to-Y-simplified} equals $(0, \id)$.
  
  Second $\Sigma^n(\pi_{n-1} \circ f \circ \iota_n)=0$ means that
  the extension in the lower row of \eqref{eq:extension} is
  trivial. Hence we can assume that $F^n=Y^{n-1} \oplus \HH^n(M)$
  and that $k'= \left(
    \begin{smallmatrix}
      \id \\ 0
    \end{smallmatrix}
  \right)$ and $r'= (0,\id)$.  Using this, the morphism
  $(\ol{b} \circ r', \id) =(0, \id)$ in
  \eqref{eq:EM-to-Y-simplified} has the form
  \begin{equation*}
    \xymatrix{
      {\dots}
      \ar[r]^-{
        \left(
          \begin{smallmatrix}
            0 & 0 & \id \\ 0 & 0 & 0\\ 0 & 0 & 0
          \end{smallmatrix}
        \right)
      }
      &
      {Y^{n-2} \oplus \HH^{n-1}(M)\oplus Y^{n-1}}
      \ar[r]^-{
        \left(
          \begin{smallmatrix}
            0 & 0 & \id \\ 0 & 0 & 0\\ 0 & 0 & 0
          \end{smallmatrix}
        \right)
      }
      \ar[d]_-{(0,0,\id)} &
      {Y^{n-1} \oplus \HH^n(M) \oplus Y^n}
      \ar[r]^-{
        \left(
          \begin{smallmatrix}
            0 & 0 & \id \\ 0 & 0 & 0\\ 0 & 0 & 0
          \end{smallmatrix}
        \right)
      }
      \ar[d]_-{(0,0,\id)} &
      {\dots}
      \\
      {\dots} \ar[r]^-{0} &
      {Y^{n-1}} \ar[r]^-{0} &
      {Y^n} \ar[r]^-{0} &
      {\dots.}
    }
  \end{equation*}
  But this morphism $(0,0,\id)$ is zero in the homotopy category
  $\K(\mathcal{A})$ (take the homotopy $(\id, 0,0)$).  This shows
  that the morphism $(\ol{b} \circ r', \id)$ vanishes in
  $\K(\mathcal{A})$. Hence $(b \circ u,b)$ vanishes in
  $\K(\mathcal{A})$ and $f$ vanishes in $\D(\mathcal{A})$.  This
  shows that the map \eqref{eq:Hom-D-hereditary} is
  bijective. From this the remaining claims follow immediately.
\end{proof}

\begin{corollary}
  \label{c:equiv-hereditary}
  Let $\mathcal{A}$ be a hereditary abelian category and let
  $\mathcal{U}\subset\mathcal{A}$ be a non-empty, full
  subcategory closed under kernels, cokernels, and extensions.
  Then the obvious functor is an equivalence
  $\D(\mathcal{U}) \sira \D_\mathcal{U}(\mathcal{A})$ of
  triangulated categories.
\end{corollary}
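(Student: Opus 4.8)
The plan is to deduce everything from Theorem~\ref{t:D-hereditary}, applied to both $\mathcal{A}$ and $\mathcal{U}$. The preliminary point, and the only one requiring a small argument, is that the abelian category $\mathcal{U}$ (abelian since closed under kernels and cokernels) is again hereditary. First I would note that the full exact inclusion $\mathcal{U}\ra\mathcal{A}$ induces natural isomorphisms $\Hom_{\mathcal{U}}(A,B)\sira\Hom_{\mathcal{A}}(A,B)$ and $\Ext^1_{\mathcal{U}}(A,B)\sira\Ext^1_{\mathcal{A}}(A,B)$ for $A,B\in\mathcal{U}$: the first holds because $\mathcal{U}$ is full, and for the second, surjectivity comes from closure under extensions (the middle term of a short exact sequence with outer terms in $\mathcal{U}$ lies in $\mathcal{U}$) and injectivity again from fullness. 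Then, to see that $\Ext^2_{\mathcal{U}}(A,C)=0$ for $A,C\in\mathcal{U}$, I would take a Yoneda $2$-extension $0\ra C\ra E_1\xra{g}E_0\ra A\ra0$ in $\mathcal{U}$, put $W:=\image(g)\in\mathcal{U}$, and observe that its class equals the image of the class of $0\ra W\ra E_0\ra A\ra0$ under the connecting map $\Ext^1_{\mathcal{U}}(A,W)\ra\Ext^2_{\mathcal{U}}(A,C)$ of the short exact sequence $0\ra C\ra E_1\ra W\ra0$. This class vanishes as soon as the displayed element of $\Ext^1_{\mathcal{U}}(A,W)$ lifts along $\Ext^1_{\mathcal{U}}(A,E_1)\ra\Ext^1_{\mathcal{U}}(A,W)$; but under the $\Ext^1$-isomorphisms this map is identified with $\Ext^1_{\mathcal{A}}(A,E_1)\ra\Ext^1_{\mathcal{A}}(A,W)$, which is surjective because $\mathcal{A}$ is hereditary and $E_1\sra W$ is an epimorphism. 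Hence $\mathcal{U}$ is hereditary and Theorem~\ref{t:D-hereditary} applies to it.

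Essential surjectivity of the functor $G\colon\D(\mathcal{U})\ra\D_{\mathcal{U}}(\mathcal{A})$ in question is then immediate: given $M\in\D_{\mathcal{U}}(\mathcal{A})$, Theorem~\ref{t:D-hereditary} for $\mathcal{A}$ gives an isomorphism $M\cong((\HH^n(M))_{n\in\bZ},0)$ in $\D(\mathcal{A})$ with a complex having vanishing differentials; since all $\HH^n(M)$ lie in $\mathcal{U}$, this complex is $G$ applied to the corresponding complex in $\mathcal{U}$, so $M$ is in the essential image.

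For full faithfulness I would take $X,Y\in\D(\mathcal{U})$ and, using Theorem~\ref{t:D-hereditary} for $\mathcal{U}$, reduce to the case $X=((X^n),0)$ and $Y=((Y^n),0)$ with vanishing differentials and $X^n,Y^n\in\mathcal{U}$. The inclusion and projection morphisms of complexes entering the definition of the isomorphism \eqref{eq:Hom-D-hereditary} are literally the same whether formed in $\mathcal{U}$ or in $\mathcal{A}$, so $G$ gives a commutative square whose top and bottom arrows are the bijections \eqref{eq:Hom-D-hereditary} for $\mathcal{U}$ and for $\mathcal{A}$ and whose right-hand arrow is
\begin{multline*}
  \prod_{n\in\bZ}\big(\Hom_{\mathcal{U}}(X^n,Y^n)\ra\Hom_{\mathcal{A}}(X^n,Y^n)\big)\\
  \times\prod_{n\in\bZ}\big(\Ext^1_{\mathcal{U}}(X^n,Y^{n-1})\ra\Ext^1_{\mathcal{A}}(X^n,Y^{n-1})\big).
\end{multline*}
By the first paragraph this map is bijective, hence so is $\Hom_{\D(\mathcal{U})}(X,Y)\ra\Hom_{\D(\mathcal{A})}(GX,GY)$. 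Combined with the previous paragraph, $G$ is an equivalence.

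The only genuine obstacle is the first step: one has to know that passing from $\mathcal{A}$ to an extension-closed abelian subcategory preserves hereditariness, and the proof of this is precisely where closure under extensions (to identify the $\Ext^1$-groups) and closure under kernels and cokernels (to form the waist object $W$ of the $2$-extension) get used. Everything after that is bookkeeping with the explicit description of $\Hom$-groups from Theorem~\ref{t:D-hereditary}; in particular, no hypotheses on coproducts or on projective objects are needed here, unlike in Theorem~\ref{t:equiv-abelian-subcat-gdim-finite}.
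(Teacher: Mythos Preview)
Your proof is correct and follows the same route as the paper: apply Theorem~\ref{t:D-hereditary} to both $\mathcal{A}$ and $\mathcal{U}$, deduce essential surjectivity from the first application and full faithfulness from the identification \eqref{eq:Hom-D-hereditary} together with the $\Hom$- and $\Ext^1$-isomorphisms induced by the inclusion. The paper's proof is terser and leaves implicit the verification that $\mathcal{U}$ is itself hereditary (needed to invoke Theorem~\ref{t:D-hereditary} for $\mathcal{U}$), which you supply carefully via the Yoneda/connecting-map argument; this is a genuine gap you have filled.
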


\begin{proof}
  This follows from Theorem~\ref{t:D-hereditary}. First, the
  essential image of the functor
  $\D(\mathcal{U}) \ra \D(\mathcal{A})$ is
  $\D_\mathcal{U}(\mathcal{A})$.  Second, fully faithfulness can
  be tested on objects $X$, $Y$ of $\D(\mathcal{U})$ with
  vanishing differentials, and for those the map
  $\Hom_{\D(\mathcal{U})}(X,Y) \ra \Hom_{\D(\mathcal{A})}(X,Y)$
  is identified with the obvious map
  \begin{equation*}
    \prod_{n \in \bZ} \Hom_\mathcal{U}(X^n, Y^n)
    \times
    \prod_{n \in \bZ} \Ext^1_\mathcal{U}(X^n, Y^{n-1})
    \ra
    \prod_{n \in \bZ} \Hom_\mathcal{A}(X^n, Y^n)
    \times
    \prod_{n \in \bZ} \Ext^1_\mathcal{A}(X^n, Y^{n-1})
  \end{equation*}
  which is an isomorphism by our assumption that $\mathcal{U}$ is
  a full subcategory of $\mathcal{A}$ that is closed under
  extensions.
\end{proof}


\end{document}